\documentclass{amsart}
%%%%%%%%%%%%%%%%%%%%%%%%%%%%%%%%%%%%%%%%%%%%%%%%%%%%%%%%%%%%%%%

\usepackage[all]{xy}
\usepackage{amssymb,latexsym}
\usepackage{mltex}
\usepackage{amsmath}
\usepackage[colorlinks=true,linkcolor=blue,citecolor=blue]{hyperref}
 \usepackage{lscape}
\usepackage{enumerate}
\usepackage{amsthm}
\usepackage{hyperref}
\usepackage{multicol}

%%%%%%%%%%%%%%%%%%%%%%%%%%%%%%%%%%%%%%%%%%%%%%%%%%%%%%%%%%%%%%%
%\theoremstyle{definition}
\newtheorem{thm}{Theorem}

\newtheorem{lem}{Lemma}[section]
\newtheorem{prop}[lem]{Proposition}
\newtheorem{rem}{Remark}

%%%%%%%%%%%%%%%%%%%%%%%%%%%%%%%%%%%%%%%%%%%%%%%%%%%%%%%%%%%%%%%

\newcommand{\C}{\mathbb{C}}

\newcommand{\HH}{\mathbb{H}}
\newcommand{\R}{\mathbb{R}}

\newcommand{\N}{\mathbb{N}}

%%%%%%%%%%%%%%%%%%%%%%%%%%%%%%%%%%%%%%%%%%%%%%%%%%%%%%%%%%%%%%%%%%%%%%%%
%%%%%%%%%%%%%%%%%%%%%%%%%%%%%%%%%%%%%%%%%%%%%%%%%%%%%%%%%%%%%%%%%%%%%%%%
\begin{document}
\title{Spinorial representation of submanifolds in $SL_n(\C)/SU(n)$}
\author{Pierre Bayard}
\email{bayard@ciencias.unam.mx}
\address{Facultad de Ciencias, Universidad Nacional Aut\'onoma de M\'exico
\\Av. Universidad 3000, Circuito Exterior S/N
\\Delegaci\'on Coyoac\'an, C.P. 04510, Ciudad Universitaria, CDMX, M\'exico}
\maketitle

\begin{abstract}
We give a spinorial representation of a submanifold of any dimension and co-dimension in a symmetric space $G/H,$ where $G$ is a complex semi-simple Lie group and $H$ is a compact real form of $G.$ This in particular includes $SL_n(\C)/SU(n),$ and extends the previously known spinorial representation of a surface in $\HH^3$ if $n=2.$ We also recover the Bryant representation of a surface with constant mean curvature 1 in $\HH^3$ and its generalization for a surface with holomorphic right Gauss map in $SL_n(\C)/SU(n).$ As a new application, we obtain a fundamental theorem for the submanifold theory in that spaces.
\end{abstract}
%%%%%%%%%%%%%%%%%%%%%%%%%%%%%%%%%%%%%%%%%%%%%%%%%%%%%%%%%%%%%%%%%%%%%%%%%%%%%%%%%%%
\noindent
{\it Keywords:} Spin geometry, isometric immersions, Weierstrass representation, symmetric spaces.\\\\
\noindent
{\it 2010 Mathematics Subject Classification:} 53C27, 53C35, 53C42.

\date{}
%%%%%%%%%%%%%%%%%%%%%%%%%%%%%%%%%%%%%%%%%%%%%%%%%%%%%%%%%%%%%%%%%%%%%%%%%%%%%%%%%%%
\maketitle\pagenumbering{arabic}
%%%%%%%%%%%%%%%%%%%%%%%%%%%%%%%%%%%%%%%%%%%%%%%%%%%%%%%%%%%%%%%%%%%%%%%%%%%%%%%%%%%
%\tableofcontents
\section{Introduction}
The Weierstrass representation formula permits to describe locally a minimal surface of $\R^3$ by means of two holomorphic functions. It is fundamental in the theory of minimal surfaces since it relates the theory to complex analysis and also allows the construction of many examples. This representation was extended for surfaces of arbitrary co-dimension, and also for surfaces with constant mean curvature in many other geometric contexts, for example for CMC-surfaces in $\R^3$ and $\R^4$ \cite{Ke,HO}, for surfaces with constant mean curvature 1 in $\HH^3$ \cite{Br} or recently for CMC-surfaces in 3-dimensional metric Lie groups \cite{MP}. There also exist other Weierstrass-type representation formulas for surfaces with constant Gauss curvature, for example for flat surfaces in $\mathbb{S}^3$ \cite{Bi}, in $\mathbb{H}^3$ \cite{GMM2}, or for flat surfaces with flat normal bundle in $\R^4$ \cite{dCD,DT,GM1}. These formulas also have their natural counterparts in other pseudo-riemannian space forms, as for maximal surfaces \cite{Kob}, timelike minimal surfaces \cite{Ma}, CMC-surfaces \cite{AN}, or surfaces with constant negative Gauss curvature \cite{GMM1} in 3-dimensional Minkowski space, for CMC-surfaces in de Sitter 3-space \cite{AA}, or for flat surfaces with flat normal bundle in $\R^{1,3}$ \cite{DT,GMM3} or $\R^{2,2}$ \cite{Pa}. All these representation formulas have strong analogies with the original Weierstrass formula. It seems in fact plausible that a general abstract formula do exist, giving rise to the various concrete formulas once the geometric context is specified. General representation formulas were indeed obtained for surfaces in $\R^3$ and $\R^4$ (see e.g. \cite{Ko1, Ko2, Ta1,Ta2,Fr} and the references therein), in $\mathbb{S}^3$ and $\HH^3$ \cite{Mo}, in 3-dimensional metric Lie groups \cite{BT}, in the Berger spheres \cite{LH1}, in 3-dimensional homogeneous spaces \cite{Ro}, in 4-dimensional space forms \cite{BLR1} and in some 3 and 4-dimensional Lorentzian space forms \cite{Va,La,LR,LH2,Bay,BP}; it appears that spinorial geometry provides the tools to write these general formulas in an efficient and elegant manner. We recently obtained with M.-A. Lawn, J. Roth and B. Zavala Jim\'enez general spinorial representation formulas for immersions in space forms \cite{BLR2} and in metric Lie groups \cite{BRZ}; the dimension and the co-dimension are arbitrary. 

The purpose of the paper is to give a general spinorial representation of a submanifold of any dimension and co-dimension in a symmetric space $G/H,$ where $G$ is a complex semi-simple Lie group and $H$ is a compact real form of $G$ (the group $G$ will be moreover assumed to be simply connected and the subgroup $H$ connected). This includes $SL_n(\C)/SU(n), n\geq 2$ and thus also $\HH^3=SL_2(\C)/SU(2)$ as special cases. In these spaces important Weierstrass-type formulas are known: the Bryant representation for surfaces with constant mean curvature 1 in $\HH^3,$ and its generalization by Kokubu, Takahashi, Umehara and Yamada for surfaces with holomorphic right Gauss map in $SL_n(\C)/SU(n),$ $n\geq 3$ \cite{KTUY}; we recover these representation formulas from the general abstract formula. Let us stress here that the spinor bundle used in the paper has a higher rank than the usual spinor bundle, as in the papers \cite{BLR2,BRZ}: it is associated to the left multiplication of the spin group on the Clifford algebra, rather than to the usual irreducible spinor representation; it is thus a sum of copies of the usual spinor bundle, whose number of factors increases exponentially with the dimension of $G/H.$ However, it is maybe not possible to obtain a general representation formula using bundles of lower dimension. 

As a first application of our general spinorial representation formula we obtain a fundamental theorem for the submanifold theory in $G/H$ (Theorem \ref{thm fundamental} in Section \ref{section fundamental equations}). We then recover as a special case the spinorial representation of a general surface in $\HH^3$ given by Morel in \cite{Mo} (Section \ref{section H3 morel}), and also the Bryant representation of a surface with constant mean curvature 1 in $\HH^3$ (Section \ref{section H3 CMC1}). We finally recover the generalized Bryant representation of a surface with holomorphic right Gauss map in $SL_n(\C)/SU(n)$ given by Kokubu, Takahashi, Umehara and Yamada. This gives a new spinorial interpretation of these representations  (see also \cite{GM2,MN1}), and also gives a relation between the abstract spinorial representation given by Morel and the Bryant representation.

It should be possible to adapt the basic constructions in the paper in order to obtain spinorial representations of submanifolds in other symmetric or homogeneous spaces. We hope to come back soon to that question. We thank a referee for suggesting us the use of $Spin^c$ structures instead of $Spin$ structures, which could be more natural in some contexts (see e.g. \cite{NR} and the references therein): this could be also a theme of future research.

The paper is organized as follows. Section \ref{section preliminaries} is devoted to preliminaries on the symmetric Lie algebra, the canonical connection and the Maurer-Cartan form on $G/H.$ We describe in Section \ref{section cartan embedding} the Cartan embedding of $G/H$ into the spin group $Spin(\mathfrak{g})$ and in Section \ref{section spinor bundle GH} the spinor bundle on $G/H.$ We construct the abstract spinor bundle and the various objects naturally defined on it in Section \ref{section abstract setting}. We state the main result in Section \ref{section main result} (Theorem \ref{main result})  and give its proof in Section \ref{section proof main result}. Section \ref{section fundamental equations} is devoted to the equations of Gauss, Ricci and Codazzi and their relations to the Killing type equation in Theorem \ref{main result}, and also to a fundamental theorem for the submanifold theory in $G/H.$ We then study the special case of $\mathbb{H}^3$ in Section \ref{section H3} and the case of $SL_n(\C)/SU(n)$ with $n\geq 3$ in Section \ref{section n geq 3}. Finally, an appendix on the representation of the skew-symmetric operators in the Clifford algebra and on the metric on $Cl(\mathfrak{g})$ ends the paper.

\section{Preliminaries on the homogeneous space $G/H$}\label{section preliminaries}
We consider a finite-dimensional complex semi-simple Lie group $G,$ and a compact real form $H$ of $G.$ Moreover $G$ is simply connected and $H$ is connected.
\subsection{The symmetric Lie algebra $(\mathfrak{g},\mathfrak{h},\sigma)$}
Let us denote by $\mathfrak{g}$ and $\mathfrak{h}$ the Lie algebras of $G$ and $H,$ and consider the decomposition
\begin{equation}\label{decomposition reductif} 
\mathfrak{g}=\mathfrak{h}\oplus \mathfrak{m}
\end{equation}
with $\mathfrak{m}=i\mathfrak{h}.$ The Lie bracket $[.,.]:\ \mathfrak{g}\times\mathfrak{g}\rightarrow\mathfrak{g}$ is $\C$-linear and satisfies
$$[\mathfrak{h},\mathfrak{h}]\subset \mathfrak{h},\hspace{.5cm} [\mathfrak{m},\mathfrak{m}]\subset \mathfrak{h},\hspace{.5cm} [\mathfrak{m},\mathfrak{h}]\subset\mathfrak{m}$$
since the complex Lie algebra $\mathfrak{g}$ is the complexification of the real Lie algebra $\mathfrak{h}.$ Let $Ad:\ G\rightarrow Aut(\mathfrak{g})$
be the adjoint map, and consider its differential at the identity $e$ of $G$
\begin{eqnarray}
ad:=dAd_e:\ \mathfrak{g}&\rightarrow& End(\mathfrak{g})\label{def ad lie algebra}\\
u&\mapsto&(X\in\mathfrak{g}\mapsto [u,X]\in\mathfrak{g}).\nonumber
\end{eqnarray}
Since $\mathfrak{g}$ is semi-simple, the form
\begin{eqnarray*}
B:\hspace{.3cm}\mathfrak{g}\times \mathfrak{g}&\rightarrow&\C\\
(X,Y)&\mapsto&\lambda\ tr(ad(X)\circ ad(Y)),
\end{eqnarray*}
$\lambda>0,$ is non-degenerate (it is a multiple of the Killing form); it is in fact negative definite on $\mathfrak{h}$ and positive definite on $\mathfrak{m}.$ Let us recall that
\begin{equation}\label{B Ad invariant}
B(Ad(g)(X),Ad(g)(Y))=B(X,Y)
\end{equation}
and
\begin{equation}\label{B ad invariant}
B(ad(u)(X),Y)+B(X,ad(u)(Y))=0
\end{equation}
for all $g\in G$ and $X,Y,u\in\mathfrak{g}.$ In particular, $ad(u):\mathfrak{g}\rightarrow\mathfrak{g}$ is $\C$-linear and skew-symmetric with respect to $B,$ and thus naturally identifies to an element of $\Lambda^2\mathfrak{g}.$ Setting 
\begin{eqnarray}\label{def symetrie}
\sigma:\hspace{1cm} \mathfrak{g}=\mathfrak{h}\oplus i\mathfrak{h}&\rightarrow&\mathfrak{g}=\mathfrak{h}\oplus i\mathfrak{h}\\
X+iY&\mapsto& X-iY,\nonumber
\end{eqnarray}
the triplet $(\mathfrak{g},\mathfrak{h},\sigma)$ is a \textit{symmetric Lie algebra}. It is in fact an orthogonal symmetric Lie algebra (since $B$ is negative definite on $\mathfrak{h}$), of non-compact type (since $B$ is positive definite on $\mathfrak{m}$); a classification of the irreducible orthogonal symmetric Lie algebras may be found in \cite{KN2}, Chapter XI, Theorem 8.5, and the case studied here corresponds to the fourth and last case in that classification.

\subsection{The canonical connection and the Maurer-Cartan form on $G/H$}
The canonical projection $\pi:\ G\rightarrow G/H$ is a principal bundle of structural group $H;$ if $\omega_G\in\Omega^1(G,\mathfrak{g})$ stands for the Maurer-Cartan form of $G$ and $p_1$ is the projection onto the first factor $\mathfrak{h}$ in (\ref{decomposition reductif}), we consider \textit{the canonical connection form}
\begin{equation}\label{connection alpha}
\alpha:=p_1\circ\omega_G\ \in\Omega^1(G,\mathfrak{h})
\end{equation}
on the principal bundle $G\rightarrow G/H.$ The bundle 
\begin{equation}\label{def TGH}
\displaystyle{T(G/H)=G\times_{Ad_{|H}}\mathfrak{m}}
\end{equation}
 may be regarded as a sub-bundle of the trivial bundle $G/H\times\mathfrak{g}$ using the map
\begin{eqnarray}
T(G/H)&\rightarrow&G/H\times\mathfrak{g}\label{map TGH subbundle}\\
\ [g,u]&\mapsto&(gH,Ad(g)(u)).\nonumber
\end{eqnarray}
This map is well defined since, for all $g\in G,$ $u\in\mathfrak{m}$ and $h\in H,$
$$Ad(gh)(Ad_{|H}(h^{-1})(u))=Ad(g)(u).$$
The Maurer-Cartan form on $G/H$ introduced in \cite{Burstall} is the 1-form $\beta\in\Omega^1(G/H,\mathfrak{g})$ given by
\begin{equation}\label{def beta}
\beta([g,u]):=Ad(g)(u)\ \in\mathfrak{g}
\end{equation}
for all $[g,u]\in T(G/H).$ For $g\in G,$ we set $\mathfrak{m}_g:=Ad(g)(\mathfrak{m})$ and $\mathfrak{h}_g:=Ad(g)(\mathfrak{h}),$ and consider the splitting $\mathfrak{g}=\mathfrak{m}_g\oplus \mathfrak{h}_g$ together with the corresponding projections $p_{\mathfrak{m}_g}:\mathfrak{g}\rightarrow \mathfrak{m}_g$ and $p_{\mathfrak{h}_g}:\mathfrak{g}\rightarrow \mathfrak{h}_g.$ We have by (\ref{map TGH subbundle})
\begin{equation}\label{trivial TGH}
T(G/H)\simeq \bigsqcup_{gH\ \in G/H}\mathfrak{m}_g\ \subset G/H\times\mathfrak{g}.
\end{equation}
It is proved in \cite{Burstall} that the canonical covariant derivative $\nabla^o$ on $T(G/H)$ is the projection of the usual derivative in $\mathfrak{g}$ onto the sub-bundle $T(G/H):$ if $X$ belongs to $T_{gH}(G/H)$ and $Y:G/H\rightarrow\mathfrak{g}$ is a section of $T(G/H)$ then
\begin{equation}\label{cov deriv T}
\nabla^o_XY=p_{\mathfrak{m}_g}\left(\partial_XY\right).
\end{equation}
Analogously, let us consider the bundle
\begin{equation}\label{def iTGH}
iT(G/H):=G\times_{Ad_{|H}}\mathfrak{h}.
\end{equation}
It may also be regarded as a sub-bundle of $G/H\times\mathfrak{g},$ using the map (\ref{map TGH subbundle}) extended by $\C$-linearity:
\begin{equation}\label{trivial iTGH}
iT(G/H)\simeq \bigsqcup_{gH\ \in G/H}\mathfrak{h}_g\ \subset G/H\times\mathfrak{g}.
\end{equation}
A section $Y$ of $iT(G/H)$ may thus be considered as a map $Y:G/H\rightarrow\mathfrak{g},$ and we may set, for $X$ belonging to $T_{gH}(G/H),$
\begin{equation}\label{cov deriv iT}
\nabla^o_XY=p_{\mathfrak{h}_g}\left(\partial_XY\right).
\end{equation}
Formulas (\ref{cov deriv T}) and (\ref{cov deriv iT}) define a covariant derivative $\nabla^o$ on 
\begin{equation}\label{trivial TCGH}
T^{\C}(G/H):=T(G/H)\oplus iT(G/H)\simeq G/H\times\mathfrak{g},
\end{equation}
which is $\C$-linear: $\nabla^o_X(iY)=i(\nabla^o_XY)$ for $X\in T^{\C}(G/H)$ and $Y\in \Gamma(T^{\C}(G/H)).$

\subsection{The adjoint map on $G/H$}\label{section def ad}
Let us consider the form $ad\circ\beta\in\Omega^1(G/H,\Lambda^2\mathfrak{g})$
where $\beta\in\Omega^1(G/H,\mathfrak{g})$ is the Maurer-Cartan form on $G/H$ defined in (\ref{def beta}) and $ad:\mathfrak{g}\rightarrow\Lambda^2\mathfrak{g}$ is the adjoint map defined in (\ref{def ad lie algebra}). For sake of simplicity, we will call that form the \emph{adjoint map on $G/H$} and we will still denote it by $ad.$  For $Y\in \Gamma(T(G/H)),$ and in view of  (\ref{trivial TCGH}), the map $ad(Y)=ad\circ\beta(Y):G/H\rightarrow\Lambda^2\mathfrak{g}$ may be regarded as a bundle map
$$ad(Y):\ T^{\C}(G/H)\rightarrow\ T^{\C}(G/H)$$
which is skew-symmetric with respect to the form $B.$ Since
\begin{equation}\label{bracket mg properties}
[\mathfrak{m}_g,\mathfrak{m}_g]\subset\mathfrak{h}_g\hspace{.5cm}\mbox{and}\hspace{.5cm}[\mathfrak{m}_g,\mathfrak{h}_g]\subset\mathfrak{m}_g
\end{equation}
for all $g\in G,$ it exchanges the bundles $T(G/H)$ and $iT(G/H)$ (see (\ref{trivial TGH}) and (\ref{trivial iTGH})). The adjoint map
$$ad\ \in \Gamma(T^*(G/H)\otimes\Lambda_{\C}^2(G/H))$$
may then be extended by $\C$-linearity to an object
$$ad\ \in \Gamma(T_{\C}^*(G/H)\otimes\Lambda_{\C}^2(G/H)).$$
Let us note that we may equivalently define $ad$ by the formula
\begin{eqnarray}
ad:\hspace{1cm} T^{\C}(G/H)=G\times_{Ad_{|H}}\mathfrak{g}&\rightarrow& \Lambda^2_{\C}(G/H)=G\times_{Ad_{|H}}\Lambda^2\mathfrak{g}\label{def equiv ad}\\
X=[g,u]&\mapsto& ad(X):=[g,ad(u)]\nonumber
\end{eqnarray}
where, for $u$ belonging to $\mathfrak{g},$ $ad(u)\in\Lambda^2\mathfrak{g}$ represents the endomorphism $v\in\mathfrak{g}\mapsto [u,v]\in\mathfrak{g}$ (which is the skew-symmetric with respect to $B$). This map is well defined since the map $u\in \mathfrak{g}\mapsto ad(u)\in\Lambda^2\mathfrak{g}$ is $Ad$-equivariant: the natural action of $Ad(h)$ on $\eta\in\Lambda^2\mathfrak{g}$ regarded as a map $\eta:\mathfrak{g}\rightarrow\mathfrak{g}$ is by conjugation,
\begin{equation}\label{def Ad conjug}
Ad(h)\cdot\eta=Ad(h)\circ \eta\circ Ad(h)^{-1},
\end{equation}
and we have, for all $h\in H$ and $u,v\in \mathfrak{g},$
\begin{equation}\label{pty bracket Ad inv}
Ad(h)([u,v])=[Ad(h)(u),Ad(h)(v)],
\end{equation}
which implies that
$$Ad(h)([u,Ad(h)^{-1}(v)])=[Ad(h)(u),v],$$
for all $h\in H$ and $u,v\in \mathfrak{g},$ or equivalently
$$Ad(h)\cdot ad(u)=ad(Ad(h)(u))$$
for all $h\in H$ and $u\in\mathfrak{g}.$ The following identity expresses that the operator $ad$ is parallel:
\begin{lem}\label{lemma nablao ad commute}
For all $X\in T^{\C}(G/H)$ and $Y\in\Gamma(T^{\C}(G/H)),$ we have
\begin{equation}\label{nablao ad commute}
\nabla_X^o\circ ad(Y)-ad(Y)\circ\nabla_X^o=ad(\nabla_X^oY)
\end{equation}
where $ad(Y)$ is considered as a bundle map $T^{\C}(G/H)\rightarrow T^{\C}(G/H).$
\end{lem}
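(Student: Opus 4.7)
The identity (\ref{nablao ad commute}) expresses that $ad$, viewed as a bundle morphism $T^{\C}(G/H)\to End(T^{\C}(G/H))$, is parallel with respect to $\nabla^o.$ My plan is a direct verification from the projection definitions (\ref{cov deriv T}) and (\ref{cov deriv iT}), combined with the bracket relations (\ref{bracket mg properties}) and the Leibniz rule for the usual derivative $\partial$ on $\mathfrak{g}$-valued functions. Using $\C$-linearity of both $\nabla^o$ and $ad$, it suffices to check the identity when $Y$ and $Z$ are each sections of either $T(G/H)$ or $iT(G/H)$, giving four analogous cases.

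I would write out the case $Y,Z\in\Gamma(T(G/H))$ in full and observe that the remaining cases are symmetric. Under the trivialization (\ref{trivial TCGH}), $ad(Y)(Z)$ is simply the $\mathfrak{g}$-valued function $gH\mapsto [Y(gH),Z(gH)]$; since $Y(gH),Z(gH)\in\mathfrak{m}_g$, the relation (\ref{bracket mg properties}) places this bracket in $\mathfrak{h}_g$, so $ad(Y)(Z)\in\Gamma(iT(G/H)).$ Formula (\ref{cov deriv iT}) together with Leibniz then yields
$$\nabla^o_X\bigl(ad(Y)(Z)\bigr)=p_{\mathfrak{h}_g}\bigl([\partial_X Y,Z]+[Y,\partial_X Z]\bigr).$$
Decomposing $\partial_X Y=\nabla^o_X Y+p_{\mathfrak{h}_g}(\partial_X Y)$, the term $[p_{\mathfrak{h}_g}(\partial_X Y),Z]$ lies in $[\mathfrak{h}_g,\mathfrak{m}_g]\subset\mathfrak{m}_g$ by (\ref{bracket mg properties}) and is killed by $p_{\mathfrak{h}_g}$; the same mechanism applies to $[Y,\partial_X Z].$ Recognising the resulting brackets as values of $ad$ recovers exactly (\ref{nablao ad commute}). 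The three remaining cases proceed identically, using in addition the inclusion $[\mathfrak{h}_g,\mathfrak{h}_g]\subset\mathfrak{h}_g$, which follows from (\ref{pty bracket Ad inv}) applied to $\mathfrak{h}_g=Ad(g)(\mathfrak{h}).$

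A conceptually cleaner route is also available: the map $\mathfrak{g}\to\Lambda^2\mathfrak{g},\ u\mapsto ad(u),$ is $Ad$-equivariant by (\ref{pty bracket Ad inv}), so it induces a parallel morphism between the associated bundles coming from the canonical connection $\alpha$ on the principal $H$-bundle $G\to G/H$; one then identifies $\nabla^o$ with the induced covariant derivative on $T^{\C}(G/H)=G\times_{Ad_{|H}}\mathfrak{g}.$ I anticipate no serious obstacle: the direct proof is pure bookkeeping over the four cases, and the conceptual proof is a one-line observation once $\nabla^o$ is recognised as an associated-bundle connection.
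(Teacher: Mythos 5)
Your proof is correct, and it takes a genuinely different but equally valid route from the paper's. The paper works in the associated-bundle description $T^{\C}(G/H)=G\times_{Ad_{|H}}\mathfrak{g}$, fixes a local section $s$ of $G\to G/H$ that is horizontal at the base point (so that $s^*\alpha(X)=0$ there), and observes that in this gauge $\nabla^o$ is literally the ordinary derivative of the component functions $\underline{Y},\underline{Z}:G/H\to\mathfrak{m}$; the identity (\ref{comp ad nabla XYZ}) then collapses to the Leibniz rule $\partial_X[\underline{Y},\underline{Z}]=[\partial_X\underline{Y},\underline{Z}]+[\underline{Y},\partial_X\underline{Z}]$ with nothing left over to discard. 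You instead work in the global embedding $T^{\C}(G/H)\simeq\bigsqcup_g\mathfrak{m}_g\oplus\mathfrak{h}_g\subset G/H\times\mathfrak{g}$, apply the Leibniz rule for $\partial_X$ to the pointwise bracket $[Y,Z]$, and then use (\ref{cov deriv T})--(\ref{cov deriv iT}) together with the inclusions (\ref{bracket mg properties}) to show that the unwanted $p_{\mathfrak{h}_g}(\partial_X Y)$ and $p_{\mathfrak{h}_g}(\partial_X Z)$ contributions bracket into $\mathfrak{m}_g$ and are annihilated by the final $p_{\mathfrak{h}_g}$. Both proofs reduce to Leibniz; yours carries a little more bookkeeping (identifying and killing the cross terms) in exchange for avoiding the choice of a horizontal gauge, while the paper's hides the bookkeeping inside the horizontal-section trick. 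The conceptual route you sketch at the end---an $Ad$-equivariant map $\mathfrak{g}\to\Lambda^2\mathfrak{g}$ induces a parallel morphism of associated bundles for the canonical connection---is precisely the idea the paper's horizontal-section argument is implementing in coordinates, so it is the cleanest way to phrase the common core of both proofs. One small remark: having invoked $\C$-linearity of $\nabla^o$ and $ad$, the reduction to $Y,Z\in\Gamma(T(G/H))$ already suffices; enumerating four cases is harmless but redundant.
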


\begin{proof}
It is sufficient to prove that, for $X\in T(G/H)$ and $Y,Z\in\Gamma(T(G/H)),$
\begin{equation}\label{comp ad nabla XYZ}
\nabla^o_X(ad(Y)(Z))-ad(Y)(\nabla^o_XZ)=ad(\nabla_X^oY)(Z).
\end{equation}
Recalling (\ref{def TGH}), we fix $x_o\in G/H,$ $X\in T_{x_o}G/H$ and a local section $s$ of $G\rightarrow G/H$ in the neighborhood of $x_o$ such that $\partial_X s(x_o)$ is horizontal with respect to the canonical connection $\nabla^o$. If $\underline{Y}:G/H\rightarrow \mathfrak{m}$ and $\underline{Z}:G/H\rightarrow  \mathfrak{m}$ are such that $Y=[s,\underline{Y}]$ and $Z=[s,\underline{Z}],$ (\ref{comp ad nabla XYZ}) reduces to
$$\partial_X(ad(\underline{Y})(\underline{Z}))-ad(\underline{Y})(\partial_X\underline{Z})=ad(\partial_X\underline{Y})(\underline{Z}),$$
which is the usual Leibniz property.
\end{proof}

\section{A model for $G/H$ into $Spin(\mathfrak{g})$}\label{section cartan embedding}
We introduce in this section a model of $G/H$ into the spin group $Spin(\mathfrak{g})$: it will be defined as the image of the composition of the Cartan embedding $c:G/H\rightarrow G$ with the natural lift $\widetilde{Ad}:G\rightarrow Spin(\mathfrak{g})$ of the adjoint map $Ad:G\rightarrow SO(\mathfrak{g}).$ This model will be important to the spinorial representation theorem: it will be used to give an explicit representation formula of the immersion in terms of the spinor field (formula (\ref{weierstrass}) in Theorem \ref{main result} below). Section \ref{section clifford spin} introduces notation which is important for the rest of the paper, while Sections \ref{section adjoint in spin} and \ref{section model adjoint} are more specific: they introduce in details the model of $G/H$ in $Spin(\mathfrak{g})$ (in two steps), and the reader may wish to skip them in a first reading (the details will really be needed only for the proof of Theorem \ref{main result}).

\subsection{The Clifford algebra and the spin group of $\mathfrak{g}$}\label{section clifford spin}
We consider the complex Clifford algebra constructed from the Lie algebra $\mathfrak{g}$ and the bilinear form $B$
$$Cl(\mathfrak{g}):=\oplus_{k\in\N}\ \mathfrak{g}^{\otimes k}/\mathcal{I}$$
where $\mathcal{I}$ is the ideal generated by elements of the form $u\otimes v+v\otimes u=-2B(u,v),$ and the corresponding spin group
$$Spin(\mathfrak{g}):=\{u_1\cdot u_2\cdots u_{2k}:\ u_i\in\mathfrak{g},\ B(u_i,u_i)=1\}\ \subset Cl(\mathfrak{g}).$$
We note that $Cl(\mathfrak{g})=Cl(\mathfrak{m})\otimes\C=Cl(\mathfrak{h})\otimes\C$ since $(\mathfrak{g},B)$ is both the complexification of $(\mathfrak{m},B_{|\mathfrak{m}})$ and of $(\mathfrak{h},B_{|\mathfrak{h}})$ (see Proposition 1.10 in \cite{BHMMM}). We also define
$$Spin(\mathfrak{m}):=\{u_1\cdot u_2\cdots u_{2k}:\ u_i\in\mathfrak{m},\ B(u_i,u_i)=1\}\ \subset Spin(\mathfrak{g}).$$
Note that there is a natural involution $\sigma$ on $Cl(\mathfrak{g})$ defined by
\begin{eqnarray}
\sigma:\hspace{1cm}Cl(\mathfrak{g})=Cl(\mathfrak{h})\otimes\C&\rightarrow& Cl(\mathfrak{g})=Cl(\mathfrak{h})\otimes\C\label{def sigma clifford}\\
u=\xi\otimes z&\mapsto&\sigma(u):=\xi\otimes \overline{z},\nonumber
\end{eqnarray}
which extends the symmetry $\mathfrak{g}\rightarrow\mathfrak{g}$ introduced in (\ref{def symetrie}).

\subsection{The adjoint group in $Spin(\mathfrak{g})$}\label{section adjoint in spin}
Let us denote by $SO(\mathfrak{g})$ the group of isomorphisms $\mathfrak{g}\rightarrow\mathfrak{g}$ which preserve $B$ and of determinant 1; there is a double covering $Spin(\mathfrak{g})\rightarrow SO(\mathfrak{g}).$ Since $G$ is simply connected, the adjoint map $Ad:G\rightarrow SO(\mathfrak{g})$ has a unique lift
\begin{equation}\label{def Ad tilde}
\widetilde{Ad}:\hspace{.3cm} G\rightarrow Spin(\mathfrak{g})
\end{equation}
which is also a morphism of groups. Its differential at the unit element of $G$ is
$$\widetilde{ad}=\frac{1}{2}ad:\hspace{.3cm}\mathfrak{g}\rightarrow \Lambda^2\mathfrak{g}\ \subset Cl(\mathfrak{g});$$
this is a Lie morphism, if we consider the Lie bracket on $Cl(\mathfrak{g})$ given by the commutator
\begin{equation}\label{bracket clg}
[\eta,\eta']=\eta\cdot\eta'-\eta'\cdot\eta
\end{equation}
for all $\eta,$ $\eta'\in Cl(\mathfrak{g});$ that is, 
\begin{equation}\label{tilde ad morphism}
\widetilde{ad}\ [X,Y]=[\widetilde{ad}(X),\widetilde{ad}(Y)]
\end{equation}
for all $X,Y\in\mathfrak{g},$ where the bracket in the right hand side is defined in (\ref{bracket clg}). It will be convenient to endow the group $Spin(\mathfrak{g})$ with the left invariant metric $B'$ such that
\begin{equation}\label{Bp ad B}
B'\left(\widetilde{ad}(X),\widetilde{ad}(Y)\right)=\frac{1}{4}B(X,Y)
\end{equation}
for all $X,Y\in\mathfrak{g}$; by Lemma \ref{lem2 ap2} in the appendix, this is the metric $B'=-2\lambda B$ where $B:Cl(\mathfrak{g})\times Cl(\mathfrak{g})\rightarrow\C$ is the natural extension of $B:\mathfrak{g}\times\mathfrak{g}\rightarrow\C$ to the Clifford algebra; in particular $B'$ is in fact defined on $Cl(\mathfrak{g})$ and is invariant by left and right multiplication by $Spin(\mathfrak{g})$ (Lemma \ref{lem1 ap2} in the appendix). Since $\widetilde{ad}$ is one-to-one ($G$ is semi-simple), the map (\ref{def Ad tilde}) is an isometric immersion 
$$\widetilde{Ad}:\hspace{.3cm} \left(G,\frac{1}{4}B\right)\rightarrow \left(Spin(\mathfrak{g}),B'\right).$$
Let us consider \emph{the adjoint group}
$$\widetilde{Ad}(G)=\{\widetilde{Ad}(g),\ g\in G\}\hspace{.3cm}\subset Spin(\mathfrak{g}).$$
We will also assume that $\widetilde{Ad}(G)$ is endowed with the metric $B'.$ 

\subsection{A model for $G/H$ into the adjoint group}\label{section model adjoint}
Let us first recall the Cartan embedding of $G/H$ into $G:$ if $\sigma:G\rightarrow G$ is the automorphism such that $d\sigma_e:\mathfrak{g}\rightarrow\mathfrak{g}$ is the involution (\ref{def symetrie}), the map
\begin{eqnarray*}
c:\ G/H&\rightarrow&G\\
gH&\mapsto&g\ \sigma(g^{-1})
\end{eqnarray*}
is an isometric embedding of $G/H$ into $G,$ if $G/H$ is equipped with the metric induced by $B$ and $G$ with the left invariant metric $\frac{1}{4}B$; moreover, $c(G/H)$ is a totally geodesic submanifold of $G;$ see  \cite{H} p. 276. We consider here the composition 
\begin{equation}\label{Adt circ c}
\widetilde{Ad}\circ c:\ G/H\rightarrow Spin(\mathfrak{g}).
\end{equation}
By construction, this is an isometric immersion ($G/H$ is endowed with the metric $B$ and $Spin(\mathfrak{g})$ with the metric $B'$). If $a$ belongs to $Spin(\mathfrak{g})$ we set $a^*:=\sigma(a^{-1})\in Spin(\mathfrak{g}),$ and consider
$$\mathcal{H}:=\{aa^*:\ a\in  \widetilde{Ad}(G)\}\subset \widetilde{Ad}(G).$$
Let us note that $\mathcal{H}$ is the image of $\widetilde{Ad}\circ c$, since, if $a=\widetilde{Ad}(g),$
$$aa^*=\widetilde{Ad}(g)\widetilde{Ad}(g)^*=\widetilde{Ad}(g)\widetilde{Ad}(\sigma(g^{-1}))=\widetilde{Ad}(g\ \sigma (g^{-1}))=\widetilde{Ad}\circ c\ (g)$$ 
($\sigma$ commutes with $\widetilde{Ad}$ since $\sigma(ad(X))=ad(\sigma(X))$ for all $X\in\mathfrak{g}$). Let us assume in general that (\ref{Adt circ c}) is an embedding, so that $\mathcal{H}$ may be regarded as a model of $G/H,$ and briefly show that this is indeed the case for $G=SL_n(\C)$ and $H=SU(n).$ In fact, 
\begin{equation}\label{Ad circ c}
Ad:c(G/H)\rightarrow SO(\mathfrak{g})
\end{equation} 
is an embedding in that case: let us first note that $\sigma(g)=(g^*)^{-1}$ where $g^*$ stands for the conjugate transpose of the matrix $g\in SL_n(\C)$ ($g$ belongs to $SU(n)$ if and only if $\det g=1$ and $(g^*)^{-1}=g$); the center of $SL_n(\C)$ is $\{\lambda I_n,\ \lambda^n=1\}$ and if $Ad(gg^*)=Ad(g'{g'}^*)$ then $gg^*=\lambda g'{g'}^*$ for $\lambda\in\C$ such that $\lambda^n=1;$ since the traces of $gg^*$ and  $g'{g'}^*$ are positive (the trace of $gg^*$ is $\sum_{ik}g_{ik}\overline{g_{ik}}>0$), $\lambda=1,$ which implies that (\ref{Ad circ c}) is one-to-one; moreover, (\ref{Ad circ c}) is proper: if $Ad(x_k)$ is a bounded sequence in $SO(\mathfrak{g})$ with $x_k\in c(G/H),$ then, writing $x_k=q_k\ d_k\ q_k^{-1}$ with $q_k\in SU(n)$ and $d_k=(\lambda_1,\ldots,\lambda_n)$ a diagonal matrix with real entries ($x_k$ is hermitian), the sequence $Ad(d_k)$ is bounded too, which implies that $d_k X d_k^{-1}$ is bounded for all given $X\in M_n(\C);$ this is turn implies that the quotients $\lambda_i/\lambda_j$ are bounded (taking for instance for $X$ the matrix with all the entries equal to 1), and since $\det(d_k)=\lambda_1\cdots\lambda_n=1$ that all the $\lambda_i$'s are bounded; $x_k$ is thus bounded too, and the result follows.

Note that in general the model $\mathcal{H}$ is a totally geodesic submanifold of $\widetilde{Ad}(G),$ since so is $c(G/H)$ in $G$ and $\widetilde{Ad}$ is an isometry. Note also that the isometry on $G/H$ given by the left multiplication by an element $b\in G$ corresponds to the transformation $aa^*\mapsto b(aa^*)b^*$ in $\mathcal{H}.$

Let us finally describe the canonical connection of $G/H$ in the model $\mathcal{H}.$ Let us consider the Lie algebra $\widetilde{ad}(\mathfrak{g})$ of $\widetilde{Ad}(G),$ and the natural trivialization
\begin{eqnarray}
T\widetilde{Ad}(G)&\rightarrow&\widetilde{Ad}(G)\times \widetilde{ad}(\mathfrak{g})\label{triv TAdt}\\
Z\in T_m\widetilde{Ad}(G)&\mapsto& (m,\underline{Z})\nonumber
\end{eqnarray}
where $\underline{Z}\in  \widetilde{ad}(\mathfrak{g})=T_1\widetilde{Ad}(G)$ is such that $Z=d(L_m)_1\underline{Z}$ ($L_m$ is the left multiplication by $m$ in $\widetilde{Ad}(G)$). Since $\widetilde{Ad}(G)$ is endowed with the left invariant metric $B',$ the Levi-Civita connection $\widetilde{\nabla}^o$ of $\widetilde{Ad}(G)$ is given in the trivialization (\ref{triv TAdt}) by
\begin{equation}\label{def nablao tilde}
\underline{\widetilde{\nabla}^o_XY}=\partial_X\underline{Y}+\frac{1}{2}[\underline{X},\underline{Y}]
\end{equation}
for all $X,Y\in \Gamma(T\widetilde{Ad}(G)),$ where the bracket is here the bracket defined in (\ref{bracket clg}); indeed, the Koszul formula for left invariant vector fields $X,Y,Z$ implies that
$$2B'(\widetilde{\nabla}^o_XY,Z)=B'([X,Y],Z)-B'([Y,Z],X)+B'([Z,X],Y),$$
which reduces to $B'([X,Y],Z)$ by the property (\ref{B Cl ad inv}) in Lemma \ref{lem1 ap2}. Since $\mathcal{H}$ is totally geodesic in $\widetilde{Ad}(G),$ $\widetilde{\nabla}^o$ is also the canonical connection of $G/H$ in this model.

\section{The spinor bundle on $G/H$}\label{section spinor bundle GH}
\subsection{Definition of the spinor and Clifford bundles}
Following \cite{Bar1}, a spin structure on the homogeneous space $G/H$ is a representation 
\begin{equation}\label{spin structure}
\widetilde{{Ad}_{|H}}:\hspace{.3cm}H\rightarrow Spin(\mathfrak{m})
\end{equation}
which is a lift of the isotropy representation $Ad_{|H}: H\rightarrow SO(\mathfrak{m}).$ We consider here $\widetilde{{Ad}_{|H}}:=\widetilde{Ad}_{|H}$ where $\widetilde{Ad}:G\rightarrow Spin(\mathfrak{g})$ is the morphism lifting the adjoint representation
\begin{equation}\label{adj repr G}
Ad:\hspace{.3cm}G\rightarrow SO(\mathfrak{g})
\end{equation}
(note that $\widetilde{Ad}_{|H}:H\rightarrow Spin(\mathfrak{g})$ takes values in $Spin(\mathfrak{m})$ since it is a lift of $Ad_{|H}: H\rightarrow SO(\mathfrak{m})$). We may then define spinor and Clifford bundles on $G/H:$ let us consider the representation  
\begin{equation}\label{def rho}
\rho:H\rightarrow Aut(Cl(\mathfrak{g}))
\end{equation}
given by the composition of $\widetilde{Ad}_{|H}$ with the representation of $Spin(\mathfrak{m})$ on $Cl(\mathfrak{g})$ given by multiplication on the left. We define
$$\Sigma:=G\times_{\rho}Cl(\mathfrak{g})$$
and
$$U\Sigma:=G\times_{\rho}\widetilde{Ad}(G)\ \subset\Sigma$$
($\widetilde{Ad}(G)$ is a subgroup of $Spin(\mathfrak{g})\subset Cl(\mathfrak{g})),$ together with 
$$Cl_{\Sigma}:=G\times_{Ad_{|H}} Cl(\mathfrak{g})$$
where $Ad: G\rightarrow Aut(Cl(\mathfrak{g}))$ is the natural extension of the adjoint representation (\ref{adj repr G}) to the Clifford algebra (it is well defined by (\ref{B Ad invariant})). They are bundles over $G/H.$ The bundle $\Sigma$ is very similar to the usual spinor bundle, with the difference that the representation (\ref{def rho}) is not irreducible in general. Note that it is defined using the entire Lie algebra $\mathfrak{g},$ instead of $\mathfrak{m}$ only: this will be the key to obtain a special global section in Section \ref{section global section}. The bundle $U\Sigma$ is a sub-bundle of $\Sigma$ and will be interpreted as \emph{the bundle of unit spinors} on $G/H:$ it will be important for the formulation of the spinorial representation theorem, since a normalization of the spinor fields is required to represent \emph{isometric} immersions. As in the usual construction in spin geometry, there is a Clifford action
\begin{eqnarray*}
Cl_{\Sigma}\times\Sigma&\rightarrow&\Sigma\\
(\eta,\varphi)&\mapsto&\eta\cdot\varphi.
\end{eqnarray*}
It is such that, if $\eta$ and $\varphi$ are respectively represented by $[\eta]$ and $[\varphi]\in Cl(\mathfrak{g})$ in some $g\in G,$ then $\eta\cdot\varphi$ is represented by $[\eta]\cdot[\varphi]$ in $g.$ This action is well defined since, for $[\eta],$ $[\varphi]\in Cl(\mathfrak{g})$ and $h\in H,$
\begin{eqnarray*}
Ad(\rho(h))([\eta])\cdot \rho(h)([\varphi])&=&\widetilde{Ad}(h)\cdot[\eta]\cdot\widetilde{Ad}(h)^{-1}\cdot\widetilde{Ad}(h)\cdot[\varphi]\\
&=&\widetilde{Ad}(h)\cdot[\eta]\cdot[\varphi]\\
&=&\rho(h)([\eta]\cdot[\varphi]).
\end{eqnarray*}

Let us note that the tangent bundle $T^{\C}(G/H)$ is naturally a sub-bundle of $Cl_{\Sigma},$ by using the map
\begin{eqnarray*}
T^{\C}(G/H)=G\times_{Ad_{|H}}\mathfrak{g}&\rightarrow&Cl_{\Sigma}=G\times_{Ad_{|H}} Cl(\mathfrak{g})\\
\ [g,u]&\mapsto&[g,j(u)]
\end{eqnarray*} 
where $j:\mathfrak{g}\rightarrow Cl(\mathfrak{g})$ is the natural inclusion. Similarly, $\Lambda^2_{\C}(G/H)$ is also a sub-bundle of $Cl_{\Sigma}$ by the natural map
\begin{equation}\label{Lambda2 Cl}
\Lambda_{\C}^2(G/H)=G\times_{Ad_{|H}}\Lambda^2\mathfrak{g}\ \ \subset\ Cl_{\Sigma}=G\times_{Ad_{|H}} Cl(\mathfrak{g}).
\end{equation}

Finally, the spinor bundle $\Sigma$ is endowed with the connection $\nabla^o$ associated to the canonical connection form $\alpha\in\Omega^1(G,\mathfrak{h})$ on $G\rightarrow G/H$ introduced in (\ref{connection alpha}).
\begin{rem} 
The adjoint map on $G/H$ may be regarded as a section of $T_{\C}^*(G/H)\otimes Cl_{\Sigma}$: by (\ref{Lambda2 Cl}), the adjoint map $ad\in \Gamma(T_{\C}^*(G/H)\otimes \Lambda^2_{\C}(G/H))$ defined in Section \ref{section def ad} naturally belongs to $\Gamma(T_{\C}^*(G/H)\otimes Cl_{\Sigma}).$ In particular, for all $Y\in T^{\C}(G/H),$ $ad(Y)$ belongs to $Cl_{\Sigma}$ and thus naturally acts on $\Sigma.$
\end{rem}
\subsection{The bilinear map $\langle\langle.,.\rangle\rangle:\Sigma\times\Sigma\rightarrow Cl(\mathfrak{g})$}
There is a natural bilinear map
\begin{eqnarray*}
\langle\langle.,.\rangle\rangle:\hspace{1cm} \Sigma\times\Sigma&\rightarrow& Cl(\mathfrak{g})\\
(\varphi,\psi)&\mapsto&\langle\langle\varphi,\psi\rangle\rangle:=\tau[\psi][\varphi]
\end{eqnarray*}
where $[\varphi]$ and $[\psi]\in Cl(\mathfrak{g})$ represent $\varphi$ and $\psi$ in some frame $g\in G,$ and $\tau:Cl(\mathfrak{g})\rightarrow Cl(\mathfrak{g})$ is the involution which reverses the order of the terms  
$$\tau(v_1\cdot v_2\cdots v_k)=v_k\cdots v_2\cdot v_1$$
for all $v_1,v_2,\ldots,v_k\in\mathfrak{g}.$ The map $\langle\langle.,.\rangle\rangle$ is well defined since
$$Spin(\mathfrak{m})\ \subset\{a\in Cl(\mathfrak{g}):\ \tau(a)a=1\}.$$ 
It satisfies the following properties:

\begin{lem}\label{lemma properties tau 1}
For all $\varphi,\psi\in\Gamma(\Sigma)$ and $X\in \Gamma(T(G/H)),$
\begin{equation}\label{scalar product property1}
\langle\langle \varphi,\psi\rangle\rangle=\tau\langle\langle\psi,\varphi\rangle\rangle
\end{equation}
and
\begin{equation}\label{scalar product property2}
\langle\langle X\cdot\varphi,\psi\rangle\rangle=\langle\langle\varphi,X\cdot \psi\rangle\rangle.
\end{equation}
\end{lem}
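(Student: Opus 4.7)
The plan is to prove both properties by unwinding the definition $\langle\langle\varphi,\psi\rangle\rangle=\tau[\psi][\varphi]$ and using the standard algebraic properties of the anti-involution $\tau$ on $Cl(\mathfrak{g})$. No geometric input is required beyond the definitions already laid out; the argument is essentially bookkeeping inside the Clifford algebra. Throughout, one works in a single fixed frame $g\in G$ so that the spinors $\varphi,\psi$ are represented by $[\varphi],[\psi]\in Cl(\mathfrak{g})$ and the vector $X\in T(G/H)$ is represented by $[X]\in\mathfrak{g}\subset Cl(\mathfrak{g})$.

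First, I will record the two facts about $\tau$ that drive everything. Since $\tau$ reverses products of generators in $\mathfrak{g}$, it is an anti-automorphism, i.e.\ $\tau(ab)=\tau(b)\tau(a)$ for all $a,b\in Cl(\mathfrak{g})$, and it is an involution, i.e.\ $\tau^2=\mathrm{id}$. Moreover, $\tau$ fixes elements of degree one: $\tau([X])=[X]$ for any $[X]\in\mathfrak{g}$.

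For property (\ref{scalar product property1}), a direct computation gives
\begin{equation*}
\tau\langle\langle\psi,\varphi\rangle\rangle=\tau\bigl(\tau[\varphi]\cdot [\psi]\bigr)=\tau[\psi]\cdot\tau(\tau[\varphi])=\tau[\psi]\cdot[\varphi]=\langle\langle\varphi,\psi\rangle\rangle,
\end{equation*}
where the first equality uses that $\tau$ is an anti-automorphism and the second that $\tau^2=\mathrm{id}$.

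For property (\ref{scalar product property2}), the Clifford action is described in a frame by left multiplication, so $X\cdot\varphi$ is represented by $[X]\cdot[\varphi]$ and $X\cdot\psi$ by $[X]\cdot[\psi]$. Then
\begin{equation*}
\langle\langle X\cdot\varphi,\psi\rangle\rangle=\tau[\psi]\cdot[X]\cdot[\varphi],
\end{equation*}
while
\begin{equation*}
\langle\langle\varphi,X\cdot\psi\rangle\rangle=\tau\bigl([X]\cdot[\psi]\bigr)\cdot[\varphi]=\tau[\psi]\cdot\tau[X]\cdot[\varphi]=\tau[\psi]\cdot[X]\cdot[\varphi],
\end{equation*}
using $\tau[X]=[X]$ in the last step. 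The two expressions agree, proving (\ref{scalar product property2}).

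There is no real obstacle here; the only thing one must pause to verify is frame-independence of the definition, which is precisely the inclusion $Spin(\mathfrak{m})\subset\{a\in Cl(\mathfrak{g}):\tau(a)a=1\}$ noted in the text: changing frame by $h\in H$ replaces $[\varphi],[\psi]$ by $a[\varphi],a[\psi]$ with $a=\widetilde{Ad}(h^{-1})\in Spin(\mathfrak{m})$, and $\tau(a[\psi])(a[\varphi])=\tau[\psi]\cdot\tau(a)a\cdot[\varphi]=\tau[\psi]\cdot[\varphi]$.
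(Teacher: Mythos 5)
Your proof is correct and follows essentially the same route as the paper's: both properties are obtained by unwinding the definition $\langle\langle\varphi,\psi\rangle\rangle=\tau[\psi][\varphi]$ and using that $\tau$ is an anti-automorphic involution fixing $\mathfrak{g}$. The extra remark on frame-independence is accurate but not needed for the lemma, since the map $\langle\langle\cdot,\cdot\rangle\rangle$ was already established to be well defined.
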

\begin{proof}
We have
$$\langle\langle \varphi,\psi\rangle\rangle=\tau[\psi]\ [\varphi]=\tau(\tau[\varphi]\ [\psi])=\tau\langle\langle\psi,\varphi\rangle\rangle$$
and
$$\langle\langle X\cdot\varphi,\psi\rangle\rangle=\tau[\psi]\ [X][\varphi]=\tau([X][\psi])[\varphi]=\langle\langle\varphi,X\cdot \psi\rangle\rangle$$
where $[\varphi],$ $[\psi]$ and $[X]\ \in Cl(\mathfrak{g})$ represent $\varphi,$ $\psi$ and $X$ in some given frame $g\in G.$
\end{proof}
\begin{lem}\label{lemma properties tau 2}
The connection $\nabla^o$ is compatible with the product $\langle\langle.,.\rangle\rangle:$
$$\partial_X\langle\langle\varphi,\varphi'\rangle\rangle=\langle\langle\nabla^o_X\varphi,\varphi'\rangle\rangle+\langle\langle\varphi,\nabla^o_X\varphi'\rangle\rangle$$
for all $\varphi,\varphi'\in\Gamma(\Sigma)$ and $X\in\Gamma(T(G/H)).$ 
\end{lem}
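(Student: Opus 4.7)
The plan is to reduce the identity to the ordinary Leibniz rule in $Cl(\mathfrak{g})$ by working in a local frame that is horizontal at the point of interest. Fix $x_o\in G/H$ and $X\in T_{x_o}(G/H)$. Choose a local section $s: G/H\to G$ of the principal bundle $\pi:G\to G/H$ near $x_o$ such that $\partial_X s(x_o)$ is horizontal with respect to the canonical connection form $\alpha=p_1\circ\omega_G$, i.e.\ $\alpha(\partial_X s(x_o))=0$. In this frame, write the two spinors as
$$\varphi=[s,[\varphi]],\qquad \varphi'=[s,[\varphi']],$$
where $[\varphi],[\varphi']:G/H\to Cl(\mathfrak{g})$ are the local representatives.

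The key observation is that the product $\langle\langle\varphi,\varphi'\rangle\rangle=\tau[\varphi']\,[\varphi]$ is \emph{frame independent}: under a change of frame $s\mapsto sh$ with $h\in H$, the representative of a spinor is multiplied on the left by $\widetilde{Ad}(h^{-1})\in Spin(\mathfrak{m})$, and since $\tau(a)a=1$ for every element $a\in Spin(\mathfrak{m})$, the scalar $\tau[\varphi']\,[\varphi]$ is invariant. Hence $\langle\langle\varphi,\varphi'\rangle\rangle$ is literally a $Cl(\mathfrak{g})$-valued function on $G/H$, and its derivative in the direction $X$ is just $\partial_X(\tau[\varphi']\,[\varphi])$ computed in any frame.

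Because $s$ is horizontal at $x_o$ in the direction $X$, the general formula for the covariant derivative in an associated bundle reduces at $x_o$ to
$$\nabla^o_X\varphi=[s,\partial_X[\varphi]],\qquad \nabla^o_X\varphi'=[s,\partial_X[\varphi']].$$
Now apply the Leibniz rule for $\partial_X$ on the Clifford-algebra-valued product, using that $\tau$ is a fixed linear involution of $Cl(\mathfrak{g})$ and therefore commutes with $\partial_X$:
$$\partial_X\bigl(\tau[\varphi']\,[\varphi]\bigr)=\tau(\partial_X[\varphi'])\,[\varphi]+\tau[\varphi']\,\partial_X[\varphi].$$
The first term on the right is $\langle\langle\varphi,\nabla^o_X\varphi'\rangle\rangle$ and the second is $\langle\langle\nabla^o_X\varphi,\varphi'\rangle\rangle$, which is the desired identity at $x_o$. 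Since $x_o$ is arbitrary, the identity holds on all of $G/H$.

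There is really no hard step: the only subtlety is verifying that $\langle\langle.,.\rangle\rangle$ is well defined independently of the frame (which uses exactly the defining property $Spin(\mathfrak{m})\subset\{a:\tau(a)a=1\}$ already quoted before the lemma), after which the horizontal-frame trick converts the statement into the elementary Leibniz rule.
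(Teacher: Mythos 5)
Your proof is correct, and it is a mild variant of the paper's own argument. The paper proves the lemma by writing the covariant derivative in an arbitrary local frame, so that each term picks up the connection contribution $\rho_*(s^*\alpha(X))\in\Lambda^2\mathfrak{g}$, and then observes that the two connection terms cancel because $\tau(\omega)=-\omega$ for $\omega\in\Lambda^2\mathfrak{g}$. You instead choose the frame so that $s^*\alpha(X)=0$ at the point, making the covariant derivative coincide there with the ordinary derivative of the representative, and you justify this reduction by first noting that $\langle\langle\varphi,\varphi'\rangle\rangle$ is frame-independent (using $\tau(a)a=1$ on $Spin(\mathfrak{m})$, the very condition the paper invokes for well-definedness). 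The two arguments rely on the same algebraic facts; yours trades the infinitesimal cancellation $\tau(\omega)=-\omega$ for the integrated statement that $\langle\langle.,.\rangle\rangle$ descends to $G/H$, which is a slightly more conceptual packaging and avoids writing out the connection term. One small remark: the horizontal-frame trick at a point in a given direction $X$ is exactly the device the paper itself uses in the proof of Lemma \ref{lemma nablao ad commute}, so your method is entirely consonant with the paper's toolkit; you could even choose $s$ horizontal at $x_o$ in all directions simultaneously, which costs nothing and makes the computation uniform in $X$.
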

\begin{proof}
If $\varphi=[s,[\varphi]]$ is a section of $\Sigma=G\times_{\rho} Cl(\mathfrak{g}),$ we have
\begin{equation}\label{nabla phi rho}
\nabla^o_X\varphi=\left[s,\partial_X[\varphi]+\rho_*(s^*\alpha(X))([\varphi])\right],\hspace{1cm}\forall X\in\ T(G/H),
\end{equation}
where $\alpha\in\Omega^1(G,\mathfrak{h})$ is the canonical connection form on $G\rightarrow G/H;$ the term $\rho_*(s^*\alpha(X))$ is an endomorphism of $Cl(\mathfrak{g})$ given by the multiplication on the left by an element belonging to $\Lambda^2\mathfrak{g}\subset Cl(\mathfrak{g}),$ still denoted by  $\rho_*(s^*\alpha(X)).$ Such an element satisfies
$$\tau\left( \rho_*(s^*\alpha(X))\right)=-\rho_*(s^*\alpha(X)),$$
and we have
\begin{eqnarray*}
\langle\langle\nabla^o_X\varphi,\varphi'\rangle\rangle+\langle\langle\varphi,\nabla^o_X\varphi'\rangle\rangle&=&\tau\{[\varphi']\}\left(\partial_X[\varphi]+\rho_*(s^*\alpha(X))[\varphi]\right)\\
&&+\tau\left\{\partial_X[\varphi']+\rho_*(s^*\alpha(X))[\varphi']\right\}[\varphi]\\
&=&\tau\{[\varphi']\}\partial_X[\varphi]+\tau\left\{\partial_X[\varphi']\right\}[\varphi]\\
&=&\partial_X\langle\langle\varphi,\varphi'\rangle\rangle.
\end{eqnarray*}
\end{proof}
We finally note that there is a natural action of $\widetilde{Ad}(G)$ on $U\Sigma,$ by right multiplication: for $\varphi=[g,[\varphi]]\in U\Sigma=G\times_{\rho}\widetilde{Ad}(G)$ and $a\in \widetilde{Ad}(G)$ we set
\begin{equation}\label{def right action}
\varphi\cdot a:=[g,[\varphi]\cdot a]\ \in U\Sigma.
\end{equation}
More generally, $Cl(\mathfrak{g})$ naturally acts on $\Sigma$ on the right.
\subsection{The involution $\sigma:\Sigma\rightarrow\Sigma$}
There is a natural involution
\begin{eqnarray*}
\sigma:\hspace{1cm}\Sigma=G\times_{\rho} Cl(\mathfrak{g})&\rightarrow&\Sigma=G\times_{\rho} Cl(\mathfrak{g})\\
\varphi=[g,u]&\mapsto&\sigma(\varphi):= [g,\sigma(u)]
\end{eqnarray*}
where the involution $\sigma$ on $Cl(\mathfrak{g})$ is defined in (\ref{def sigma clifford}) (for sake of simplicity, we use the same letter $\sigma$ to denote the involutions on $\Sigma$ and on $Cl(\mathfrak{g})$). It is well defined since $\sigma(\rho(h)u)=\rho(h)(\sigma(u))$ for all $h\in H$ and $u\in Cl(\mathfrak{g})$ ($\rho(h)$ belongs to $Spin(\mathfrak{m})$ and is thus invariant by $\sigma$ as a product of an even number of vectors of $\mathfrak{m}$).

\subsection{Spinorial geometry of a submanifold in $G/H$}\label{section spin sub}
If $M$ is a spin submanifold of $G/H,$ with normal bundle $E,$ then the Levi-Civita connection on $TM$ and the normal connection on $E$ induce a connection $\nabla$ on $\Sigma_{|M},$ still denoted by $\Sigma$ for sake of simplicity: if $Q_p$ and $Q_q$ respectively denote the bundles of positively oriented orthonormal frames of $TM$ and $E,$ then there exist spin structures $\widetilde{Q}_p\rightarrow Q_p$ and $\widetilde{Q}_q\rightarrow Q_q$ such that, if $\widetilde{Q}=G\times_{\widetilde{Ad}_{|H}} Spin(\mathfrak{m})$ is the spin structure of $G/H,$ there is a map $\widetilde{Q}_p\times_{_M}\widetilde{Q}_q\rightarrow\widetilde{Q}_{|M}$ above the natural map (\emph{the concatenation of bases}) $Q_p\times_{_M} Q_q\rightarrow Q_{|M}$ where $Q=G\times_{Ad_{|H}}SO(\mathfrak{m})$ is the bundle of frames of $G/H$.
By \cite{Bar2,BHMMM}, $\nabla$ and $\nabla^o$ are related by the spinorial Gauss formula
\begin{equation}\label{gauss nabla nablao}
\nabla^o_X\varphi=\nabla_X\varphi+\frac{1}{2}II(X)\cdot\varphi
\end{equation}
for all $X\in TM$ and $\varphi\in\Gamma(\Sigma),$ where, if $e_1,\ldots,e_p$ is an orthonormal basis of $TM,$ 
\begin{equation}\label{def II Clifford}
\frac{1}{2}II(X):=\frac{1}{2}\sum_{i=1}^pe_i\cdot II(X,e_i)\ \in Cl_{\Sigma}
\end{equation}
represents the linear map 
$$\widetilde{II}(X,.):\ Y=Y_M+Y_E\ \in TM\oplus E\mapsto -II^*(X,Y_E)+II(X,Y_M)\in TM\oplus E$$
($II^*(X,.):E\rightarrow TM$ denotes the adjoint of $II(X,.):TM\rightarrow E$); see Lemma \ref{lem3 ap1} in the appendix.

\subsection{A special spinor field on $G/H$}\label{section global section}
Let us begin with a general remark concerning homogeneous bundles on a homogeneous manifold $G/H:$ if $\rho:G\rightarrow GL(V)$ is a linear representation of \emph{the entire group} $G$ and $E_V:=G\times_HV$ is the vector bundle on $G/H$ naturally associated to the representation $\rho_{|H}:H\rightarrow GL(V)$ then the bundle $E_V$ is trivial: the map
\begin{eqnarray*}
E_V=G\times_HV&\rightarrow& G/H\times V\\
\ [g,v]&\mapsto &(gH,\rho(g)(v))
\end{eqnarray*}
is well defined since, for $(g,v)\in G\times V$ and $h\in H,$
$$\left(gh,\rho(h^{-1})(v)\right)\sim_H(g,v)$$
and
$$\rho(gh)\left(\rho(h^{-1})(v)\right)=\rho(g)(v),$$
and is a global trivialization of the bundle. Using in an essential way that the representation (\ref{spin structure}) is the restriction of the representation
$$\widetilde{Ad}:\hspace{.5cm}G\rightarrow Spin(\mathfrak{g})$$
of the entire group $G,$ we set
\begin{eqnarray}
\varphi:\hspace{.5cm}G/H&\rightarrow& U\Sigma \label{def phi}\\
gH&\mapsto&[g,\widetilde{Ad}(g^{-1})].\nonumber
\end{eqnarray}
The section $\varphi$ appears to be the constant section $1_{Cl(\mathfrak{g})}$ in the natural trivialization $\Sigma\simeq G/H\times Cl(\mathfrak{g})$ described above ($\rho$ is here the composition of $\widetilde{Ad}:G\rightarrow Spin(\mathfrak{g})$ with the multiplication on the left on $Cl(\mathfrak{g})$).
Let us verify directly that it is well defined: since
$$\widetilde{Ad}((gh)^{-1})=\widetilde{Ad}(h)^{-1}\widetilde{Ad}(g^{-1})=\rho(h)^{-1}(\widetilde{Ad}(g^{-1}))$$
we have
$$[gh,\widetilde{Ad}((gh)^{-1})]= [g,\widetilde{Ad}(g^{-1})]\ \mbox{in}\ U\Sigma$$
for all $g\in G$ and $h\in H,$ and (\ref{def phi}) defines a global spinor field $\varphi\in\Gamma(U\Sigma).$
\begin{prop}
The spinor field $\varphi$ satisfies the Killing type equation
\begin{equation}\label{killing equation nablao}
\nabla^o_X\varphi=-\frac{1}{2}ad(X)\cdot\varphi.
\end{equation}
for all $X\in T(G/H).$
\end{prop}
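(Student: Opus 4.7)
The plan is to work in a local trivialization of $\Sigma$ and apply the general formula (\ref{nabla phi rho}) for $\nabla^o$ on an associated bundle. Fix $x_o \in G/H$ and choose a local section $s:U\to G$ of $\pi:G\to G/H$ defined on a neighborhood $U$ of $x_o$, with $s(x_o)=g$. In the trivialization $\Sigma \simeq G\times_\rho Cl(\mathfrak{g})$ induced by $s$, the section $\varphi$ is represented by
\[
[\varphi](x) \;=\; \widetilde{Ad}(s(x)^{-1}),
\]
and formula (\ref{nabla phi rho}) yields
\[
\nabla^o_X\varphi \;=\; \bigl[s,\ \partial_X\widetilde{Ad}(s^{-1})+\widetilde{ad}\bigl(s^*\alpha(X)\bigr)\cdot \widetilde{Ad}(g^{-1})\bigr],
\]
since $\rho_*$ acts on $Cl(\mathfrak{g})$ by left multiplication by $\widetilde{ad}$ of its argument.

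Next I would compute $\partial_X\widetilde{Ad}(s^{-1})$ explicitly. Write $Y:=s^*\omega_G(X)\in\mathfrak{g}$ and decompose it as $Y=Y_{\mathfrak{h}}+Y_{\mathfrak{m}}$ according to (\ref{decomposition reductif}); by the definition (\ref{connection alpha}) of the canonical connection form, $Y_{\mathfrak{h}}=s^*\alpha(X)$. Choosing a curve $\gamma$ with $\gamma(0)=x_o$, $\gamma'(0)=X$, and setting $g(t):=s(\gamma(t))=g\cdot k(t)$ with $k(0)=e$, $k'(0)=Y$, the morphism property of $\widetilde{Ad}$ gives
\[
\widetilde{Ad}(g(t)^{-1}) \;=\; \widetilde{Ad}(k(t)^{-1})\cdot \widetilde{Ad}(g^{-1}).
\]
Differentiating at $t=0$ and using that the differential of $\widetilde{Ad}$ at $e$ is $\widetilde{ad}=\tfrac{1}{2}ad$, one obtains
\[
\partial_X\widetilde{Ad}(s^{-1}) \;=\; -\tfrac{1}{2}\,ad(Y)\cdot\widetilde{Ad}(g^{-1}).
\]

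Substituting this into the formula for $\nabla^o_X\varphi$ and using $\widetilde{ad}(Y_{\mathfrak{h}})=\tfrac{1}{2}ad(Y_{\mathfrak{h}})$, the $\mathfrak{h}$-part cancels and leaves
\[
\nabla^o_X\varphi \;=\; \bigl[s,\ -\tfrac{1}{2}\,ad(Y_{\mathfrak{m}})\cdot \widetilde{Ad}(g^{-1})\bigr].
\]
To finish, I would identify $Y_{\mathfrak{m}}$ with the $\mathfrak{m}$-coordinate of $X$ in the trivialization (\ref{def TGH}): by construction of $s$ and $\omega_G$, one has $X=[s(x_o),Y_{\mathfrak{m}}]\in G\times_{Ad_{|H}}\mathfrak{m}$, so that by the equivariant definition (\ref{def equiv ad}) of $ad$ and the description of the Clifford action on $\Sigma$ we get
\[
ad(X)\cdot\varphi \;=\; \bigl[s(x_o),\ ad(Y_{\mathfrak{m}})\cdot \widetilde{Ad}(g^{-1})\bigr],
\]
which gives the claimed identity $\nabla^o_X\varphi=-\tfrac{1}{2}ad(X)\cdot\varphi$.

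The only real subtlety is the bookkeeping in step two: one must carefully invoke that $\widetilde{Ad}:G\to Spin(\mathfrak{g})$ is a Lie group morphism with $d\widetilde{Ad}_e=\tfrac{1}{2}ad$ in order to get the sign and the factor of $\tfrac{1}{2}$ correct, and then check that the connection term $\widetilde{ad}(s^*\alpha(X))$ exactly cancels the $\mathfrak{h}$-part of $-\tfrac{1}{2}ad(Y)$. Everything else is essentially a matter of unwinding the definitions of the trivializations of $\Sigma$, $T(G/H)$ and $Cl_\Sigma$.
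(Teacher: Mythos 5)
Your proposal is correct and takes essentially the same approach as the paper: you work in a local trivialization via a section $s$ of $G\to G/H$, apply formula (\ref{nabla phi rho}), use that $\widetilde{Ad}$ is a group morphism with differential $\widetilde{ad}=\tfrac{1}{2}ad$ to compute $\partial_X\widetilde{Ad}(s^{-1})$, observe the cancellation of the $\mathfrak{h}$-part against the connection term $\widetilde{ad}(s^*\alpha(X))$, and identify the remaining $\mathfrak{m}$-part with the representative of $X$ in $T(G/H)=G\times_{Ad_{|H}}\mathfrak{m}$. The only cosmetic difference is that you differentiate along a curve $s(\gamma(t))=g\cdot k(t)$ directly, whereas the paper reaches the same intermediate identity $\partial_X\widetilde{Ad}(s^{-1})=-d\widetilde{Ad}(s^{-1}\partial_Xs)\,\widetilde{Ad}(s^{-1})$ via the relation $\widetilde{Ad}(s)\widetilde{Ad}(s^{-1})=1$.
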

\begin{proof}
Let us fix a local section $s$ of $G\rightarrow G/H,$ and consider $[\varphi]\in Spin(\mathfrak{g})$ such that $\varphi=[s,[\varphi]].$ By definition of the covariant derivative $\nabla^o,$ we have, for all $X\in T(G/H),$
$$\nabla^o_X\varphi=\left[s,\partial_X[\varphi]+d\widetilde{Ad}(s^*\alpha(X))[\varphi]\right],$$
where $\alpha$ is the canonical connection form on $G$ defined in (\ref{connection alpha}). Since $\alpha$ is the projection to $\mathfrak{h}$ of the Maurer-Cartan form of $G,$ and since $[\varphi]=\widetilde{Ad}(s^{-1}),$ we get
\begin{equation}\label{first nablao}
\nabla^o_X\varphi=\left[s,\partial_X\widetilde{Ad}(s^{-1})+d\widetilde{Ad}\left\{(s^{-1}\partial_Xs)_{\mathfrak{h}}\right\}\widetilde{Ad}(s^{-1})\right]
\end{equation}
where the sub-index $\mathfrak{h}$ means that we take in the decomposition (\ref{decomposition reductif}) the component of the vector belonging to $\mathfrak{h}.$ Since $\widetilde{Ad}(s)\widetilde{Ad}(s^{-1})=1_{Cl(\mathfrak{g})},$ we get 
\begin{equation}\label{formula s-1}
\partial_X\widetilde{Ad}(s^{-1})=-\widetilde{Ad}(s^{-1})\ \partial_X\widetilde{Ad}(s)\ \widetilde{Ad}(s^{-1})
\end{equation} 
where the product is the product in $Cl(\mathfrak{g}).$ Moreover,
\begin{equation}\label{formula drho}
\widetilde{Ad}(s^{-1})\ \partial_X\widetilde{Ad}(s)=d\widetilde{Ad}(s^{-1}\partial_Xs)
\end{equation}
(if $t\mapsto\gamma(t)$ is a path in $G$ which is tangent to $\partial_Xs$ at $t=0$, then
$$\widetilde{Ad}(s^{-1})\widetilde{Ad}(\gamma(t))=\widetilde{Ad}(s^{-1}\gamma(t))$$
which implies (\ref{formula drho}) by derivation). Thus (\ref{formula s-1}) reads
\begin{equation*}
\partial_X\widetilde{Ad}(s^{-1})=-d\widetilde{Ad}(s^{-1}\partial_Xs)\ \widetilde{Ad}(s^{-1}).
\end{equation*}
Plugging this formula in (\ref{first nablao}) we get
\begin{equation}\label{second nablao}
\nabla^o_X\varphi=\left[s,-d\widetilde{Ad}\left\{(s^{-1}\partial_Xs)_{\mathfrak{m}}\right\} \widetilde{Ad}(s^{-1})\right].
\end{equation}
Now $X_{\mathfrak{m}}:=(s^{-1}\partial_Xs)_{\mathfrak{m}}$ is such that $X=[s,X_{\mathfrak{m}}]$ in $T(G/H)=G\times_{Ad_{|H}}\mathfrak{m};$ since
$$-d\widetilde{Ad}\left\{(s^{-1}\partial_Xs)_{\mathfrak{m}}\right\} =-d\widetilde{Ad}(X_{\mathfrak{m}})=-\frac{1}{2}ad(X_{\mathfrak{m}}),$$
the result follows.
\end{proof}
 In view of (\ref{gauss nabla nablao}) and (\ref{killing equation nablao}) the special spinor field $\varphi$ introduced above is a solution of the Killing type equation
 \begin{equation*}
\nabla_X\varphi=-\frac{1}{2}II(X)\cdot\varphi-\frac{1}{2}ad(X)\cdot\varphi
\end{equation*}
for all $X\in TM.$
\begin{rem}\label{rem special spinor}
For the spinor field $\varphi$ defined in (\ref{def phi}), the expression $\langle\langle \sigma(\varphi),\varphi\rangle\rangle$ is the composition of the Cartan embedding 
$$G/H\rightarrow G,\ \ gH\mapsto g\ \sigma(g^{-1})$$
with the representation $\widetilde{Ad}:G\rightarrow Spin(\mathfrak{g}).$ Indeed, at $gH,$ we have $[\varphi]=\widetilde{Ad}(g^{-1})$ and
$$\langle\langle \sigma(\varphi),\varphi\rangle\rangle =\tau[\varphi] \sigma[\varphi]=\tau\widetilde{Ad}(g^{-1})\ \sigma\widetilde{Ad}(g^{-1})=\widetilde{Ad}\left(g\ \sigma(g^{-1})\right),$$
since $\tau\widetilde{Ad}(g^{-1})=\widetilde{Ad}(g^{-1})^{-1}=\widetilde{Ad}(g)$ and $\sigma$ commutes with $\widetilde{Ad}.$
\end{rem}
 
\section{The spinor bundle in the abstract setting}\label{section abstract setting}
We do here the converse constructions, and define the spinor and Clifford bundles (and the various objects defined on them) in the abstract context, i.e. without assuming that $M$ is a submanifold of $G/H$ (if $M$ is a submanifold of $G/H,$ we naturally suppose that these objects are those constructed in Sections \ref{section cartan embedding} and \ref{section spinor bundle GH}). We suppose that $p$ and $q$ are positive integers such that $p+q=\dim(G/H),$ assume that $M$ is a $p$-dimensional Riemannian manifold and $E$ is a real vector bundle of rank $q,$ with a scalar product in the fibers and a compatible connection. We moreover suppose that $M$ and $E$ are spin, with spin structures $\widetilde{Q}_p\rightarrow Q_p$ and $\widetilde{Q}_q\rightarrow Q_q,$ and consider 
$$\widetilde{Q}_{p,q}:=\widetilde{Q}_p\times_{_M}\widetilde{Q}_q\rightarrow Q_p\times_{_M} Q_q.$$
The morphism
$$r_{p,q}:\ Spin(p)\times Spin(q)\rightarrow Spin(\mathfrak{m})$$
associated to a splitting $\mathfrak{m}=\R^p\oplus\R^q$ gives rise to a bundle
$$\widetilde{Q}:=\widetilde{Q}_{p,q}\times_{r_{p,q}} Spin(\mathfrak{m}).$$
We need to suppose that there exists a $H$-principal bundle $\widetilde{Q}_H,$ reduction of the bundle $\widetilde{Q}$ by the map $\widetilde{Ad}_{|H}: H\rightarrow  Spin(\mathfrak{m}).$ Note that this condition is necessary to obtain an immersion of $M$ into $G/H$ (recall the definition of $\widetilde{Q}$ from the $H$-principal bundle $G\rightarrow G/H$ in Section \ref{section spin sub}), and we will see in Remark \ref{rmk rep H3} ii) below that it is trivially satisfied in dimension 3. In accordance with Section \ref{section spinor bundle GH}, we set
$$\Sigma:=\widetilde{Q}_H\times_\rho Cl(\mathfrak{g}),\hspace{.5cm}U\Sigma:=\widetilde{Q}_H\times_\rho \widetilde{Ad}(G)\hspace{.3cm}\mbox{and}\hspace{.3cm}Cl_\Sigma=\widetilde{Q}_H\times_{Ad_{|H}}Cl(\mathfrak{g}).$$
The bundles $\Sigma$ and $Cl_{\Sigma}$ are equipped with connections $\nabla$ induced by the Levi-Civita connection on $TM$ and the given connection on $E$ (these bundles may be regarded as associated to the bundle $\widetilde{Q}$ since $\widetilde{Q}_H$ is a reduction of $\widetilde{Q}$ and the morphisms $\rho:H\rightarrow Aut(Cl(\mathfrak{g}))$ and $Ad_{|H}:H\rightarrow Aut(Cl(\mathfrak{g}))$ factorize through $\widetilde{Ad}_{|H}:H\rightarrow Spin(\mathfrak{m})$ and $Ad_{|H}:H\rightarrow SO(\mathfrak{m})$ respectively). Let us note that 
$$\displaystyle{Cl_{\Sigma}=Cl((TM\oplus E)^{\C})}$$
since 
$$TM\oplus E=\widetilde{Q}_{H}\times_{Ad_{|H}}\mathfrak{m}$$
and $Cl(\mathfrak{m}\otimes\C)=Cl(\mathfrak{g}).$ We moreover naturally construct on these bundles 
$$\langle\langle.,.\rangle\rangle:\Sigma\times\Sigma \rightarrow Cl(\mathfrak{g}),\ \sigma:\Sigma\rightarrow\Sigma\hspace{.3cm}\mbox{and}\hspace{.3cm}ad\in(TM\oplus E)^*\otimes Cl_{\Sigma}$$
corresponding to the objects introduced in Section \ref{section spinor bundle GH}. Let us give some details (they may be skipped in a first reading). We construct:
\begin{enumerate}
\item the map
$$\langle\langle.,.\rangle\rangle:\hspace{.5cm} \Sigma\times\Sigma \rightarrow Cl(\mathfrak{g}),\hspace{.5cm}(\varphi,\psi)\mapsto\tau[\psi][\varphi];$$
it satisfies the properties (\ref{scalar product property1}) and (\ref{scalar product property2}), and is also compatible with the connection $\nabla$ (see the proof of Lemma \ref{lemma properties tau 2});
\item involutions $\sigma$ of $\Sigma$ and $Cl_{\Sigma}$ (naturally constructed from $\sigma:Cl(\mathfrak{g})\rightarrow Cl(\mathfrak{g})$), such that $\sigma(\eta\cdot\varphi)=\sigma(\eta)\cdot\sigma(\varphi)$ for all $\eta\in\Gamma(Cl_{\Sigma})$ and $\varphi\in\Gamma(\Sigma);$ 
\item  the adjoint map $ad\in (TM\oplus E)^*\otimes Cl_{\Sigma}$ by the formula
\begin{eqnarray*}
ad:\hspace{.5cm}TM\oplus E=\widetilde{Q}_H\times_{Ad_{|H}}\mathfrak{m}&\rightarrow& \widetilde{Q}_H\times_{Ad_{|H}}\Lambda^2\mathfrak{g}\subset Cl_{\Sigma}\\
X=[s,u]&\mapsto&ad(X)= [s,ad(u)];
\end{eqnarray*}
it is well defined by (\ref{def Ad conjug}) and (\ref{pty bracket Ad inv}). Note that $ad(X)$ represents an endomorphism of $(TM\oplus E)^{\C};$ more precisely, since $ad(u)(\mathfrak{m})\subset\mathfrak{h}$ and $ad(u)(\mathfrak{h})\subset\mathfrak{m}$ for all $u\in\mathfrak{m},$  $ad(X)$ exchanges $TM\oplus E$ and $i(TM\oplus E),$ that is $ad(X)$ belongs to $i\Lambda^2(TM\oplus E).$ 
\end{enumerate}

We finally assume that a symmetric and bilinear map $II:TM\times TM\rightarrow E$ is given, which satisfies the following necessary compatibility condition (see Lemma \ref{lemma nablao ad commute}): for $X\in TM$ and $Y=Y_M+Y_E\in \Gamma(TM\oplus E),$ setting
$$\widetilde{II}(X,Y):=II(X,Y_M)-II^*(X,Y_E)$$
and
$$\nabla^o_XY:=\nabla_XY+\widetilde{II}(X,Y),$$ 
then
\begin{equation}\label{compatibility nablao ad}
\nabla_X^o\circ ad(Y)-ad(Y)\circ\nabla_X^o=ad(\nabla_X^oY).
\end{equation}
Note that $II$ may be regarded as a section of $T^*M\otimes Cl_{\Sigma},$ as in (\ref{def II Clifford}).
\begin{rem}
1. Equation (\ref{compatibility nablao ad}) is equivalent to the following two equations:
\begin{equation}\label{compatibility II ad}
\widetilde{II}(X,.)\circ ad(Y)-ad(Y)\circ \widetilde{II}(X,.)=ad(\widetilde{II}(X,Y))
\end{equation}
for all $X\in TM$ and $Y\in TM\oplus E$ and 
\begin{equation}\label{compatibility nabla ad}
\nabla_X\circ ad(Y)-ad(Y)\circ\nabla_X=ad(\nabla_XY)
\end{equation}
for all $X\in TM$ and $Y\in\Gamma(TM\oplus E).$ Indeed (\ref{compatibility II ad}) (and thus also (\ref{compatibility nabla ad})) follows from (\ref{compatibility nablao ad}) if we choose $Y\in\Gamma(TM\oplus E)$ such that $\nabla Y=0$ at some given point.

2. The following observation will be useful in Section \ref{section n geq 3}: the map $B:Cl(\mathfrak{g})\times Cl(\mathfrak{g})\rightarrow\C$ introduced in Appendix \ref{appendix B} (the natural extension to the Clifford algebra of $B:\mathfrak{g}\times\mathfrak{g}\rightarrow\C$) is $Ad$-invariant, and thus induces a map $B:\ Cl_{\Sigma}\times Cl_{\Sigma}\rightarrow\C;$ it is bilinear and symmetric and coincides with the metric on $TM\oplus E\subset Cl_{\Sigma}.$ 
\end{rem}

\section{Statement of the main result}\label{section main result}
We keep here the definitions, notations and hypotheses of the previous section and state the main result of the paper. For sake of clarity we briefly summarize all the required assumptions (and refer to the previous section for the constructions and details): the symmetric space $G/H$ is fixed, $M$ is an abstract Riemannian manifold of dimension $p$ and $E$ is a real vector bundle on $M$ of rank $q$ such that $p+q=dim(G/H),$ with a fibre metric and a connection compatible with the metric. We suppose that $M$ and $E$ are spin, with given spin structures, and further do the following three assumptions:
\begin{enumerate}
\item[(H1)]\label{H1}
a reduction $\widetilde{Q}_H$ of the spin structure $\widetilde{Q}$ on $TM\oplus E$ is given;
\\
\item[(H2)]\label{H2}
a map $II:TM\times TM\rightarrow E$ is given, which is symmetric, bilinear and compatible with the map $ad\in\Gamma\left((TM\oplus E)^*\otimes Cl_{\Sigma}\right)$ constructed in the previous section; 
\\
\item[(H3)]\label{H3}
the natural map %$\widetilde{Ad}\circ c:
$G/H\rightarrow Spin(\mathfrak{g})$ introduced in Section \ref{section cartan embedding} is an embedding, i.e. its range $\mathcal{H}$ is a model of $G/H$ into $Spin(\mathfrak{g})$.
\\
\end{enumerate}
Under these hypotheses, the following holds:
\begin{thm}\label{main result}
The following two statements are equivalent:
\begin{enumerate}
\item there is a solution $\varphi\in\Gamma(U\Sigma)$ of
\begin{equation}\label{killing equation}
\nabla_X\varphi=-\frac{1}{2}II(X)\cdot\varphi-\frac{1}{2}ad(X)\cdot\varphi
\end{equation}
for all $X\in TM;$
\item there is an isometric immersion $F:\ M\rightarrow G/H$ with normal bundle $E$ and second fundamental form $II.$
\end{enumerate}
Moreover, the isometric immersion is explicitly written in terms of the spinor field by the formula
\begin{equation}\label{weierstrass}
F=\langle\langle\sigma(\varphi),\varphi\rangle\rangle\ \in\mathcal{H}\simeq G/H.
\end{equation}
\end{thm}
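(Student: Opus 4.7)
The plan is to establish both directions of the equivalence; $(2)\Rightarrow(1)$ is an almost immediate pullback, while $(1)\Rightarrow(2)$ contains the bulk of the content. For the forward direction, given an isometric immersion $F:M\to G/H$ with normal bundle $E$ and second fundamental form $II$, I would set $\varphi := F^*\varphi_0$, where $\varphi_0\in\Gamma(U\Sigma_{G/H})$ is the canonical global spinor of Section \ref{section global section}. Combining the Killing-type equation $\nabla^o_X\varphi_0 = -\tfrac{1}{2}ad(X)\cdot\varphi_0$ with the spinorial Gauss formula (\ref{gauss nabla nablao}) for $F$ yields (\ref{killing equation}) at once.

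For the converse, define $F := \langle\langle\sigma(\varphi),\varphi\rangle\rangle$. I would first verify that $F$ takes values in $\mathcal{H}\simeq G/H$: locally $[\varphi] = \widetilde{Ad}(a)$ for some $a:U\to G$ (this is what $\varphi\in\Gamma(U\Sigma)$ gives), whence a direct Clifford computation yields $F = \tau[\varphi]\sigma[\varphi] = \widetilde{Ad}(a^{-1}\sigma(a)) = \widetilde{Ad}\circ c(a^{-1}H)$; the frame ambiguity cancels because $Spin(\mathfrak{m})$ is pointwise fixed by $\sigma$, and hypothesis (H3) identifies $\mathcal{H}$ with $G/H$. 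Next I would compute $\partial_X F$ via the compatibility of $\nabla$ with $\langle\langle\cdot,\cdot\rangle\rangle$ (analogue of Lemma \ref{lemma properties tau 2}) and the commutation $\sigma\circ\nabla = \nabla\circ\sigma$. The key algebraic inputs are $\sigma(II(X)) = II(X)$ (since $II(X)\in Cl(\mathfrak{m})$ is real), $\sigma(ad(X)) = -ad(X)$ (since $ad(X)$ pairs $\mathfrak{m}$ with $\mathfrak{h}$ and so lies in $i\cdot Cl(\mathfrak{m})$), and the reversion signs $\tau(II(X)) = -II(X)$, $\tau(ad(X)) = -ad(X)$. The hoped-for cancellation of the $II$-contributions then gives
$$\partial_X F \;=\; \tau[\varphi]\cdot ad(X)\cdot\sigma[\varphi] \;=\; \tilde a\cdot ad(X)\cdot \tilde a^{*},$$
with $\tilde a := \widetilde{Ad}(a^{-1})$ and $\tilde a^{*} := \widetilde{Ad}(\sigma(a))$, so that $F = \tilde a\,\tilde a^{*}$. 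Direct differentiation of $\widetilde{Ad}\circ c$ shows that the tangent space to $\mathcal{H}$ at $\tilde a\tilde a^{*}$ consists precisely of vectors of this form as $u$ ranges over $\mathfrak{m}$, and the normalization (\ref{Bp ad B}) of $B'$ together with its bi-invariance makes $X\mapsto dF(X)$ an isometric injection onto the $\mathfrak{m}$-factor corresponding to $TM$ in $T_{F(m)}(G/H)$; the splitting of $\mathfrak{m}$ underlying the reduction $\widetilde{Q}_H$ (hypothesis (H1)) then identifies the orthogonal complement with $E$.

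It remains to identify the second fundamental form. The same frame $a$ used above lifts $F$ to a principal bundle map, and under this lift one verifies that $\varphi = F^*\varphi_0$. Substituting this identity into the spinorial Gauss formula $\nabla^o_X(F^*\varphi_0) = \nabla_X\varphi + \tfrac{1}{2}II^F(X)\cdot\varphi$ (where $II^F$ is the second fundamental form of $F$) and comparing with the hypothesized (\ref{killing equation}) and the pulled-back Killing identity on $\varphi_0$ forces $II^F(X)\cdot\varphi = II(X)\cdot\varphi$, whence $II^F = II$ by the injectivity of the Clifford representation of the second fundamental form (as discussed in the appendix). The main obstacle in the plan is the cancellation computation leading to $\partial_X F = \tilde a\cdot ad(X)\cdot\tilde a^{*}$: it is essentially mechanical, but requires careful bookkeeping of the interplay between $\sigma$, $\tau$, the Clifford multiplication, and the reality/imaginary decomposition of $II(X)$ and $ad(X)$.
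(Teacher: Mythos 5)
Your treatment of $(2)\Rightarrow(1)$ matches the paper's, and the first half of your converse (showing that $F$ lands in $\mathcal{H}$, computing $\partial_X F$ via the sign bookkeeping $\sigma(II(X))=II(X)$, $\sigma(ad(X))=-ad(X)$, and concluding that $F$ is an isometric immersion by the bi-invariance of $B'$ and (\ref{Bp ad B})) is also the paper's argument, compressing Lemma \ref{lem F-1dF} and the first assertion of Lemma \ref{lem F isom immersion}.

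Where you diverge — and where a genuine gap opens — is in the identification of the second fundamental form. You assert $\varphi = F^*\varphi_0$ (with $\varphi_0$ the canonical spinor of Section \ref{section global section}) and then invoke the spinorial Gauss formula to force $II^F = II$. This is circular as written. Asserting $\varphi=F^*\varphi_0$ as sections of \emph{bundles with connection} requires an identification of the abstract $(\Sigma,\nabla)$ with $(F^*\Sigma_{G/H}, F^*\nabla^o \text{ adjusted by Gauss})$; but the Gauss connection $\nabla^F$ on the pullback is built from the \emph{intrinsic} normal connection $\nabla'^F$ of the immersion $F$, not from the \emph{given} connection $\nabla'$ on $E$. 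Whether these agree is precisely one of the things to be proved (it is statement (\ref{nabla' preserve}) of Lemma \ref{lem F isom immersion}). Your display $\nabla^o_X(F^*\varphi_0)=\nabla_X\varphi+\tfrac{1}{2}II^F(X)\cdot\varphi$ silently uses $\nabla^F=\nabla$. If one instead writes the honest comparison, one obtains
$(\nabla^F_X - \nabla_X)\varphi = -\tfrac{1}{2}\bigl(II^F(X)-II(X)\bigr)\cdot\varphi$;
the left side lies in $\Lambda^2 E$ (difference of two normal connections) and the right side in $TM\otimes E$ (mixed Clifford bivectors), so since $\varphi$ is invertible and these subspaces of $\Lambda^2(TM\oplus E)$ intersect trivially one can in fact conclude $II^F=II$ \emph{and} $\nabla'^F=\nabla'$ simultaneously. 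But you have not made this decomposition explicit, you have not addressed the normal connection at all, and you have not justified the bundle identification needed to even state $\varphi=F^*\varphi_0$ (which is where hypothesis (H1) must enter). The paper avoids this entire circle: it never identifies $\varphi$ with a pullback, but instead computes $II^F(F_*X,F_*Y)=\{\widetilde\nabla^o_X(\partial_Y F)\}^N$ directly from the explicit formula for $F$, using that $\mathcal{H}$ is totally geodesic in $\widetilde{Ad}(G)$, the Koszul-type formula (\ref{def nablao tilde}), and the crucial algebraic identity (\ref{general II nabla preserved}) (which is where the compatibility hypothesis (H2) enters). You should either switch to that direct computation or supply the missing decomposition-and-identification argument before the claim $\varphi=F^*\varphi_0$ can be used.
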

Formula (\ref{weierstrass}) (together with (\ref{killing equation})) is interpreted as a generalized Weierstrass representation formula.
\begin{rem}
We briefly comment the hypotheses (H1), (H2) and (H3) of the theorem: let us first recall that (H1) and (H2) are necessary to obtain an immersion of $M$ in $G/H$ and also that (H3) is satisfied in the most important case $G/H=SL_n(\C)/SU(n)$ (see Section \ref{section cartan embedding}); moreover, we will see in Remark \ref{rmk rep H3} ii) below that in dimension 3 (H1) is not an additional requirement and (H2) is satisfied for the model $G/H=SL_2(\C)/SU(2)$ of $\mathbb{H}^3$. However, in general, (H1) and (H2) are certainly strong (but necessary) compatibility assumptions on the data.
 \end{rem}
\section{Proof of theorem \ref{main result}}\label{section proof main result}
\subsection{Proof of $(2) \Rightarrow (1)$}
From the considerations above, an immersion of $M$ into $G/H$ gives rise to a normalized spinor field solution of (\ref{killing equation}): $\varphi$ is the restriction to $M$ of the special spinor field (\ref{def phi}). It remains to prove that formula (\ref{weierstrass}) holds: by Remark \ref{rem special spinor} ii), (\ref{weierstrass}) is the composition of the Cartan embedding $c:G/H\rightarrow G$ with the representation $\widetilde{Ad}:G\rightarrow Spin(\mathfrak{g}),$ in restriction to $M.$ So (\ref{weierstrass}) obviously holds if $G/H$ is identified with its model $\mathcal{H}=(\widetilde{Ad}\circ c)(G/H)$ in $Spin(\mathfrak{g}).$
\subsection{Proof of $(1) \Rightarrow (2)$}
We verify that formula (\ref{weierstrass}) gives the required immersion. We first note the following formula:
\begin{lem}\label{lem F-1dF}
If $\varphi$ is a solution of (\ref{killing equation}) and $F:=\langle\langle\sigma(\varphi),\varphi\rangle\rangle,$ we have, $\forall X\in TM,$
\begin{equation}\label{formula lem dF}
\partial_XF=\langle\langle ad(X)\cdot\sigma(\varphi),\varphi\rangle\rangle.
\end{equation} 
\end{lem}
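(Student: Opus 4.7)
The plan is to differentiate $F=\langle\langle \sigma(\varphi),\varphi\rangle\rangle$ using the Leibniz rule for $\langle\langle\cdot,\cdot\rangle\rangle$ proved in Lemma \ref{lemma properties tau 2} (which remains valid in the abstract setting, since $\nabla$ is compatible with the same product), obtaining
\begin{equation*}
\partial_X F=\langle\langle \nabla_X\sigma(\varphi),\varphi\rangle\rangle+\langle\langle \sigma(\varphi),\nabla_X\varphi\rangle\rangle.
\end{equation*}
Since $\widetilde{Q}_H$ is an $H$-principal bundle, the connection form on it has values in $\mathfrak{h}$, and its image under $\rho_*$ acts on $\Sigma$ by left Clifford multiplication by an element of $\Lambda^2\mathfrak{h}\subset Cl(\mathfrak{g})$, which is fixed by the involution $\sigma$. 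Hence $\sigma$ commutes with $\nabla$, so $\nabla_X\sigma(\varphi)=\sigma(\nabla_X\varphi)$; substituting the Killing equation (\ref{killing equation}) then expresses the first term in $\sigma(II(X))\cdot\sigma(\varphi)$ and $\sigma(ad(X))\cdot\sigma(\varphi)$.

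The next step is to pin down how $\sigma$ acts on $II(X)$ and $ad(X)$ when $X\in TM$. A tangent vector $X$ is represented by an element of $\mathfrak{m}=i\mathfrak{h}$, on which $\sigma$ acts as $-\mathrm{id}$: in $II(X)=\sum_i e_i\cdot II(X,e_i)$ both factors of each Clifford product lie in $\mathfrak{m}$, so the two sign changes cancel and $\sigma(II(X))=II(X)$; on the other hand $ad(X)$ exchanges $\mathfrak{h}$ and $\mathfrak{m}$, hence lies in $\mathfrak{h}\wedge\mathfrak{m}\subset\Lambda^2\mathfrak{g}$, a subspace on which $\sigma$ acts as $-\mathrm{id}$, so $\sigma(ad(X))=-ad(X)$. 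Therefore
\begin{equation*}
\nabla_X\sigma(\varphi)=-\tfrac12 II(X)\cdot\sigma(\varphi)+\tfrac12 ad(X)\cdot\sigma(\varphi).
\end{equation*}

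To finish, I would use the extension of property (\ref{scalar product property2}) to general Clifford elements, namely $\langle\langle \eta\cdot\varphi,\psi\rangle\rangle=\langle\langle \varphi,\tau(\eta)\cdot\psi\rangle\rangle$, noting that both $II(X)$ and $ad(X)$ lie in $\Lambda^2\mathfrak{g}$ (with the factors of $II$ being $B$-orthogonal), so $\tau(\eta)=-\eta$ in each case. A short bookkeeping then shows that the two $II$-contributions cancel against each other, while the two $ad$-contributions add up to $-\langle\langle \sigma(\varphi),ad(X)\cdot\varphi\rangle\rangle$; one last application of the $\tau$-swap rewrites this as $\langle\langle ad(X)\cdot\sigma(\varphi),\varphi\rangle\rangle$, which is (\ref{formula lem dF}). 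The only delicate point is this sign bookkeeping, in particular the contrast $\sigma(II(X))=+II(X)$ versus $\sigma(ad(X))=-ad(X)$: it is exactly this asymmetry that forces the $II$-terms to drop out and the $ad$-terms to double.
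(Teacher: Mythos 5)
Your proof is correct and takes essentially the same route as the paper's: Leibniz rule plus commutation of $\sigma$ with $\nabla$, substitution of the Killing equation, the signs $\sigma(II(X))=II(X)$ and $\sigma(ad(X))=-ad(X)$, and the $\tau$-swap applied to bivectors. The only place you go beyond the paper is in justifying why $\sigma$ commutes with $\nabla$ (the connection form acts by a $\sigma$-fixed element of $\Lambda^2\mathfrak{m}\subset Cl(\mathfrak{g})$), which the paper uses without comment.
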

\noindent \textit{Proof of Lemma \ref{lem F-1dF}}. Using (\ref{killing equation}) we have
\begin{eqnarray*}
\partial_XF&=&\langle\langle \sigma(\nabla_X\varphi),\varphi\rangle\rangle+\langle\langle \sigma(\varphi),\nabla_X\varphi\rangle\rangle\\
&=&-\frac{1}{2}\langle\langle \sigma((II(X)+ad(X))\cdot\varphi),\varphi\rangle\rangle-\frac{1}{2}\langle\langle \sigma(\varphi),(II(X)+ad(X))\cdot\varphi\rangle\rangle.
\end{eqnarray*}
But
$$\langle\langle \sigma((II(X)+ad(X))\cdot\varphi),\varphi\rangle\rangle=\langle\langle (II(X)-ad(X))\cdot\sigma(\varphi),\varphi\rangle\rangle$$
since $\sigma(II(X))=II(X)$ and $\sigma(ad(X))=-ad(X)$ ($II(X)$ belongs to $\Lambda^2 (TM\oplus E)$ and $ad(X)$ to $i\Lambda^2 (TM\oplus E)$), and
$$\langle\langle \sigma(\varphi),(II(X)+ad(X))\cdot\varphi\rangle\rangle=-\langle\langle (II(X)+ad(X))\cdot \sigma(\varphi),\varphi\rangle\rangle$$
by (\ref{scalar product property2}) and since $II(X)+ad(X)$ is a bivector. This gives (\ref{formula lem dF}) i.e. the lemma. %$\box$%\begin{flushright}$\box$\end{flushright}%\end{proof}
\\

Using $F=\tau[\varphi] \sigma[\varphi]$ and $F^{-1}=\tau\sigma[\varphi] [\varphi]$ we easily deduce from the lemma the useful formula 
\begin{equation}\label{F^-1dF}
F^{-1}\partial_XF=-\sigma\langle\langle ad(X)\cdot\varphi,\varphi\rangle\rangle.
\end{equation}
The following lemma shows that $F$ is an isometric immersion with normal bundle $E$ and second fundamental form $II,$ and its proof will thus finish the proof of the theorem.
\begin{lem}\label{lem F isom immersion}
i) The map $F:M\rightarrow\mathcal{H}$ is an isometric immersion.
\\ii) Let us denote by $E^F,$ $II^F$ and ${\nabla'}^F$ the normal bundle, the second fundamental form and the normal connection of the immersion $F:M\rightarrow\mathcal{H}.$ The map
\begin{eqnarray}\label{def Phi}
\Phi:\hspace{1cm}E&\rightarrow&E^F\\
Z&\mapsto&\langle\langle ad(Z)\cdot\sigma(\varphi),\varphi\rangle\rangle\nonumber
\end{eqnarray}
is such that
\begin{equation}\label{II preserve}
II^F(F_*X,F_*Y)=\Phi(II(X,Y))
\end{equation}
and
\begin{equation}\label{nabla' preserve}
{\nabla'}^F_{X}\Phi(Z)=\Phi(\nabla'_XZ)
\end{equation}
for all $X,Y\in TM,$ $Z\in \Gamma(E).$
\end{lem}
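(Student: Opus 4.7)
\emph{Plan.} For part (i), I would start from Lemma~\ref{lem F-1dF} and rewrite $F^{-1}\partial_XF$ at the identity of $Spin(\mathfrak{g})$. Since $\varphi\in\Gamma(U\Sigma)$, its representative $[\varphi]$ lies in $\widetilde{Ad}(G)\subset Spin(\mathfrak{g})$, so $\tau[\varphi]=[\varphi]^{-1}$; combined with the fact that $\sigma$ is a complex-antilinear algebra automorphism of $Cl(\mathfrak{g})$ satisfying $\sigma(ad(X))=-ad(X)$ for $X$ corresponding to an element of $\mathfrak{m}$, this gives $F^{-1}\partial_XF=(\sigma[\varphi])^{-1}\cdot ad(X)\cdot\sigma[\varphi]$. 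Since $\sigma[\varphi]\in\widetilde{Ad}(G)$ and $ad(X)=2\widetilde{ad}(X)$ lies in the Lie algebra $\widetilde{ad}(\mathfrak{g})$, the conjugation stays in $T_1\widetilde{Ad}(G)$, so $F_*X\in T_F\widetilde{Ad}(G)$. The identity $F=[\varphi]^{-1}\sigma[\varphi]=bb^*$ with $b=[\varphi]^{-1}\in\widetilde{Ad}(G)$ places $F$ in the model $\mathcal{H}$ (hypothesis (H3)). For the metric, left-invariance of $B'$ on $Spin(\mathfrak{g})$ followed by bi-invariance of the extension of $B$ to $Cl(\mathfrak{g})$ (Lemma~\ref{lem1 ap2}) cancels the conjugation by $\sigma[\varphi]$; the antilinear identity $B(\sigma u,\sigma v)=\overline{B(u,v)}$ combined with the reality of $B$ on the real form $\mathfrak{m}$ and the normalization $B'(\widetilde{ad}(\cdot),\widetilde{ad}(\cdot))=\tfrac14 B(\cdot,\cdot)$ then yield $B'(F_*X,F_*Y)=B(X,Y)$.

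For part (ii), I define $\Phi$ by the stated formula, which is literally the formula for $F_*$ applied to $Z\in E$ rather than $X\in TM$. The same computation as in (i), carried out with $Z\in E$, immediately gives $B'(\Phi(Z),F_*X)=B(Z,X)=0$ (from $TM\perp E$ in $TM\oplus E$) and $B'(\Phi(Z),\Phi(W))=B(Z,W)$; as $\mathrm{rank}\,E=q=\mathrm{rank}\,E^F$, $\Phi$ is a bundle isometry onto $E^F$. For (\ref{II preserve}) and (\ref{nabla' preserve}), I would differentiate $F_*Y=\langle\langle ad(Y)\sigma(\varphi),\varphi\rangle\rangle$ (resp.\ $\Phi(Z)$) along $X\in TM$, using compatibility of $\nabla$ with both the Clifford product and $\langle\langle\cdot,\cdot\rangle\rangle$, the relation $\nabla\sigma=\sigma\nabla$ (the connection form lives in the real Clifford subalgebra), the identity $\nabla_X ad(Y)=ad(\nabla_XY)$ from (\ref{compatibility nabla ad}), and the Killing equation (\ref{killing equation}). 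Invoking once more that $II(X)+ad(X)$ is a bivector (so $\langle\langle B\psi,\varphi\rangle\rangle=-\langle\langle\psi,B\varphi\rangle\rangle$), together with $\sigma(II(X))=II(X)$ and $\sigma(ad(X))=-ad(X)$, the expansion collects into a tangential piece $F_*(\nabla_XY)$ plus a single commutator contribution $\tfrac12\langle\langle[II(X)+ad(X),ad(Y)]_{Cl}\cdot\sigma(\varphi),\varphi\rangle\rangle$.

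The remaining algebraic step is to identify this Clifford commutator with an image under $ad$: compatibility (\ref{compatibility II ad}) converts $[II(X),ad(Y)]_{Cl}$ into (a multiple of) $ad(\widetilde{II}(X,Y))$, and the Jacobi identity in $\mathfrak{g}$ handles $[ad(X),ad(Y)]_{Cl}=ad([X,Y])$, whose image splits tangentially thanks to $[\mathfrak{m},\mathfrak{m}]\subset\mathfrak{h}$ and $[\mathfrak{m},\mathfrak{h}]\subset\mathfrak{m}$. Projecting onto $\Phi(E)\subset T_F\mathcal{H}$ then leaves exactly $\Phi(II(X,Y))$, proving (\ref{II preserve}); (\ref{nabla' preserve}) follows from the analogous differentiation of $\Phi(Z)$ followed by extraction of the normal component. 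The principal obstacle is precisely this bookkeeping: matching the Clifford commutators produced by the Killing equation against the outputs of (\ref{compatibility II ad})--(\ref{compatibility nabla ad}), and cleanly separating tangential from normal contributions. It may also be useful to invoke the Levi-Civita formula (\ref{def nablao tilde}) for $\widetilde{\nabla}^o$ on $\widetilde{Ad}(G)$, which supplies the additional $\tfrac12[\underline{F_*X},\underline{F_*Y}]$ term relating the extrinsic derivative $\partial_XF_*Y$ to the intrinsic $\widetilde{\nabla}^o_{F_*X}(F_*Y)$ and so furnishes precisely the bracket structure needed to complete the $ad$-identifications.
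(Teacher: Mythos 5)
Part (i) matches the paper's proof: both rest on the bi-invariance of $B'$ under left/right multiplication by $Spin(\mathfrak{g})$ and the normalization (\ref{Bp ad B}), giving $B'(\Phi(X),\Phi(Y))=B'(ad(X),ad(Y))=\langle X,Y\rangle$ for all $X,Y\in TM\oplus E$ in one stroke. Your detour through explicit conjugation $F^{-1}\partial_XF=(\sigma[\varphi])^{-1}ad(X)\sigma[\varphi]$ is correct but not needed; the paper evaluates $B'$ on the raw expressions $\tau[\varphi]\cdot ad(X)\cdot\sigma[\varphi]$ directly.

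Part (ii) has the right ingredients (total geodesy of $\mathcal{H}$ in $\widetilde{Ad}(G)$, the Levi-Civita formula (\ref{def nablao tilde}), the Killing equation, the compatibility identities) but you misidentify the mechanism that disposes of the $ad$-$ad$ commutator. You write that $[ad(X),ad(Y)]_{Cl}$ should be converted via Jacobi to $ad([X,Y])$ and then "split tangentially" using $[\mathfrak{m},\mathfrak{m}]\subset\mathfrak{h}$. That does not work: $[X,Y]$ lies in $\mathfrak{h}$, so $\langle\langle ad([X,Y])\cdot\sigma(\varphi),\varphi\rangle\rangle$ points in a direction normal to $\mathcal{H}$ \emph{inside} $\widetilde{Ad}(G)$, which is not where $II^F$ lives; if this term survived, it would contradict the total geodesy of $\mathcal{H}$. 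What actually happens is exact cancellation: the term $-\frac{\sigma}{2}\langle\langle[ad(X),ad(Y)]\cdot\varphi,\varphi\rangle\rangle$ arising from $\partial_X(F^{-1}\partial_YF)$ is killed by the Levi-Civita correction $\frac{1}{2}[F^{-1}\partial_XF,F^{-1}\partial_YF]=+\frac{\sigma}{2}\langle\langle[ad(X),ad(Y)]\cdot\varphi,\varphi\rangle\rangle$ (compare (\ref{comp1}) and (\ref{comp2})). You treat the Levi-Civita formula as an afterthought ("may also be useful"); it is in fact the essential step. After this cancellation, the remaining terms are packaged by the identity (\ref{general II nabla preserved}), namely $\nabla_X(ad(Y))+\frac{1}{2}[II(X),ad(Y)]=ad(\nabla_XY)+ad(\widetilde{II}(X,Y))$, which combines (\ref{compatibility nablao ad}) with the representation Lemmas \ref{lem1 ap1}--\ref{lem3 ap1}; your grouping as "a tangential piece plus a single commutator $[II(X)+ad(X),ad(Y)]$" does not split along these lines and obscures where (\ref{compatibility nablao ad}) enters. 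Once (\ref{general II nabla preserved}) is proved, (\ref{II preserve}) and (\ref{nabla' preserve}) follow by reading off the tangential and normal components of $ad(\nabla^o_XY)$ as you describe.
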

\noindent \textit{Proof of Lemma \ref{lem F isom immersion}:}
Let us consider $\Phi:TM\oplus E\rightarrow T\mathcal{H},$ $Z\mapsto \langle\langle ad(Z)\cdot\sigma(\varphi),\varphi\rangle\rangle$ and recall that the metric $B'$ on $Cl(\mathfrak{g})$ defined in Section \ref{section adjoint in spin} is invariant by left and right multiplication by $Spin(\mathfrak{g}).$ We thus obtain by (\ref{Bp ad B})
$$B'(\Phi(X),\Phi(Y))=B'([ad(X)],[ad(Y)])=B([X],[Y])=\langle X,Y\rangle$$
for all $X,Y\in TM\oplus E,$ which shows that $F$ is an isometric immersion (for $X,Y\in TM$), $\Phi$ maps $E$ to $E^F$ (for $X\in TM$ and $Y\in E$) and is an isometry (for $X,Y\in E$). We now prove (\ref{II preserve}): since $\mathcal{H}$ is totally geodesic in $\widetilde{Ad}(G),$ we have that
\begin{equation}\label{IIff df}
II^F(F_*X,F_*Y)=\{\widetilde\nabla^o_X(\partial_YF)\}^N,
\end{equation}
where $\widetilde\nabla^o$ stands for the Levi-Civita connection in $\widetilde{Ad}(G)$ and the upper-script $\{.\}^N$ means that we take the component of the vector which is normal to $F$ in $\mathcal{H}.$ Let us first note that, by (\ref{def nablao tilde}), for $X\in TM$ and $Y\in\Gamma(TM),$
\begin{equation}\label{nabla dF}
F^{-1}\widetilde\nabla^o_X(\partial_YF)=\partial_X(F^{-1}\partial_YF)+\frac{1}{2}\left[F^{-1}\partial_XF,F^{-1}\partial_YF\right]
\end{equation}
where the bracket is here the commutator in the Clifford algebra $Cl(\mathfrak{g}).$ We compute the first term in the right hand side: by  (\ref{F^-1dF}) and (\ref{killing equation}) we get 
\begin{eqnarray}
\partial_X\left(F^{-1}\partial_YF\right)&=&-\sigma\partial_X\langle\langle ad(Y)\cdot\varphi,\varphi\rangle\rangle\nonumber\\
&=&-\sigma\left\{\langle\langle \nabla_X(ad(Y))\cdot\varphi,\varphi\rangle\rangle+\langle\langle ad(Y)\cdot\nabla_X\varphi,\varphi\rangle\rangle+\langle\langle ad(Y)\cdot\varphi,\nabla_X\varphi\rangle\rangle\right\}\nonumber\\
&=&-\sigma\langle\langle\{ \nabla_X(ad(Y))+\frac{1}{2}\left(II(X)\cdot ad(Y)-ad(Y)\cdot II(X)\right)\}\cdot\varphi,\varphi\rangle\rangle\label{comp1}\\
&&-\frac{\sigma}{2}\langle\langle \left\{ad(X)\cdot ad(Y)-ad(Y)\cdot ad(X)\right\}\cdot\varphi,\varphi\rangle\rangle.\nonumber
\end{eqnarray}
The second term in the right hand side of (\ref{nabla dF}) is
\begin{equation}\label{comp2}
\frac{1}{2}\left[F^{-1}\partial_XF,F^{-1}\partial_YF\right]=\frac{\sigma}{2}\langle\langle \left\{ad(X)\cdot ad(Y)-ad(Y)\cdot ad(X)\right\}\cdot\varphi,\varphi\rangle\rangle,
\end{equation}
and cancels with the last term in (\ref{comp1}). (\ref{nabla dF}) thus implies that
\begin{equation}\label{nabla dF simplified}
\widetilde\nabla^o_X(\partial_YF)=\langle\langle\{ \nabla_X(ad(Y))+\frac{1}{2}\left(II(X)\cdot ad(Y)-ad(Y)\cdot II(X)\right)\}\cdot\sigma(\varphi),\varphi\rangle\rangle.
\end{equation}
Let us note the following formula: for $X\in TM$ and $Y\in\Gamma(TM\oplus E),$
\begin{equation}\label{general II nabla preserved}
\nabla_X(ad(Y))+\frac{1}{2}\left(II(X)\cdot ad(Y)-ad(Y)\cdot II(X)\right)=ad(\nabla_XY)+ad(\widetilde{II}(X,Y)).
\end{equation}
We postpone its proof to the end of the section. Formula (\ref{II preserve}) then follows: for $Y\in\Gamma(TM),$ $\nabla_XY$ belongs to $TM$ and the term $\langle\langle ad(\nabla_XY)\cdot\sigma(\varphi),\varphi\rangle\rangle$ is tangent to the immersion; moreover,  $\widetilde{II}(X,Y)=II(X,Y)$ in that case, and $\langle\langle ad(II(X,Y))\cdot\sigma(\varphi),\varphi\rangle\rangle$ is normal to the immersion. So (\ref{IIff df}) together with (\ref{nabla dF simplified}) and (\ref{general II nabla preserved}) yields 
\begin{eqnarray*}
II(F_*X,F_*Y)&=&\left\{\widetilde{\nabla}_X^o(\partial_YF)\right\}^N\\
&=&\langle\langle\left\{ad(\nabla_XY)+ad(II(X,Y))\right\}\cdot\sigma(\varphi),\varphi\rangle\rangle^N\\
&=&\Phi(II(X,Y)),
\end{eqnarray*}
which is (\ref{II preserve}). We similarly prove (\ref{nabla' preserve}): by definition, we have
$${\nabla'}^F_{X}\Phi(Z)=\{\widetilde\nabla^o_X(\Phi(Z))\}^N$$
with
\begin{equation}\label{nabla Phi(Z)}
F^{-1}\widetilde\nabla^o_X(\Phi(Z))=\partial_X(F^{-1}\Phi(Z))+\frac{1}{2}\left[F^{-1}\partial_XF,F^{-1}\Phi(Z)\right].
\end{equation}
Since $F^{-1}\Phi(Z)=-\sigma\langle\langle \widetilde{ad}(Z)\cdot\varphi,\varphi\rangle\rangle,$ we respectively compute the first and the second terms in the right hand side of (\ref{nabla Phi(Z)}) as in (\ref{comp1}) and (\ref{comp2}), and easily get
$$\widetilde\nabla^o_X(\Phi(Z))=\langle\langle \{\nabla_X(ad(Z))+\frac{1}{2}\left(II(X)\cdot ad(Z)-ad(Z)\cdot II(X)\right)\}\cdot\sigma(\varphi),\varphi\rangle\rangle.$$
By (\ref{general II nabla preserved}) with $Y=Z\in\Gamma(E)$ we have the formula
$$\nabla_X(ad(Z))+\frac{1}{2}\left(II(X)\cdot ad(Z)-ad(Z)\cdot II(X)\right)=ad(\nabla'_XZ)-ad(II^*(X,Z)).$$
Formula (\ref{nabla' preserve}) follows since $\langle\langle ad(\nabla'_XZ)\cdot\sigma(\varphi),\varphi\rangle\rangle$ and $\langle\langle ad(II^*(X,Z))\cdot\sigma(\varphi),\varphi\rangle\rangle$ are respectively normal and tangent to the immersion. Let us finally prove (\ref{general II nabla preserved}). By Lemmas \ref{lem1 ap1} and \ref{lem2 ap1} in the appendix, the term $\frac{1}{2}\nabla_X(ad(Y))\in Cl_{\Sigma}$ represents the endomorphism of $(TM\oplus E)^{\C}$
$$U\mapsto \nabla_X(ad(Y))(U)=\nabla_X(ad(Y)(U))-ad(Y)(\nabla_XU)$$
and the term $\frac{1}{4}\left(II(X)\cdot ad(Y)-ad(Y)\cdot II(X)\right)$ the endomorphism
$$U\mapsto \widetilde{II}(X,ad(Y)(U))-ad(Y)(\widetilde{II}(X,U)).$$
Thus, setting $\nabla_X^oT=\nabla_XT+\widetilde{II}(X,T),$ the sum represents the endomorphism
$$U\mapsto \nabla_X^o(ad(Y)(U))-ad(Y)(\nabla^o_XU);$$
this is the map $U\mapsto ad(\nabla_X^oY)(U)$ by the compatibility assumption (\ref{compatibility nablao ad}), which is represented by $\frac{1}{2}ad(\nabla^o_XY)$ in $Cl_\Sigma,$ and the result follows.

\section{Fundamental equations of Gauss, Ricci and Codazzi and a fundamental theorem in $G/H$}\label{section fundamental equations}
\subsection{Fundamental equations of Gauss, Ricci and Codazzi}\label{section fundamental equations 1}
Let us first recall the fundamental equations of the submanifold theory: for a submanifold $M$ of a Riemannian manifold $\overline{M}$,  if $\overline{R}$ denotes the curvature of $\overline{M}$ and if $R^T$ and $R^N$ denote the curvatures of the connections on $TM$ and $E,$ we have, for all $X,Y,Z\in\Gamma(TM)$ and $N\in\Gamma(E),$ 
\begin{enumerate}
\item the Gauss equation
\begin{equation}\label{gauss equation} 
(\overline{R}(X,Y)Z)^T=R^T(X,Y)Z-II^*(X,II(Y,Z))+II^*(Y,II(X,Z)),
\end{equation}
\item the Ricci equation
\begin{equation}\label{ricci equation}
(\overline{R}(X,Y)N)^N=R^N(X,Y)N-II(X,II^*(Y,N))+II(Y,II^*(X,N)),
\end{equation}
\item the Codazzi equation
\begin{equation}\label{codazzi equation}
(\overline{R}(X,Y)Z)^N=\widetilde{\nabla}_X II(Y,Z)-\widetilde{\nabla}_Y II(X,Z);
\end{equation}
\end{enumerate}
in these formulas $II:TM\times TM\rightarrow E$ is the second fundamental form of $M$ in $\overline{M},$ $II^*:TM\times E\rightarrow TM$ is such that
$$\langle II(X,Y),N\rangle=\langle Y,II^*(X,N)\rangle$$
for all $X,Y\in TM$ and $N\in E,$ and $\widetilde{\nabla}$ denotes the natural connection on $T^*M\otimes T^*M\otimes E.$
\\

Let us show that these three equations are contained in Equation (\ref{killing equation}). We assume that a solution $\varphi$ of (\ref{killing equation}) is given and we compute the curvature $\mathcal{R}$ of the spinorial connection. Let us fix a point $x_o\in M,$ and assume that $X,Y\in\Gamma(TM)$ are vector fields in the neighborhood of $x_o$ such that $\nabla X=\nabla Y=0$ at $x_o.$ Using (\ref{killing equation}) twice, we get
\begin{eqnarray*}
\nabla_X(\nabla_Y\varphi)&=&\left\{-\frac{1}{2}\nabla_XII(Y)+\frac{1}{4}II(Y)\cdot II(X)\right.\\
&&\left.+\frac{1}{4}ad(Y)\cdot ad(X)+\frac{1}{4}(II(Y)\cdot ad(X)-ad(Y)\cdot II(X)\right\}\cdot\varphi
\end{eqnarray*}
and
\begin{eqnarray}
\mathcal{R}(X,Y)\varphi&=&\nabla_X(\nabla_Y\varphi)-\nabla_Y(\nabla_X\varphi)\nonumber\\
&=&\left(\mathcal{A}+\mathcal{B}+\mathcal{C}+\mathcal{D}\right)\cdot\varphi\label{R ABCD}
\end{eqnarray}
with
$$\mathcal{A}=\frac{1}{2}\left(\nabla_YII(X)-\nabla_XII(Y)\right),$$
$$\mathcal{B}=\frac{1}{4}\left(II(Y)\cdot II(X)-II(X)\cdot II(Y)\right),$$
$$\mathcal{C}=\frac{1}{4}\left(ad(Y)\cdot ad(X)-ad(X)\cdot ad(Y)\right)$$
and
$$\mathcal{D}=\frac{1}{4}\left\{(II(Y)\cdot ad(X)-ad(X)\cdot II(Y))-(II(X)\cdot ad(Y)-ad(Y)\cdot II(X))\right\}.$$
Moreover, the left hand side of (\ref{R ABCD}) is 
$$\mathcal{R}(X,Y)\varphi=\frac{1}{2}\left(R^T(X,Y)+R^N(X,Y)\right)\cdot \varphi.$$
We may identify the Clifford coefficients in (\ref{R ABCD}) since $\varphi$ is represented by an element belonging to $Spin(\mathfrak{g})$ and thus invertible in $Cl(\mathfrak{g})$, and deduce
$$\frac{1}{2}R^T(X,Y)+\frac{1}{2}R^N(X,Y)=\mathcal{A}+\mathcal{B}+\mathcal{C}+\mathcal{D}.$$
Let us first note that the terms $R^T(X,Y), R^N(X,Y), \mathcal{A}, \mathcal{B}, \mathcal{C}$ belong to $\Lambda^2(TM\oplus E),$ whereas $\mathcal{D}$ belongs to $i\Lambda^2(TM\oplus E):$ thus $\mathcal{D}=0,$ that is $II$ and $ad$ satisfy the symmetry property
\begin{equation}\label{sym II ad}
II(X)\cdot ad(Y)-ad(Y)\cdot II(X)=II(Y)\cdot ad(X)-ad(X)\cdot II(Y)
\end{equation}
for all $X,Y\in TM.$ This identity is in fact a consequence of (\ref{compatibility II ad}) since $II$ is assumed to be symmetric. According to Lemmas \ref{lem1 ap1} and \ref{lem3 ap1} in the appendix, we then note that $\frac{1}{2}R^T(X,Y)\in\Lambda^2TM$ and $\frac{1}{2}R^N(X,Y)\in\Lambda^2E$ represent respectively the transformations 
$$Z\in TM\mapsto R^T(X,Y)Z\ \in TM \hspace{.3cm}\mbox{and}\hspace{.3cm}N\in E\mapsto R^N(X,Y)(N)\in E,$$
$\mathcal{A}\in TM\otimes E$ represents the transformation
$$Z\in TM\mapsto \widetilde{\nabla}_Y II(X,Z)-\widetilde{\nabla}_X II(Y,Z)\in E,$$
$\mathcal{B}\in\Lambda^2TM\oplus\Lambda^2E$ represents the transformation
$$Z\in TM\mapsto II^*(X,II(Y,Z))-II^*(Y,II(X,Z))\in TM$$
together with
$$N\in E\mapsto II(X,II^*(Y,N))-II(Y,II^*(X,N))\in E$$
(see also the calculations in \cite{BRZ} Lemma 4.2), and $\mathcal{C}\in \Lambda^2(TM\oplus E)$ represents the curvature $\overline{R}(X,Y)$ of the ambient manifold $G/H$ and may be decomposed into a sum of three terms
$$\mathcal{C}=\mathcal{C}^T+\mathcal{C}^N+\mathcal{C}'\ \in\ \Lambda^2TM\ \oplus\ \Lambda^2E\ \oplus\ TM\otimes E$$
representing respectively 
$$Z\mapsto (\overline{R}(X,Y)Z)^T,\hspace{.3cm} N\mapsto (\overline{R}(X,Y)N)^N\hspace{.3cm} \mbox{and}\hspace{.3cm} Z\mapsto (\overline{R}(X,Y)Z)^N.$$ 
The equations of Gauss, Ricci and Codazzi then easily follow.
\subsection{A fundamental theorem in $G/H$}
We suppose that $M,$ the vector bundle $E,$ the spinor bundles $\Sigma$ and $U\Sigma,$ the Clifford bundle $Cl_{\Sigma}$ and the map $ad\in (TM\oplus E)^*\otimes Cl_{\Sigma}$ are constructed as in Section \ref{section abstract setting}. We set, for all $X,Y\in TM,$ 
\begin{equation}\label{def Rbar}
\frac{1}{2}\overline{R}(X,Y):=\frac{1}{4}\left(ad(Y)\cdot ad(X)-ad(X)\cdot ad(Y)\right).
\end{equation}
It belongs to $\Lambda^2(TM\oplus E)\subset Cl_{\Sigma},$ and may alternatively be regarded as a map
$$Z\ \in TM\oplus E\ \mapsto\ \overline{R}(X,Y)Z\ \in TM\oplus E.$$
\begin{thm}\label{thm fundamental}
Let us assume that $II:TM\times TM\rightarrow E$ is bilinear, symmetric and satisfies the equations of Gauss, Ricci and Codazzi  (\ref{gauss equation})-(\ref{codazzi equation}) together with the compatibility condition (\ref{compatibility nablao ad}). Then there exists an isometric immersion $F:M\rightarrow \mathcal{H}$ and a bundle morphism $\Phi:E\rightarrow T\mathcal{H}$ which identifies $E$ to the normal bundle of $F$ into $\mathcal{H}$ and maps $II$ and $\nabla'$ to the second fundamental form and the normal connection of $F$ in $\mathcal{H}.$ Moreover, $F$ and $\Phi$ are unique, up to the action of an isometry of $\mathcal{H}.$ 
\end{thm}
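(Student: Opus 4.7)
The plan is to deduce Theorem \ref{thm fundamental} from Theorem \ref{main result} by producing a solution $\varphi\in\Gamma(U\Sigma)$ of the Killing type equation (\ref{killing equation}); the immersion $F$ and the bundle morphism $\Phi$ will then be supplied by formula (\ref{weierstrass}) and by Lemma \ref{lem F isom immersion}. To this end I introduce on $\Sigma$ the modified connection
$$\widetilde\nabla_X\psi:=\nabla_X\psi+\tfrac{1}{2} II(X)\cdot\psi+\tfrac{1}{2} ad(X)\cdot\psi,$$
whose parallel sections are exactly the solutions of (\ref{killing equation}).

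The main step is to verify that $\widetilde\nabla$ is flat, and this is precisely the computation of Section \ref{section fundamental equations 1} read in reverse. At a point where $\nabla X=\nabla Y=0$, the curvature $\widetilde{\mathcal R}(X,Y)\psi$ of $\widetilde\nabla$ equals $\bigl(\mathcal R(X,Y)-\mathcal A-\mathcal B-\mathcal C-\mathcal D\bigr)\cdot\psi$ with $\mathcal A,\mathcal B,\mathcal C,\mathcal D$ as there and $\mathcal R(X,Y)=\tfrac{1}{2}\bigl(R^T(X,Y)+R^N(X,Y)\bigr)$. The compatibility hypothesis (\ref{compatibility nablao ad}) (equivalently (\ref{compatibility II ad})) kills $\mathcal D$; writing $\mathcal C=\mathcal C^T+\mathcal C^N+\mathcal C'$ and using the identifications of Clifford bivectors with skew endomorphisms of $TM\oplus E$ recalled in Section \ref{section fundamental equations 1}, the Gauss, Ricci and Codazzi equations (\ref{gauss equation})--(\ref{codazzi equation}) successively cancel $\mathcal R-\mathcal A-\mathcal B-\mathcal C$. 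Hence $\widetilde{\mathcal R}\equiv 0$.

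Passing to the universal cover of $M$ if necessary, I then fix $x_0\in M$, choose an initial unit spinor $\varphi_0\in U\Sigma_{x_0}$, and extend it by $\widetilde\nabla$-parallel transport to a global section $\varphi$ of $\Sigma$. The delicate point is to check that this parallel transport stays in the sub-bundle $U\Sigma$ whose fiber is $\widetilde{Ad}(G)$. The cleanest way is to work directly with $F:=\langle\langle\sigma(\varphi),\varphi\rangle\rangle$ valued a priori in $Spin(\mathfrak g)$: since $II(X)\in\Lambda^2(TM\oplus E)$ and $ad(X)\in i\Lambda^2(TM\oplus E)$, Lemma \ref{lem F-1dF} still yields $\partial_XF=\langle\langle ad(X)\cdot\sigma(\varphi),\varphi\rangle\rangle\in T_F\widetilde{Ad}(G)$; combined with $F(x_0)\in\mathcal H$ and the totally geodesic character of $\mathcal H$ inside $\widetilde{Ad}(G)$ (Section \ref{section model adjoint}), one concludes $F(M)\subset\mathcal H$ and hence $\varphi\in\Gamma(U\Sigma)$. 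Theorem \ref{main result} then produces the required $F:M\to\mathcal H$ together with the bundle morphism $\Phi:E\to E^F$ intertwining $II$ and $II^F$ and $\nabla'$ and ${\nabla'}^F$.

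For the uniqueness statement, if $F$ and $F'$ are two such immersions with associated spinor fields $\varphi,\varphi'$ then both are $\widetilde\nabla$-parallel solutions of (\ref{killing equation}); at the base point they differ by the right multiplication (\ref{def right action}) by some element $a\in\widetilde{Ad}(G)$, and since that right action commutes with the left-acting Killing equation, one gets $\varphi'=\varphi\cdot a$ everywhere. Substituting in (\ref{weierstrass}) produces $F'=a^*Fa$, which is the isometry of $\mathcal H$ induced by left multiplication by $a^*$ on $G/H$ (cf.\ Section \ref{section model adjoint}); the same uniqueness applied to the deck transformations of $\pi_1(M)$ allows descent from the universal cover to $M$. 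The step I expect to require the most care is the verification that $\varphi$ remains in $U\Sigma$, and carrying it out through the totally geodesic embedding $\mathcal H\subset\widetilde{Ad}(G)$ appears to be the most efficient route.
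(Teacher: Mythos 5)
Your proposal coincides with the paper's at the structural level: the modified connection $\overline\nabla$ (your $\widetilde\nabla$), flatness via Gauss--Ricci--Codazzi and (\ref{sym II ad}), parallel transport, and the uniqueness discussion. The critical point you correctly flag---verifying that the parallel section stays in $U\Sigma$ rather than just in $\Sigma$---is, however, not handled correctly, and the way you propose to close it is circular.

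The paper closes this step by regarding $\overline\nabla$ as a connection on the \emph{principal} $\widetilde{Ad}(G)$-bundle $U\Sigma$; a flat principal connection has parallel local sections, valued in the fiber $\widetilde{Ad}(G)$ by construction. For this interpretation to be licit the connection form of $\overline\nabla$ must take values in $\widetilde{ad}(\mathfrak g)$, and this is precisely what (\ref{compatibility nablao ad}) provides beyond merely killing the $\mathcal D$-term in the curvature: in a local frame the condition says that the connection form of $\nabla^o=\nabla+\widetilde{II}$ is a $\C$-linear derivation of $\mathfrak g$; since $\mathfrak g$ is semi-simple all derivations are inner, and since this endomorphism preserves $\mathfrak m$ it lies in $ad(\mathfrak h)$; adding the $\tfrac12 ad(X)\in\widetilde{ad}(\mathfrak m)$ term places the full form of $\overline\nabla$ in $\widetilde{ad}(\mathfrak g)$. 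Your argument does not use this reduction: you parallel-transport inside $\Sigma$ and then try to infer a posteriori from $F=\langle\langle\sigma(\varphi),\varphi\rangle\rangle$ that $\varphi$ remains in $U\Sigma$. But in a frame $\partial_X F=\tau[\varphi]\,\widetilde{ad}(\underline X)\,\sigma[\varphi]$, and to conclude that this lies in $T_F\widetilde{Ad}(G)=F\cdot\widetilde{ad}(\mathfrak g)$ you need $\sigma[\varphi]^{-1}\widetilde{ad}(\underline X)\,\sigma[\varphi]\in\widetilde{ad}(\mathfrak g)$, i.e.\ that $\sigma[\varphi]$ conjugates $\widetilde{ad}(\mathfrak g)$ into itself; for a general $[\varphi]\in Spin(\mathfrak g)$ this is not automatic and is essentially what one wants to establish. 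The appeal to the totally geodesic character of $\mathcal H$ inside $\widetilde{Ad}(G)$ does not rescue the argument either, since that property constrains geodesics of $\widetilde{Ad}(G)$, not an arbitrary curve whose velocity is tangent to $\widetilde{Ad}(G)$ at one point. A minor slip in the uniqueness step: $\langle\langle\sigma(\varphi\cdot a),\varphi\cdot a\rangle\rangle=\tau(a)F\sigma(a)=a^{-1}F\sigma(a)$, not $a^*Fa=\sigma(a)^{-1}Fa$; these agree only when $\sigma(a)=a$. Fixing the reduction step along the lines above makes the rest of your proof go through.
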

\begin{proof}
We first observe that Equation (\ref{killing equation}) is solvable: setting 
$$\overline{\nabla}_X\varphi:=\nabla_X\varphi+\frac{1}{2}II(X)\cdot\varphi+\frac{1}{2}ad(X)\cdot\varphi$$
for all $X\in TM,$ the computations in Section \ref{section fundamental equations 1} show that the equations of Gauss, Ricci and Codazzi and (\ref{sym II ad}) (which is a consequence of (\ref{compatibility nablao ad})) are exactly the equations traducing that the curvature of $\overline{\nabla}$ is zero. If we interpret $\overline{\nabla}$ as a connection on the principal bundle $U\Sigma$ (of group $\widetilde{Ad}(G)$), we see that this is also equivalent to the existence of a section $\varphi\in \Gamma(U\Sigma)$ such that $\overline{\nabla}\varphi=0,$ i.e. of a solution of (\ref{killing equation}). Moreover the solution is unique up to the right action of the group $\widetilde{Ad}(G)$ on $U\Sigma.$ The formulas (\ref{weierstrass}) and (\ref{def Phi}) then give the immersion $F$ and the bundle morphism $\Phi;$ the proof of Theorem \ref{main result} in Section \ref{section proof main result} then proves the required properties.  Finally, if $\varphi$ is a solution of (\ref{killing equation}) and $a$ belongs to $\widetilde{Ad}(G),$ then the immersion corresponding to the solution $\varphi\cdot a$ of (\ref{killing equation}) is 
$$\langle\langle\sigma(\varphi\cdot a),\varphi\cdot a\rangle\rangle=\tau(a)\langle\langle\sigma(\varphi),\varphi\rangle\rangle\sigma(a)$$ 
whereas the identification between the bundle $E$ and the normal bundle of the immersion in $\mathcal{H}$ is
$$Z\mapsto\langle\langle Z\cdot\sigma(\varphi\cdot a),\varphi\cdot a\rangle\rangle=\tau(a)\langle\langle Z\cdot\sigma(\varphi),\varphi\rangle\rangle\sigma(a);$$
the action of $a\in\widetilde{Ad}(G)$ on a solution of (\ref{killing equation}) thus corresponds to the composition by the isometry $x\mapsto \tau(a)x\sigma(a)$ of $\mathcal{H}$ (see Section \ref{section model adjoint}), which proves the last claim in the theorem. 
\end{proof}

\section{The special case of $\HH^3=SL_2(\C)/SU(2)$}\label{section H3}
\subsection{Representation of a surface in $\HH^3$}\label{section H3 morel}
We recover here the result of Morel \cite{Mo} concerning the spinorial representation of a general surface in $\HH^3.$ 
\subsubsection{Groups and spinors using the complex quaternions} 
We consider the complex quaternions
$$\HH^{\C}=\{z_0+z_1I+z_2J+z_3K,\ z_0,z_1,z_2,z_3\in\C\}$$
where $I,J,K$ are such that
$$I^2=J^2=K^2=-1,\hspace{.5cm} IJ=K.$$
The set of the usual quaternions
$$\HH:=\{x_0+x_1I+x_2J+x_3K,\ x_0,x_1,x_2,x_3\in\R\}$$
naturally belongs to $\HH^{\C}.$ If $H$ is the complex bilinear map $\HH^{\C}\times\HH^{\C}\rightarrow\C$ such that
$$H(z,z)=z_0^2+z_1^2+z_2^2+z_ 3^2$$
for all $z\in\HH^{\C},$ the map
\begin{eqnarray*}
M_2(\C)&\rightarrow& \HH^{\C}\\
M=\left(\begin{array}{cc}a&b\\c&d\end{array}\right)&\mapsto&z_M:=\frac{1}{2}\left((a+d)+i(d-a)I+(b-c)J-i(b+c)K\right)
\end{eqnarray*}
is an isomorphism of algebras such that $\det M=H(z_M,z_M);$ it thus identifies $SL_2(\C)$ to the complex 3-sphere
$$\mathbb{S}^3_{\C}:=\{z\in\HH^{\C}:\ H(z,z)=1\}$$
and $SU(2)$ to the real 3-sphere
$$\mathbb{S}^3:=\{z\in\HH:\ H(z,z)=1\}.$$
Setting $G=\mathbb{S}^3_{\C}$ and $H=\mathbb{S}^3,$ we have $\mathfrak{g}=\mathfrak{h}\oplus\mathfrak{m}$ with
$$\mathfrak{g}=\C I\oplus \C J\oplus\C K\simeq\C^3,\hspace{.5cm}\mathfrak{h}=\R I\oplus \R J\oplus\R K\simeq \R^3\hspace{.3cm}\mbox{and}\hspace{.3cm}\mathfrak{m}=i\mathfrak{h}\simeq i\R^3.$$
Let us consider 
$$B(z,z)=-4(z_1^2+z_2^2+z_3^2)$$
(half the Killing form) for all $z=z_1I+z_2J+z_3K\in\mathfrak{g}.$ The map
\begin{eqnarray*}
\Psi:\hspace{.5cm}\mathfrak{g}\ \subset\HH^{\C}&\rightarrow& \HH^{\C}(2)\\
z&\mapsto&2\left(\begin{array}{cc}-iz&0\\0&iz\end{array}\right)
\end{eqnarray*}
satisfies the Clifford property
$$\Psi(z)^2=-B(z,z)\left(\begin{array}{cc}1&0\\0&1\end{array}\right)$$
for all $z\in\mathfrak{g}.$ It identifies the orthonormal basis $e_1=\frac{i}{2}I,$ $e_2=\frac{i}{2}J,$ $e_3=\frac{i}{2}K$ of $\mathfrak{m}$ with the following matrices 
$$e_1\simeq \left(\begin{array}{cc}I&0\\0&-I\end{array}\right),\ e_2\simeq \left(\begin{array}{cc}J&0\\0&-J\end{array}\right),\ e_3\simeq \left(\begin{array}{cc}K&0\\0&-K\end{array}\right),$$
and also
$$Cl(\mathfrak{g})\simeq\left\{\left(\begin{array}{cc}a&0\\0&b\end{array}\right),\ a,b\in\HH^{\C}\right\}$$
with
$$Cl^o(\mathfrak{g})\simeq\left\{\left(\begin{array}{cc}a&0\\0&a\end{array}\right),\ a\in\HH^{\C}\right\},\hspace{.5cm}Cl^{1}(\mathfrak{g})\simeq\left\{\left(\begin{array}{cc}a&0\\0&-a\end{array}\right),\ a\in\HH^{\C}\right\}.$$
Let us note that the operations $\sigma$ and $\tau$ on $Cl^o(\mathfrak{g})$ are given here by 
$$\sigma(a)=\overline{a_0}+\overline{a_1}I+\overline{a_2}J+\overline{a_3}K\hspace{.5cm}\mbox{and}\hspace{.5cm}\tau(a)=\overline{a}=a_0-a_1I-a_2J-a_3K$$
for all $a=a_0+a_1I+a_2J+a_3K\in \HH^{\C},$ and also that the spinor bundle $\Sigma^o:=\widetilde{Q}\times_{\rho_{\mathfrak{m}}} Cl^o(\mathfrak{g})$ splits into
\begin{equation}\label{splitting spinors n=2}
\Sigma^o=\Sigma^+\oplus\Sigma^-
\end{equation}
where $\Sigma^+$ and $\Sigma^-$ correspond to the decomposition in left ideals
$$\HH^{\C}=(\C\oplus\C J)(1-iI)\oplus(\C\oplus\C J)(1+iI).$$
\subsubsection{The spinorial representation of a surface}
The Lie brackets of the vectors of the orthonormal basis $(e_1,e_2,e_3)$ of $\mathfrak{m}$ are given by
$$[e_2,e_3]=ie_1,\ [e_3,e_1]=ie_2,\ [e_1,e_2]=ie_3,$$
which yields, for $X=x_1e_1+x_2e_2+x_3e_3,$ $x_1,x_2,x_3\in\C,$
\begin{eqnarray*}
ad(X)&=&\frac{1}{2}\sum_{i=1}^3 e_i\cdot ad(X)(e_i)\\
&=&\frac{1}{2}\sum_{i=1}^3 e_i\cdot [X,e_i]\\
&=&i(x_1e_2\cdot e_3+x_2e_3\cdot e_1+x_3e_1\cdot e_2)\\
&=&-X\cdot\omega_{\C}
\end{eqnarray*}
with $\omega_{\C}=ie_1\cdot e_2\cdot e_3.$  We now assume that $M$ is a surface in $\HH^3=SL_2(\C)/SU(2)=\mathbb{S}^3_{\C}/\mathbb{S}^3,$ represented by a spinor field $\varphi\in\Gamma(U\Sigma),$ as in Theorem \ref{main result}; we moreover suppose that $(e_1,e_2,e_3)$ is an orthonormal frame such that $e_1$ is normal and $e_2,e_3$ are tangent to $M.$ Thus
$$e_2\cdot ad(e_2)+e_3\cdot ad(e_3)=2\omega_{\C},$$
and, setting
$$D\varphi:=e_2\cdot\nabla_{e_2}\varphi+e_3\cdot\nabla_{e_3}\varphi,$$
the trace of the Killing-type equation (\ref{killing equation}) yields the Dirac equation
\begin{equation}\label{dirac phi H3}
D\varphi=\vec{H}\cdot\varphi-\omega_{\C}\cdot\varphi.
\end{equation}
Let us denote by $\Sigma M$ the usual spinor bundle on $M$ ($\dim_{\C}\Sigma M=2$), and use that there is an identification
\begin{eqnarray*}
\Sigma M &\rightarrow&\Sigma^+_{|M}\\
\psi&\mapsto&\psi^*
\end{eqnarray*}
such that $(X\cdot \psi)^*=X\cdot e_1\cdot\psi^*$ for all $X\in TM.$
\begin{prop}\label{prop rep morel}
If $\varphi\in\Gamma(U\Sigma)$ is a solution of (\ref{killing equation}), the spinor field $\psi\in\Gamma(\Sigma M)$ corresponding to $\varphi^+$ is a solution of the Dirac equation
\begin{equation}\label{eqn dirac H3}
D\psi=H\psi-\overline{\psi}
\end{equation}
such that
\begin{equation}\label{eqn normalisation H3}
\partial_X|\psi|^2=-\Re e\langle X\cdot\overline{\psi},\psi\rangle
\end{equation}
for all $X\in TM.$ Moreover $\psi$ never vanishes.
\end{prop}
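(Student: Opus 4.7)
The plan is to project equation (\ref{dirac phi H3}) onto the summand $\Sigma^+$ of (\ref{splitting spinors n=2}) and then translate it back to $\Sigma M$ via the identification $\psi\leftrightarrow\psi^*$. First I observe that $\Sigma^\pm=\HH^{\C}(1\mp iI)/2$ are left ideals of $Cl^o(\mathfrak{g})\simeq\HH^{\C}$, so they are preserved by left Clifford multiplication by any section of $Cl_\Sigma$. Since the connection $\nabla$ on $\Sigma_{|M}$ induced by the Levi-Civita connection on $TM$ and the normal connection on $E$ acts through Clifford multiplication by 2-forms, the splitting is $\nabla$-parallel, and likewise preserved by $D=e_2\cdot\nabla_{e_2}+e_3\cdot\nabla_{e_3}$; in particular (\ref{dirac phi H3}) projects to $D\varphi^+=\vec{H}\cdot\varphi^+-\omega_{\C}\cdot\varphi^+$.

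Using $(X\cdot\psi)^*=X\cdot e_1\cdot\psi^*$ together with the compatibility of the identification with $\nabla$, a short calculation gives $(D\psi)^*=-e_1\cdot D\varphi^+$, the minus sign arising from the anticommutation of $e_1$ with $e_2,e_3$. Plugging in the projected Dirac equation and using $e_1^2=-1$ then yields $(D\psi)^*=H\psi^*+e_1\omega_{\C}\cdot\psi^*$, so (\ref{eqn dirac H3}) will follow once $e_1\omega_{\C}$ is identified, in its action on $\Sigma^+$, with the operator representing $-\overline{(\cdot)}$ on $\Sigma M$. This is where the main technical obstacle lies: using the matrix isomorphism of Section 8.1.1 one finds $e_1\omega_{\C}=ie_2\cdot e_3$, whose left action on the basis $\{(1-iI)/2,\ J(1-iI)/2\}$ of $\Sigma^+$ must be matched to the antilinear quaternionic conjugation on $\Sigma M$; this reduces to bookkeeping with the relations $IJ=K$, $I^2=-1$, etc.\ in $\HH^{\C}$.

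For (\ref{eqn normalisation H3}) I would differentiate $|\psi|^2$ and use that the Hermitian product on $\Sigma M$ is parallel to write $\partial_X|\psi|^2=2\Re e\langle\nabla_X\psi,\psi\rangle$, then substitute the projected Killing equation $\nabla_X\psi^*=-\frac{1}{2}(II(X)+ad(X))\cdot\psi^*$ obtained from (\ref{killing equation}). The contribution of $II(X)\in\Lambda^2(TM\oplus E)$ is skew-Hermitian for the natural pairing and vanishes upon taking real parts, while the contribution of $ad(X)\in i\Lambda^2(TM\oplus E)$ is Hermitian and, via the same identification $e_1\omega_{\C}\leftrightarrow\overline{(\cdot)}$ together with $ad(X)=-X\cdot\omega_{\C}$, produces exactly $-\Re e\langle X\cdot\overline{\psi},\psi\rangle$. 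Finally, $\psi$ never vanishes: since $\varphi\in U\Sigma$ the local representative $[\varphi]$ lies in $\widetilde{Ad}(G)\subset Spin(\mathfrak{g})$ and is in particular invertible in $Cl(\mathfrak{g})$, whereas $\Sigma^-$ is a proper left ideal of $Cl^o(\mathfrak{g})\simeq M_2(\C)$ and hence contains no invertible element; so $\varphi^+\neq 0$ pointwise.
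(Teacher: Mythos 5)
Your treatment of the Dirac equation and of the non-vanishing of $\psi$ follows essentially the same route as the paper: you project (\ref{dirac phi H3}) to $\Sigma^+$, transport to $\Sigma M$ via $(X\cdot\psi)^*=X\cdot e_1\cdot\psi^*$, and identify the $\Sigma^-$-avoidance with invertibility of $[\varphi]\in Spin(\mathfrak{g})$. One caveat: you state $e_1\omega_{\C}=ie_2\cdot e_3$, but $e_1\cdot\omega_{\C}=e_1\cdot(ie_1\cdot e_2\cdot e_3)=-ie_2\cdot e_3$, and the paper's identification is $ie_2\cdot e_3\cdot\psi=\overline{\psi}$; your text then flips the sign of that identification between the Dirac step and the normalization step, so the ``bookkeeping'' you defer is exactly where the signs must be nailed down. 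Also ``preserved by $D$'' should be read with care: Clifford multiplication by $e_2,e_3$ moves $Cl^o$ to $Cl^1$; what is used is that left multiplication in $Cl(\mathfrak{g})\simeq M_2(\HH^{\C})$ preserves the left ideals $\HH^{\C}(1\mp iI)$, so the equation projects.

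For the normalization (\ref{eqn normalisation H3}) your method is genuinely different from the paper's. The paper projects the Weierstrass identity $\partial_XF=\langle\langle ad(X)\cdot\sigma(\varphi),\varphi\rangle\rangle$ from Lemma \ref{lem F-1dF} onto the $\Sigma^+$ component and evaluates both sides in explicit $\HH^{\C}$ coordinates, obtaining $\langle\langle\sigma(\varphi^+),\varphi^+\rangle\rangle=2|\psi|^2(1+iI)$ and $\langle\langle ad(X)\cdot\sigma(\varphi^+),\varphi^+\rangle\rangle=-2\,\Re e\langle X\cdot\overline{\psi},\psi\rangle(1+iI)$; this fixes all signs with no reference to the Hermitian metric on $\Sigma M$. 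Your route via $\partial_X|\psi|^2=2\,\Re e\langle\nabla_X\psi,\psi\rangle$ and the Killing equation is more conceptual and in principle works, but it depends on (i) the identification $\psi\mapsto\psi^*$ being Hermitian and $\nabla$-parallel, neither of which is stated in the paper, and (ii) a careful sign chase. Tracking it through with the paper's conventions, $ad(X)\cdot\psi^*=-X\cdot\omega_{\C}\cdot\psi^*=-X\cdot e_1\cdot(ie_2\cdot e_3\cdot\psi)^*=-(X\cdot\overline{\psi})^*$, so $\nabla_X\psi=-\tfrac12 S(X)\cdot\psi+\tfrac12 X\cdot\overline{\psi}$ and one lands on $\partial_X|\psi|^2=+\Re e\langle X\cdot\overline{\psi},\psi\rangle$, opposite to (\ref{eqn normalisation H3}). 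This discrepancy is almost certainly a conventions issue (how Morel's $\overline{\psi}$ and Hermitian product are normalized relative to the identification), but as written your argument would produce the wrong sign; you would need to pin down the Hermitian structure on $\Sigma^+$ explicitly, at which point you are back to the coordinate computation the paper performs. So the approach buys conceptual clarity at the price of exactly the sign-sensitive calculation it was meant to avoid, and in its current form the normalization step is not established.
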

Equation (\ref{eqn dirac H3}) together with (\ref{eqn normalisation H3}) form the spinorial characterization of the immersion of a surface in $\HH^3$ given by Morel in \cite{Mo}.
\begin{proof}
The Dirac equation (\ref{dirac phi H3}) implies that
$$e_2\cdot\nabla_{e_2}\psi^*+e_3\cdot\nabla_{e_3}\psi^*=He_1\cdot \psi^*-ie_1\cdot e_2\cdot e_3\cdot\psi^*$$
which gives
$$e_2\cdot e_1\cdot\nabla_{e_2}\psi^*+e_3\cdot e_1\cdot\nabla_{e_3}\psi^*=H\psi^*-ie_2\cdot e_3\cdot\psi^*$$
and
$$(e_2\cdot\nabla_{e_2}\psi+e_3\cdot\nabla_{e_3}\psi)^*=(H\psi-ie_2\cdot e_3\cdot\psi)^*,$$
that is (\ref{eqn dirac H3}) since $ie_2\cdot e_3\cdot\psi=\overline{\psi}.$ Equation (\ref{formula lem dF}) reads
$$\partial_X\langle\langle\sigma(\varphi),\varphi\rangle\rangle=\langle\langle ad(X)\cdot\sigma(\varphi),\varphi\rangle\rangle,$$
which implies that
\begin{equation}\label{pf eqn H3 1}
\partial_X\langle\langle\sigma(\varphi^+),\varphi^+\rangle\rangle=\langle\langle ad(X)\cdot\sigma(\varphi^+),\varphi^+\rangle\rangle.
\end{equation}
The correspondence between $\psi$ and $\varphi^+$ is explicitly given in coordinates by
$$[\psi]=z_1+jz_2\ \in\HH\ \mapsto\ [\varphi^+]=(z_1+iz_2J)(1-iI)\ \in\HH^{\C}.$$
Since
$$\sigma{[\varphi^+]}=(\overline{z_1}-i\overline{z_2}J)(1+iI),\hspace{.5cm}\tau[\varphi^+]=\overline{[\varphi^+]}=(1+iI)(z_1-iz_2J),$$
we easily get
\begin{equation}\label{pf eqn H3 2}
\langle\langle\sigma(\varphi^+),\varphi^+\rangle\rangle=\overline{[\varphi^+]}\sigma{[\varphi^+]}=2(|z_1|^2+|z_2|^2)(1+iI)
\end{equation}
and also
\begin{eqnarray}
\langle\langle ad(X)\cdot\sigma(\varphi^+),\varphi^+\rangle\rangle&=&\overline{[\varphi^+]}[ad(X)]\sigma{[\varphi^+]}\nonumber\\
&=&(1+iI)(z_1-iz_2J)(ix_2J+ix_3K)(\overline{z_1}-i\overline{z_2}J)(1+iI)\nonumber\\
&=&-4\Re e\left\{(x_2-ix_3)z_1\overline{z_2}\right\}(1+iI).\label{pf eqn H3 3}
\end{eqnarray}
Moreover, since $[\psi]=z_1+jz_2,$ we have
$$|\psi|^2=|z_1|^2+|z_2|^2\hspace{.5cm}\mbox{and}\hspace{.5cm}\Re e\langle X\cdot\overline{\psi},\psi\rangle=2\Re e\left\{(x_2-ix_3)z_1\overline{z_2}\right\}.$$
Equations (\ref{pf eqn H3 1}), (\ref{pf eqn H3 2}) and (\ref{pf eqn H3 3}) give (\ref{eqn normalisation H3}). Finally, $\psi\simeq\varphi^+$ does not vanish since 
$$H([\varphi],[\varphi])=2H([\varphi^+],[\varphi^-])=1\neq 0.$$
\end{proof}
\begin{rem}\label{rmk rep H3}
i) The explicit representation formula gives here an immersion in the hyperboloid model of $\HH^3$ in the four-dimensional Minkowski space $\R^{1,3}:$ for
$$[\varphi]=z_0+z_1I+z_2J+z_3K\ \in \mathbb{S}^3_{\C},$$
we get by a direct computation
\begin{eqnarray*}
F&=&\tau[\varphi]\sigma{[\varphi]}\\
&=&X_0+iX_1I+iX_2J+iX_3K\hspace{.3cm}\in\ \R\oplus i\R I\oplus i\R J\oplus i\R K
\end{eqnarray*}
with
$$X_0=|z_0|^2+|z_1|^2+|z_2|^2+|z_3|^2,\hspace{.3cm} iX_1=z_0\overline{z_1}-z_1\overline{z_0}-z_2\overline{z_3}+z_3\overline{z_2},$$
$$iX_2=z_0\overline{z_2}-z_2\overline{z_0}+z_1\overline{z_3}-z_3\overline{z_1},\hspace{.3cm} iX_3=z_0\overline{z_3}-z_3\overline{z_0}-z_1\overline{z_2}+z_2\overline{z_1}.$$
We have $X_0>0$ and 
$$-X_0^2+X_1^2+X_2^2+X_3^2=-1$$
as a direct consequence of $H([\varphi],[\varphi])=1.$
\\
\\ii) Theorem \ref{thm fundamental} is here the usual fundamental theorem in $\HH^3.$ Indeed, the various assumptions made in Section \ref{section abstract setting} are satisfied: firstly, the existence of a reduction $\widetilde{Q}_H$ of the bundle $\widetilde{Q}$ is not an additional requirement since $H=SU(2)=Spin(\mathfrak{m})$ for $\mathfrak{m}=\R^3;$ secondly, (\ref{compatibility nablao ad}) or equivalently (\ref{compatibility II ad}) and (\ref{compatibility nabla ad}) are satisfied. Let us first see that (\ref{compatibility II ad}) holds: let us denote by $S:TM\rightarrow TM$ the symmetric operator such that $II(X,Y)=\langle S(X),Y\rangle e_1$ for all $X,Y\in TM;$ we have
\begin{eqnarray*}
\frac{1}{2}II(X)&=&\frac{1}{2}\left(e_2\cdot II(X,e_2)+e_3\cdot II(X,e_3)\right)\\
&=&\frac{1}{2}\left(\langle S(X),e_2\rangle e_2+\langle S(X),e_3\rangle e_3\right)\cdot e_1\\
&=&\frac{1}{2}S(X)\cdot e_1
\end{eqnarray*}
and 
\begin{eqnarray*}
\frac{1}{2}II(X)\cdot \frac{1}{2}ad(Y)-\frac{1}{2}ad(Y)\cdot \frac{1}{2}II(X)&=&\frac{1}{4}\left\{S(X)\cdot e_1\cdot(-Y\cdot\omega_{\C})-(-Y\cdot\omega_{\C})\cdot S(X)\cdot e_1 \right\}\\
&=&\frac{1}{4}(S(X)\cdot Y+Y\cdot S(X))\cdot e_1 \cdot\omega_{\C}\\
&=&-\frac{1}{2}\langle S(X),Y\rangle\ e_1\cdot\omega_{\C}\\
&=&\frac{1}{2}ad(II(X,Y))
\end{eqnarray*}
which is (\ref{compatibility II ad}) written in the Clifford bundle. We now show that  (\ref{compatibility nabla ad}) holds, i.e. that
$$\nabla_X(ad(Y)(Z))=ad(Y)(\nabla_XZ)+ad(\nabla_XY)(Z)$$
for all $X\in TM$ and $Y,Z\in\Gamma(TM\oplus E),$ or equivalently, in the Clifford bundle,
$$\nabla_X(ad(Y)\cdot Z-Z\cdot ad(Y))=ad(Y)\cdot\nabla_XZ-\nabla_XZ\cdot ad(Y)+ad(\nabla_XY)\cdot Z-Z\cdot ad(\nabla_XY).$$
But this amounts to show that $\nabla_X(ad(Y))=ad(\nabla_XY),$ that is $\nabla_X(Y\cdot \omega_{\C})=\nabla_XY\cdot\omega_{\C},$ which is evident since $\omega_{\C}=ie_1\cdot e_2\cdot e_3$ and $\nabla\omega_{\C}=0$ (since $\nabla e_1=0$ and $\nabla (e_2\cdot e_3)=0$). Finally, the tensor $\overline{R}$ in (\ref{def Rbar}) represents the curvature tensor $R^0$ of $\HH^3:$ we have 
$$\frac{1}{2}\overline{R}(X,Y)=\frac{1}{4}\left(ad(Y)\cdot ad(X)-ad(X)\cdot ad(Y)\right)=\frac{1}{4}\left(X\cdot Y-Y\cdot X\right)$$
and thus
\begin{eqnarray*}
R^0(X,Y)Z&:=&-\langle Y,Z\rangle X+\langle X,Z\rangle Y\\
&=&-\frac{1}{2}\langle Y,Z\rangle X+\frac{1}{2}\langle X,Z\rangle Y-\frac{1}{2}X\langle Y,Z\rangle +\frac{1}{2}Y\langle X,Z\rangle\\
&=&\frac{1}{4}\left\{(Y\cdot Z+Z\cdot Y)\cdot X-(X\cdot Z+Z\cdot X)\cdot Y\right.\\
&&\left.+X\cdot (Y\cdot Z+Z\cdot Y)-Y\cdot (X\cdot Z+Z\cdot X)\right\}\\
&=&\frac{1}{4}\left\{Z\cdot Y\cdot X-Z\cdot X\cdot Y+X\cdot Y\cdot Z-Y\cdot X\cdot Z\right\}\\
&=&\frac{1}{2}\overline{R}(X,Y)\cdot Z-Z\cdot \frac{1}{2}\overline{R}(X,Y),
 \end{eqnarray*}
 which implies the result by Lemma \ref{lem1 ap1} in the appendix.
 \end{rem}
\subsection{Weierstrass-type representation of surfaces with constant mean curvature $1$ in $\HH^3$}\label{section H3 CMC1}
We suppose that $M$ is a surface in $\HH^3,$ with constant mean curvature $1,$ represented by a spinor field $\varphi,$ as in Theorem \ref{main result} and the previous section. We choose a conformal parameter $z=x+iy$ of $M:$ the metric reads $\displaystyle{\lambda^2(dx^2+dy^2)}$ for some positive function $\lambda.$ We fix a unit section $e_1$ of the trivial line bundle $E,$ and consider the component $g=[\varphi]\in \mathbb{S}^3_{\C}$ of the spinor field $\varphi$ in a spinorial frame above the frame $(e_1,\frac{1}{\lambda}\partial_x,\frac{1}{\lambda}\partial_y).$ Since the surface has constant mean curvature 1, the matrix of the real second fundamental form $\langle II,e_1\rangle$ in $(\frac{1}{\lambda}\partial_x,\frac{1}{\lambda}\partial_y)$ is of the form
$$\left(\begin{array}{cc}\alpha+1&\gamma\\\gamma&1-\alpha\end{array}\right)$$
for some real functions $\alpha,\gamma.$ By a direct computation, Equation (\ref{killing equation}) reads
\begin{equation}\label{eqn bryant 1}
dg\ g^{-1}=\eta+\frac{i}{2}\lambda dz J(1+iI)
\end{equation}
where
\begin{equation}\label{eqn bryant 2}
\eta=\frac{1}{2\lambda}\left(\partial_y\lambda\ dx-\partial_x\lambda\ dy\right)I-\frac{\lambda}{2}\left\{\left(\gamma\ dx-\alpha\ dy\right)J-\left(\alpha\ dx+\gamma\ dy\right)K\right\};
\end{equation}
the first term in the right hand side of (\ref{eqn bryant 2}) represents the Levi-Civita connection 1-form  in $(\frac{1}{\lambda}\partial_x,\frac{1}{\lambda}\partial_y),$ and the second term represents the traceless part of the second fundamental form; the last term in (\ref{eqn bryant 1}) is the sum of the contributions of $ad(X)$ and of the trace of the second fundamental form (we use here that the mean curvature is constant equal to $1$). If $h: M\rightarrow \mathbb{S}^3$ is a solution of
\begin{equation}\label{eqn bryant 3}
d\overline{h}\ h=\eta
\end{equation}
(this equation is solvable in $\mathbb{S}^3$ since $\eta$ has real coefficients and satisfies the structure equation $d\eta-\eta\wedge\eta=0$ (by a computation using (\ref{eqn bryant 2}), or by Remark \ref{rem ngeq3} below)), it directly follows from (\ref{eqn bryant 1}) that $v=hg$ is a solution of
\begin{equation}\label{eqn bryant 4}
dv\ v^{-1}=\frac{i}{2}\lambda dz\ h J(1+iI)\overline{h}.
\end{equation}
This implies that $dv$ is $\C$-linear, and thus that $v$ is holomorphic. This also implies that $H(dv,dv)=0.$ Moreover, the immersion is explicitly given by
\begin{equation}\label{eqn bryant 5}
F=\langle\langle\sigma(\varphi),\varphi\rangle\rangle=g^{-1}\ \sigma(g)=v^{-1}\ \sigma(v),
\end{equation}
since $g=h^{-1}v$ with $h \sigma(h^{-1})=1$ ($h$ belongs to $\mathbb{S}^3,$ which reads $\sigma(h)=h$). The explicit representation formula (\ref{eqn bryant 5}) where $v$ is holomorphic and such that $H(dv,dv)=0$ is the Bryant representation of the surfaces with constant mean curvature $1$ in $\HH^3.$
\begin{rem}
The function $h:M\rightarrow \mathbb{S}^3$ may be interpreted as the component of a spinor field $\psi\in\Sigma M$ in a spinorial frame above $(\frac{1}{\lambda}\partial_x,\frac{1}{\lambda}\partial_y).$ Equations (\ref{eqn bryant 2}) and (\ref{eqn bryant 3}) show that $\psi$ represents an isometric immersion of $M$ in $\R^3;$ this is a minimal immersion, since the second fundamental form of the immersion in $\R^3$ is given by the traceless part of the second fundamental form of the immersion in $\HH^3$ (see the expression (\ref{eqn bryant 2}) of $\eta$ appearing in (\ref{eqn bryant 3})). The transformation $\varphi\mapsto\psi$ corresponds to a Lawson-type correspondence between surfaces with constant mean curvature $1$ in $\HH^3$ and minimal surfaces in $\R^3.$
\end{rem}
\section{The symmetric space $SL_n(\C)/SU(n),$ $n\geq 3$}\label{section n geq 3}
We first write Equation (\ref{killing equation}) in another form: if $z=x+iy$ is a conformal parameter of $M,$ the 1-form $ad\in\Gamma(T^*M\otimes Cl_{\Sigma})$ may be written
\begin{eqnarray*}
ad&=&ad(\partial_z)dz+ad(\partial_{\overline{z}})d\overline{z}\\
&=&2ad(\partial_z)dz+\left\{ad(\partial_{\overline{z}})d\overline{z}-ad(\partial_z)dz\right\}
\end{eqnarray*}
where the last term reads
$$ad(\partial_{\overline{z}})d\overline{z}-ad(\partial_z)dz=iad(-dy\partial_x+dx\partial_y)=i ad\circ J$$
where $J:TM\rightarrow TM$ is the natural complex structure and is therefore a 1-form with values in $\Lambda^2(TM\oplus E)\subset Cl_{\Sigma}$ (recall that $ad(X)$ maps $TM\oplus E$ to $i(TM\oplus E),$ see Section \ref{section abstract setting}); if we consider the connection
\begin{equation}
\overline{\nabla}':=\nabla+\frac{1}{2}II+\frac{i}{2}ad\circ J,
\end{equation}
where $II$ is also regarded here as a 1-form with values in $\Lambda^2(TM\oplus E)\subset Cl_{\Sigma}$, Equation (\ref{killing equation}) may thus be written in the form 
\begin{equation}\label{nablap ad}
\overline{\nabla}'\varphi=-ad(\partial_z)dz\cdot\varphi.
\end{equation}

The connection $\overline{\nabla}'$ is a connection on $\Sigma,$ and may also be considered as a connection on the $Spin(\mathfrak{m})$ principal bundle 
$$\widetilde{Q}=\widetilde{Q}_{p,q}\times_{r_{p,q}} Spin(\mathfrak{m})$$ 
constructed from the $Spin(p)\times Spin(q)$ principal bundle $\widetilde{Q}_{p,q}$ and the morphism 
$$r_{p,q}:\ Spin(p)\times Spin(q)\rightarrow Spin(\mathfrak{m}).$$
Let us recall that $F$ is an immersion of $M$ into $Spin(\mathfrak{g});$ the right Gauss map is defined here by 
$$\nu_R:=\left[\partial_zF\ F^{-1}\right]\hspace{.3cm}\in\mathbb{P}(\Lambda^2\mathfrak{g})$$
where $\mathbb{P}(\Lambda^2\mathfrak{g})$ is the complex projective space of $\Lambda^2\mathfrak{g}$ (it is similar to the definition in \cite{KTUY}).
\begin{prop}\label{prop nuR holo}
The right Gauss map is holomorphic if and only if the connection $\overline{\nabla}'$ is flat.
\end{prop}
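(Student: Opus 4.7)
The plan is to relate $\partial_{\bar z}(\partial_z F\cdot F^{-1})$ to the curvature of $\overline{\nabla}'$ via the explicit Weierstrass formula for $F$. First, from $F=\tau[\varphi]\sigma[\varphi]$ combined with Lemma \ref{lem F-1dF} and $F^{-1}=\sigma[\varphi]^{-1}[\varphi]$, a short calculation in any local spinorial frame gives
$$\xi := \partial_z F\cdot F^{-1} \;=\; [\varphi]^{-1}\,ad(\partial_z)\,[\varphi].$$
Differentiating in $\bar z$, substituting the Killing equation (\ref{killing equation}) for $\partial_{\bar z}[\varphi]$ and using the parallelism $\nabla_X ad=0$ (compatibility condition (\ref{compatibility nabla ad})) to rewrite the derivative of the section $ad(\partial_z)$, the Levi-Civita connection terms cancel and one obtains, after simplification,
$$\partial_{\bar z}\xi \;=\; [\varphi]^{-1}\,\overline{\nabla}'_{\partial_{\bar z}}(ad(\partial_z))\,[\varphi].$$
Next, applying $\overline{\nabla}'_{\partial_{\bar z}}$ to (\ref{nablap ad}) and invoking Leibniz together with invertibility of $[\varphi]$ in $Cl(\mathfrak{g})$, I would identify $\overline{\nabla}'_{\partial_{\bar z}}(ad(\partial_z))$ with $R^{\overline{\nabla}'}(\partial_z,\partial_{\bar z})$ as elements of $Cl_\Sigma$, yielding the key formula
$$\partial_{\bar z}\xi \;=\; [\varphi]^{-1}\,R^{\overline{\nabla}'}(\partial_z,\partial_{\bar z})\,[\varphi].$$

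From this formula, the implication ``$\overline{\nabla}'$ flat $\Rightarrow\nu_R$ holomorphic'' is immediate: $R^{\overline{\nabla}'}=0$ gives $\partial_{\bar z}\xi=0$, so $\xi$, and hence $\nu_R=[\xi]$, is holomorphic. For the converse, holomorphy of $\nu_R$ means $\partial_{\bar z}\xi=\lambda\,\xi$ for some smooth $\lambda:M\to\C$, which via the formula above is equivalent to
$$R^{\overline{\nabla}'}(\partial_z,\partial_{\bar z}) \;=\; \lambda\cdot ad(\partial_z)\quad\text{in }Cl_\Sigma,$$
and the task reduces to showing $\lambda=0$.

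The hard part will be this type-analysis. The preparation is to notice that the connection $1$-form of $\overline{\nabla}'$ takes values in the real Lie subalgebra $\Lambda^2\mathfrak{h}\subset Cl(\mathfrak{g})$: indeed $II(X)\in\Lambda^2\mathfrak{m}=\Lambda^2\mathfrak{h}$ (equal as real subspaces of $\Lambda^2\mathfrak{g}$), $ad(\mathfrak{m})\subset i\Lambda^2\mathfrak{h}$ gives $\tfrac{i}{2}ad(JX)\in\Lambda^2\mathfrak{h}$, and the Levi-Civita piece lies there as well. Hence $R^{\overline{\nabla}'}(\partial_x,\partial_y)\in\Lambda^2\mathfrak{h}$, so by $\C$-bilinear extension $R^{\overline{\nabla}'}(\partial_z,\partial_{\bar z})=\tfrac{i}{2}R^{\overline{\nabla}'}(\partial_x,\partial_y)\in i\Lambda^2\mathfrak{h}$. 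On the other hand, writing $ad(\partial_x)=i\alpha_x$ and $ad(\partial_y)=i\alpha_y$ with $\alpha_x,\alpha_y\in\Lambda^2\mathfrak{h}$ linearly independent over $\R$ (by injectivity of $ad$ on $\mathfrak{g}$), the element $ad(\partial_z)=\tfrac12\alpha_y+\tfrac{i}{2}\alpha_x$ has nonzero components in both $\Lambda^2\mathfrak{h}$ and $i\Lambda^2\mathfrak{h}$. Comparing the $\Lambda^2\mathfrak{h}$-components in $R^{\overline{\nabla}'}(\partial_z,\partial_{\bar z})=\lambda\,ad(\partial_z)$ then yields $\Re(\lambda)\alpha_y=\Im(\lambda)\alpha_x$, and the linear independence of $\alpha_x,\alpha_y$ forces $\lambda=0$. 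Thus $R^{\overline{\nabla}'}(\partial_z,\partial_{\bar z})=0$, which since $M$ is a surface is equivalent to flatness of $\overline{\nabla}'$.
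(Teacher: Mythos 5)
Your proof is correct, and the backbone is the same as the paper's: both reduce holomorphy of $\nu_R$ to the vanishing of $\partial_{\bar z}\xi$, identify $\partial_{\bar z}\xi$ with the Clifford coefficient of $R^{\overline{\nabla}'}(\partial_z,\partial_{\bar z})\cdot\varphi$, dispatch the easy implication immediately, and isolate the coefficient $\lambda$ in the hard direction via a ``type'' argument. The execution differs in two places, though, and both differences are worth noting. First, you pass directly through the identity
$R^{\overline{\nabla}'}(\partial_z,\partial_{\bar z})\cdot\varphi=\overline{\nabla}'_{\partial_{\bar z}}(ad(\partial_z))\cdot\varphi$
(exploiting $\overline{\nabla}'_{\partial_{\bar z}}\varphi=0$ and that Clifford multiplication is parallel), whereas the paper separates this into Lemma~\ref{lem nuR holo} and recomputes $\overline{\nabla}'_{\partial_{\bar z}}(ad(\partial_z))$ explicitly from the Killing equation and (\ref{general II nabla preserved}); your shortcut is cleaner and equivalent. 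Second, your type analysis is structurally the same as the paper's use of the involution $*=\sigma\circ\tau$: the $*$-invariant bivectors are precisely $i\Lambda^2\mathfrak{h}$, and the paper's assertion that both sides are $*$-invariant is exactly your observation that the curvature of the $Spin(\mathfrak{m})$-connection $\overline{\nabla}'$ lies in $i\Lambda^2\mathfrak{h}$. Where the two genuinely diverge is the closing step: the paper extracts $\lambda=0$ by pairing with $ad(\partial_{\bar z})$ under the bilinear form $B$, using the conformality identity $B(ad(\partial_{\bar z}),ad(\partial_{\bar z}))=0$ together with $B(ad(\partial_z),ad(\partial_{\bar z}))<0$; you instead decompose $ad(\partial_z)=\tfrac12\alpha_y+\tfrac{i}{2}\alpha_x$ and use the $\R$-linear independence of $\alpha_x,\alpha_y$, which only requires the injectivity of $ad$ on $\mathfrak{g}$ (semi-simplicity). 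Both work, and yours avoids invoking the metric computation (\ref{B ad zb 0}).

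One small precision to add for completeness: when you write ``holomorphy of $\nu_R$ means $\partial_{\bar z}\xi=\lambda\xi$,'' you should say this is a local statement with $\lambda=\partial_{\bar z}\log\mu$ where $\xi=\mu\Phi$, $\Phi$ holomorphic and $\mu$ a smooth nonvanishing function — as the paper does — since $\lambda$ need not be defined globally (though this is immaterial, as the conclusion $\lambda=0$ is local and $R^{\overline{\nabla}'}(\partial_z,\partial_{\bar z})=0$ is then a well-defined global statement).
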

This relies on the following lemma:
\begin{lem}\label{lem nuR holo}
$\nu_R$ is holomorphic if and only so is $\langle\langle ad(\partial_z)\cdot\varphi,\varphi\rangle\rangle.$ 
\end{lem}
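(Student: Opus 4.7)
The plan is to reduce the lemma to the Clifford-algebra identity
$$\partial_z F\cdot F^{-1}=\langle\langle ad(\partial_z)\cdot\varphi,\varphi\rangle\rangle \in \Lambda^2\mathfrak{g}.$$
Once this is in hand the statement is immediate: by construction $\nu_R=[\partial_z F\cdot F^{-1}]$, so $\nu_R$ and $\langle\langle ad(\partial_z)\cdot\varphi,\varphi\rangle\rangle$ are the projective class and the canonical lift of the same object, and their holomorphicity is equivalent.

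To derive the identity I would work in a local trivialisation of $\widetilde{Q}_H$ around a given point, writing $\varphi=[s,[\varphi]]$ with $[\varphi]\in\widetilde{Ad}(G)\subset Spin(\mathfrak{g})$. Formula \eqref{formula lem dF} from the proof of Theorem \ref{main result} then reads
$$\partial_X F=\langle\langle ad(X)\cdot\sigma(\varphi),\varphi\rangle\rangle=\tau[\varphi]\,[ad(X)]\,\sigma[\varphi]$$
for every $X\in TM$, while $F=\tau[\varphi]\sigma[\varphi]$. The Spin identity $\tau(a)\,a=1$, applied to both $[\varphi]$ and $\sigma[\varphi]$ (using that $\sigma$ preserves $Spin(\mathfrak{g})$ and commutes with $\tau$, which follows from \eqref{def sigma clifford} and the fact that $\sigma$ restricts to the symmetry of $\mathfrak{g}$), yields
$$F^{-1}=\tau\sigma[\varphi]\cdot[\varphi],\qquad\sigma[\varphi]\cdot\tau\sigma[\varphi]=1.$$
Multiplying $\partial_X F$ by $F^{-1}$ on the right and collapsing the middle factor gives
$$\partial_X F\cdot F^{-1}=\tau[\varphi]\,[ad(X)]\,\sigma[\varphi]\cdot\tau\sigma[\varphi]\,[\varphi]=\tau[\varphi]\,[ad(X)]\,[\varphi]=\langle\langle ad(X)\cdot\varphi,\varphi\rangle\rangle,$$
and specialising to $X=\partial_z$ is the desired identity.

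The main obstacle, if any, is purely bookkeeping: one has to track the order of the non-commuting factors in $Cl(\mathfrak{g})$ and the interplay between the anti-automorphism $\tau$ and the automorphism $\sigma$, so that the Spin cancellation $\sigma[\varphi]\cdot\tau\sigma[\varphi]=1$ is visible in the right place. There is no genuine analytic or geometric step here; the lemma is in essence the algebraic identification of the projective right Gauss map with its natural spinorial lift to $\Lambda^2\mathfrak{g}$.
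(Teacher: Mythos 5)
Your algebraic identity $\partial_zF\,F^{-1}=\langle\langle ad(\partial_z)\cdot\varphi,\varphi\rangle\rangle$ is correct, and it is exactly the first step of the paper's proof (there it comes from Lemma~\ref{lem F-1dF}; your direct bookkeeping with $\tau$, $\sigma$, and the cancellation $\sigma[\varphi]\cdot\tau\sigma[\varphi]=1$ is fine). But you stop there, and in doing so you miss the entire substance of the lemma. The identity yields only \emph{one} implication: if $\langle\langle ad(\partial_z)\cdot\varphi,\varphi\rangle\rangle$ is holomorphic as a $\Lambda^2\mathfrak{g}$-valued map, then its projective class $\nu_R$ is holomorphic. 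The converse is \emph{not} automatic. A map $f:M\to V\setminus\{0\}$ can have holomorphic projectivization $[f]:M\to\mathbb{P}(V)$ without $f$ itself being holomorphic: all one can conclude from $\nu_R$ holomorphic is that $\partial_zF\,F^{-1}=\mu\Phi$ for some holomorphic $\Phi:M\to\Lambda^2\mathfrak{g}$ and some merely smooth $\mu:M\to\C^*$. Your phrase ``their holomorphicity is equivalent'' silently assumes that this $\mu$ is holomorphic, which is precisely what needs to be proved.

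The paper's proof spends most of its effort on exactly this gap. Starting from $\partial_zF\,F^{-1}=\mu\Phi$, it differentiates $\langle\langle ad(\partial_z)\cdot\varphi,\varphi\rangle\rangle$ in $\overline z$ using the Killing equation (\ref{killing equation}) and the identity (\ref{general II nabla preserved}), compares with $\partial_{\overline z}(\log\mu)\,\langle\langle ad(\partial_z)\cdot\varphi,\varphi\rangle\rangle$, and cancels $\varphi$ to obtain an equation in $Cl(\mathfrak{g})$. The right-hand side of that equation is invariant under $*=\sigma\circ\tau$, which forces $\overline{\partial_{\overline z}(\log\mu)}\,ad(\partial_{\overline z})=\partial_{\overline z}(\log\mu)\,ad(\partial_z)$; pairing with $ad(\partial_{\overline z})$ via the form $B$, using $B(ad(\partial_{\overline z}),ad(\partial_{\overline z}))=0$ (conformality of $z$) and $B(ad(\partial_z),ad(\partial_{\overline z}))\neq 0$, gives $\partial_{\overline z}(\log\mu)=0$ and hence that $\mu$ is holomorphic. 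This step uses the Killing equation, the compatibility condition (\ref{compatibility nablao ad}), the involution $\sigma\circ\tau$, and the bilinear form $B$ on $Cl(\mathfrak{g})$ — so it is emphatically a genuine analytic and geometric step, contrary to your final claim. Your argument, as written, proves only one of the two implications.
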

\begin{proof}
We adapt in our context ideas of \cite{KTUY}. Let us first note that, by Lemma \ref{lem F-1dF},
$$\partial_zF\ F^{-1}=\langle\langle ad(\partial_z)\cdot\varphi,\varphi\rangle\rangle\hspace{.3cm} \in\Lambda^2\mathfrak{g},$$
which shows that $\nu_R$ is holomorphic if so is $\langle\langle ad(\partial_z)\cdot\varphi,\varphi\rangle\rangle.$ We now assume that $\nu_R$ is holomorphic: there exist an holomorphic function $\Phi:M\rightarrow\Lambda^2\mathfrak{g}$ and a smooth function $\mu:M\rightarrow\C^*$ such that
\begin{equation}\label{def mu}
\partial_zF\ F^{-1}=\mu\Phi.
\end{equation}
Since
$$\nabla_{\partial_{\overline{z}}}\varphi=-\frac{1}{2}II(\partial_{\overline{z}})\cdot\varphi-\frac{1}{2}ad(\partial_{\overline{z}}),$$
we have that
\begin{eqnarray}
\partial_{\overline{z}}\langle\langle ad(\partial_z)\cdot\varphi,\varphi\rangle\rangle&=&\langle\langle\nabla_{\partial_{\overline{z}}}(ad(\partial z))\cdot\varphi,\varphi\rangle\rangle\label{nuR holo interm1}\\
&&+\frac{1}{2}\langle\langle\left\{II(\partial_{\overline{z}})\cdot ad(\partial_z)-ad(\partial_z)\cdot II(\partial_{\overline{z}})\right\}\cdot\varphi,\varphi\rangle\rangle\nonumber\\
&&+\frac{1}{2}\langle\langle\left\{ad(\partial_{\overline{z}})\cdot ad(\partial_z)-ad(\partial_z)\cdot ad(\partial_{\overline{z}})\right\}\cdot\varphi,\varphi\rangle\rangle.\nonumber
\end{eqnarray}
But formula (\ref{general II nabla preserved}) readily gives that
\begin{equation}\label{nuR holo interm2}
\nabla_{\partial_{\overline{z}}}(ad(\partial z))+\frac{1}{2}\left\{II(\partial_{\overline{z}})\cdot ad(\partial_z)-ad(\partial_z)\cdot II(\partial_{\overline{z}})\right\}=ad(\nabla^o_{\partial_{\overline{z}}}\partial z)
\end{equation}
where we have set
$$\nabla^o_{\partial_{\overline{z}}}\partial z:=\nabla_{\partial_{\overline{z}}}\partial z+II(\partial_{\overline{z}},\partial_z).$$
Since we also have by (\ref{def mu})
$$\partial_{\overline{z}}\langle\langle ad(\partial_z)\cdot\varphi,\varphi\rangle\rangle=\partial_{\overline{z}}(\log\mu)\ \langle\langle ad(\partial_z)\cdot\varphi,\varphi\rangle\rangle,$$
we deduce from (\ref{nuR holo interm1}) and (\ref{nuR holo interm2}) that
$$\partial_{\overline{z}}(\log\mu)\ ad(\partial_z)=ad(\nabla^o_{\partial_{\overline{z}}}\partial z)+\frac{1}{2}\left\{ad(\partial_{\overline{z}})\cdot ad(\partial_z)-ad(\partial_z)\cdot ad(\partial_{\overline{z}})\right\}.$$
But the right hand side is invariant by $*=\sigma\circ \tau,$ and so is the left hand side; we deduce that
$$\overline{\partial_{\overline{z}}(\log\mu)}\ ad(\partial_{\overline{z}})=\partial_{\overline{z}}(\log\mu)\ ad(\partial_z),$$
which implies that
\begin{eqnarray*}
\partial_{\overline{z}}(\log\mu)B(ad(\partial_z),ad(\partial_{\overline{z}}))&=&B(\partial_{\overline{z}}(\log\mu)\ ad(\partial_z),ad(\partial_{\overline{z}}))\\
&=&B(\overline{\partial_{\overline{z}}(\log\mu)}\ ad(\partial_{\overline{z}}),ad(\partial_{\overline{z}}))\\
&=&\overline{\partial_{\overline{z}}(\log\mu)}\ B(ad(\partial_{\overline{z}}),ad(\partial_{\overline{z}}))\\
&=&0
\end{eqnarray*}
since, by (\ref{Bp ad B}) and $B'=-2\lambda B$, we have
\begin{equation}\label{B ad zb 0}
-2\lambda B(ad(\partial_{\overline{z}}),ad(\partial_{\overline{z}}))=B(\partial_{\overline{z}},\partial_{\overline{z}})=B(\partial_x,\partial_x)-B(\partial_y,\partial_y)+2iB(\partial_x,\partial_y)=0
\end{equation}
(in the parameter $z=x+iy$ the metric is conformal). This in turn implies that $\partial_{\overline{z}}(\log\mu)=0$ since $B(ad(\partial_z),ad(\partial_{\overline{z}}))<0$ as a consequence of the computation
$$-2\lambda B(ad(\partial_z),ad(\partial_{\overline{z}}))=B(\partial_z,\partial_{\overline{z}})=B(\partial_x,\partial_x)+B(\partial_y,\partial_y)>0.$$
The function $\mu$ is thus holomorphic and the result follows.
\end{proof}
\noindent\textit{Proof of Proposition \ref{prop nuR holo}:}
By (\ref{nablap ad}), we have $\overline{\nabla}'_{\partial_{z}}\varphi=-ad(\partial_z)\cdot\varphi$ and $\overline{\nabla}'_{\partial_{\overline{z}}}\varphi=0,$ and thus
$$R(\partial_z,\partial_{\overline{z}})\varphi=-\overline{\nabla}'_{\partial_{\overline{z}}}\left(ad(\partial_z)\cdot\varphi\right).$$
Using again that $\overline{\nabla}'_{\partial_{\overline{z}}}\varphi=0,$ we deduce that
$$\partial_{\overline{z}}\langle\langle ad(\partial_z)\cdot\varphi,\varphi\rangle\rangle=\langle\langle\overline{\nabla}'_{\partial_{\overline{z}}}\left(ad(\partial_z)\cdot\varphi\right),\varphi\rangle\rangle=-\langle\langle R(\partial_z,\partial_{\overline{z}})\varphi,\varphi\rangle\rangle.$$
But the latter is zero if and only if $R=0:$ indeed, $R\varphi=0$ for the special section $\varphi$ if and only if $R\psi=0$ for all section $\psi\in\Gamma(\Sigma),$ since, if $R\varphi=0$ then $R(\varphi\cdot a)=0$ for all $a\in Cl(\mathfrak{g}).$
\begin{flushright}
$\Box$
\end{flushright}
We assume now that $\nu_R$ is holomorphic, and consider a spinorial frame $\tilde{s}\in \widetilde{Q}$ above $(\frac{1}{\lambda}\partial_x,\frac{1}{\lambda}\partial_y)$ together with $g\in Spin(\mathfrak{g})$ such that $\varphi=[\tilde{s},g].$ In $\tilde{s},$ (\ref{nablap ad}) reads
\begin{equation}\label{dg eta w}
dg\ g^{-1}=\eta+wdz
\end{equation}
where $\eta\in\Omega^1(M,\Lambda^2\mathfrak{m})$ and $w:M\rightarrow\Lambda^2\mathfrak{g}$ respectively represent the connection form of $\overline{\nabla}'$ and $-ad(\partial_z)$ in $\tilde{s}.$ If we consider a parallel section $\tilde{s}'=\tilde{s}.h^{-1}$ of $\widetilde{Q}$ and $v=hg\in Spin(\mathfrak{g})$ representing $\varphi$ in $\tilde{s}',$ (\ref{dg eta w}) simplifies to 
\begin{equation}\label{eqn bryant 6}
dv\ v^{-1}=dz\ h w\overline{h},
\end{equation}
which implies that $v$ is holomorphic and $B(dv,dv)=0$ (since $B(ad(\partial_z),ad(\partial_z))=0,$ as in (\ref{B ad zb 0})). Moreover, the immersion is explicitly given by
\begin{equation}\label{eqn bryant 7}
F=\langle\langle\sigma(\varphi),\varphi\rangle\rangle=g^{-1}\ \sigma(g)=v^{-1}\ \sigma(v).
\end{equation}
This is essentially the generalized Weierstrass-Bryant representation formula given in \cite{KTUY}.
\begin{rem}\label{rem ngeq3}
The function $v$ may also be constructed by a direct computational argument: the integrability of (\ref{dg eta w}) implies that
$$\left(d\eta-\eta\wedge\eta\right)(\partial_z,\partial_{\overline{z}})=\partial_{\overline{z}}w+w\eta(\partial_{\overline{z}})-\eta(\partial_{\overline{z}})w.$$
Now, since by (\ref{dg eta w}) $\partial_{\overline{z}}g=\eta(\partial_{\overline{z}})\ g$ and $\partial_{\overline{z}}\ g^{-1}=-g^{-1}\eta(\partial_{\overline{z}}),$ we get
\begin{eqnarray*}
\partial_{\overline{z}}(g^{-1}wg)&=&g^{-1}\left(\partial_{\overline{z}}w+w\eta(\partial_{\overline{z}})-\eta(\partial_{\overline{z}})w\right)g.
\end{eqnarray*}
Since $g^{-1}wg=-\langle\langle ad(\partial_z)\cdot\varphi,\varphi\rangle\rangle$ is holomorphic by Lemma \ref{lem nuR holo}, we deduce that
$$\partial_{\overline{z}}w+w\eta(\partial_{\overline{z}})-\eta(\partial_{\overline{z}})w=0.$$
We thus obtain that $d\eta-\eta\wedge\eta=0,$ and since $\eta$ takes values in $\Lambda^2\mathfrak{m}$, we may thus consider a solution $h\in Spin(\mathfrak{m})$ of $dh\ h^{-1}=-\eta.$ It is then straightforward to check from (\ref{dg eta w}) that $v=hg$ satisfies (\ref{eqn bryant 6}).
\end{rem}
\appendix
\section{Skew-symmetric operators and Clifford algebra}
We gather here results concerning the representation of skew-symmetric operators using the Clifford algebra. These results first appeared in \cite{BRZ}, but since we use here other conventions we prefer to include the proofs. We consider $\R^n$ endowed with its canonical scalar product. A skew-symmetric operator $u:\R^n\rightarrow\R^n$ naturally identifies to a bivector in $\Lambda^2\R^n,$ and thus also to an element of the Clifford algebra $Cl_n(\R).$ We precise here this identification and the relation between the Clifford product in $Cl_n(\R)$ and the composition of endomorphisms. If $\eta$ and $\eta'$ belong to the Clifford algebra $Cl_n(\R),$ we set 
$$[\eta,\eta']=\eta\cdot \eta'-\eta'\cdot \eta,$$ 
where the dot $\cdot$ is the Clifford product. We denote by $(e_1,\ldots,e_n)$ the canonical basis of $\R^n.$
\begin{lem} \label{lem1 ap1}
Let $u:\R^n\rightarrow\R^n$ be a skew-symmetric operator. Then the bivector 
\begin{equation}\label{biv rep u}
\underline{u}=\frac{1}{4}\sum_{j=1}^ne_j\cdot u(e_j)\hspace{.3cm}\in\ \Lambda^2\R^n\subset Cl_n(\R)
\end{equation}
represents $u,$ and, for all $\xi\in\R^n,$ $[\underline{u},\xi]=u(\xi).$ In the paper, and for sake of simplicity, we will denote $\underline{u}$ by $\frac{1}{2}u.$ 
\end{lem}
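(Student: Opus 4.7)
The plan is a direct, three-step verification. The content is entirely computational; the only care needed is with the sign convention $u \cdot v + v \cdot u = -2 B(u,v)$ used in this paper.

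\textbf{Step 1: $\underline{u}$ is a bivector.} First I would write $u(e_j) = \sum_k u_{kj} e_k$ with $u_{kj} = -u_{jk}$, expand
\[
\underline{u} = \tfrac{1}{4}\sum_{j,k} u_{kj}\, e_j \cdot e_k,
\]
and split the sum into $j=k$ (which vanishes because $u_{jj}=0$) and $j\neq k$ (where $e_j\cdot e_k = -e_k\cdot e_j$). Pairing $j<k$ with $j>k$ and using skew-symmetry collapses this to $\frac{1}{2}\sum_{j<k} u_{kj}\, e_j \cdot e_k \in \Lambda^2\R^n$, so $\underline{u}$ really does sit in the bivector part of $Cl_n(\R)$.

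\textbf{Step 2: A commutator identity for vectors.} The key lemma is that for vectors $a,b,c \in \R^n$,
\[
[a\cdot b,\, c] = 2\,B(a,c)\,b - 2\,B(b,c)\,a.
\]
I would prove it by pushing $c$ past $a$ and then past $b$, each time picking up the defect term $-2B(\cdot,\cdot)$ from $c\cdot a = -a\cdot c - 2 B(a,c)$ and similarly for $b$. After the two substitutions the $a\cdot b\cdot c$ terms cancel, leaving exactly the right-hand side.

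\textbf{Step 3: Apply termwise.} By linearity it suffices to check $[\underline{u},e_\ell]=u(e_\ell)$ for each canonical basis vector $e_\ell$. Using Step 2 with $a=e_j$, $b=u(e_j)$, $c=e_\ell$,
\[
[e_j \cdot u(e_j), e_\ell] = 2\,\delta_{j\ell}\, u(e_j) - 2\,\langle u(e_j), e_\ell\rangle\, e_j.
\]
Summing over $j$, the first term contributes $2u(e_\ell)$. For the second term, skew-symmetry gives $\langle u(e_j),e_\ell\rangle = u_{\ell j} = -u_{j\ell} = -\langle u(e_\ell),e_j\rangle$, so $\sum_j \langle u(e_j),e_\ell\rangle\, e_j = -u(e_\ell)$, contributing another $+2u(e_\ell)$. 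The total is $4u(e_\ell)$, and the factor $\frac{1}{4}$ in the definition of $\underline{u}$ yields $[\underline{u},e_\ell] = u(e_\ell)$.

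There is no real obstacle here; the only thing to watch is the sign in the Clifford relation, and the fact that the factor $\frac{1}{4}$ in (\ref{biv rep u}) is precisely what the two cancellations in Step 3 demand. Once the identity $[\underline{u},\xi]=u(\xi)$ is established, the identification of $\underline{u}$ with $u$ as an endomorphism follows, and the convention $\underline{u}=\frac{1}{2}u$ used in the body of the paper is consistent with the standard bivector–endomorphism correspondence.
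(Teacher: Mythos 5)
Your proof is correct. The computation in Step 2 (the vector commutator identity $[a\cdot b,c]=2B(a,c)b-2B(b,c)a$ under the convention $u\cdot v + v\cdot u = -2B(u,v)$) is verified, and the termwise application in Step 3, using the skew-symmetry $\langle u(e_j),e_\ell\rangle=-\langle e_j,u(e_\ell)\rangle$, correctly produces the factor $4$ that the normalization in $\underline{u}$ cancels.

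The route differs slightly from the paper's: the paper first reduces by linearity to the elementary skew-symmetric operator sending $e_i\mapsto e_j$, $e_j\mapsto -e_i$, $e_k\mapsto 0$ ($k\neq i,j$), identifies it with $\frac{1}{2}e_i\cdot e_j$, and checks the commutator only for that case; you instead prove a uniform commutator identity for an arbitrary $a\cdot b$ and sum over $j$. Both arguments are elementary and of comparable length. Yours has the small advantage of making the origin of the factor $\frac{1}{4}$ transparent (two cancellations each contributing a factor $2$), while the paper's reduction-to-basis argument makes the identification $\underline{u}\leftrightarrow u$ more immediately visible by matching against the generator $e_i\wedge e_j$ of $\Lambda^2\R^n$. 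Either presentation would serve; nothing is missing from yours.
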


\begin{proof}
For $i<j,$ we consider  the linear map 
$$u:\hspace{.5cm}e_i\mapsto e_j,\hspace{.5cm}e_j\mapsto -e_i,\hspace{.5cm} e_k\mapsto 0\hspace{.3cm}\mbox{if}\hspace{.3cm} k\neq i,j;$$
it is skew-symmetric and corresponds to the bivector $e_i\wedge e_j\in\Lambda^2\R^n;$ it is thus naturally represented by  $\underline{u}=\frac{1}{2}e_i\cdot e_j=\frac{1}{4}\left(e_i\cdot e_j-e_j\cdot e_i\right),$ which is (\ref{biv rep u}). We then compute, for $k=1,\ldots,n,$
$$[\underline{u},e_k]=\frac{1}{2}\left(e_i\cdot e_j\cdot e_k-e_k\cdot e_i\cdot e_j\right)$$
and easily get
$$[\underline{u},e_k]=e_j\hspace{.3cm}\mbox{if}\hspace{.3cm}k=i,\hspace{.3cm}-e_i\hspace{.3cm}\mbox{if}\hspace{.3cm}k=j,\hspace{.3cm}0\hspace{.3cm}\mbox{if}\hspace{.3cm}k\neq i,j.$$
The result follows by linearity.
\end{proof}

\begin{lem}\label{lem2 ap1}
Let $u:\R^n\rightarrow\R^n$ and $v:\R^n\rightarrow\R^n$ be two skew-symmetric operators, represented in $Cl_n(\R)$ by 
$$\underline{u}=\frac{1}{4}\sum_{j=1}^ne_j\cdot u(e_j)\hspace{.5cm}\mbox{and}\hspace{.5cm}\underline{v}=\frac{1}{4}\sum_{j=1}^ne_j\cdot v(e_j)$$
respectively. Then $[\underline{u},\underline{v}]\in\Lambda^2\R^n\subset Cl_n(\R)$ represents $u\circ v-v\circ u.$
\end{lem}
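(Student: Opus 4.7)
The plan is to verify the lemma in two steps: first show that $[\underline{u},\underline{v}]$ still lies in $\Lambda^2\mathbb{R}^n$, then identify the skew-symmetric operator it represents by testing it against an arbitrary vector $\xi\in\mathbb{R}^n$ via Lemma \ref{lem1 ap1}.

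For the first step, I would reduce by bilinearity to the case $\underline{u}=\tfrac{1}{2}e_i\cdot e_j$ and $\underline{v}=\tfrac{1}{2}e_k\cdot e_l$ with $i<j$ and $k<l$, and compute $[e_i\cdot e_j,e_k\cdot e_l]$ directly in $Cl_n(\mathbb{R})$ using the anticommutation relations $e_a\cdot e_b+e_b\cdot e_a=-2\delta_{ab}$. A case analysis on how many indices coincide shows that the commutator either vanishes or reduces to a scalar multiple of $e_p\cdot e_q$ for some $p\neq q$; in particular it always belongs to $\Lambda^2\mathbb{R}^n$.

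For the second step, the key observation is the Jacobi identity for the commutator bracket in any associative algebra: for all $A,B,C\in Cl_n(\mathbb{R})$,
\[
[[A,B],C]=[A,[B,C]]-[B,[A,C]].
\]
Applying this with $A=\underline{u}$, $B=\underline{v}$ and $C=\xi\in\mathbb{R}^n$, and invoking Lemma \ref{lem1 ap1} twice (first to rewrite $[\underline{v},\xi]=v(\xi)$ and $[\underline{u},\xi]=u(\xi)$, then to interpret $[\underline{u},v(\xi)]=u(v(\xi))$ and $[\underline{v},u(\xi)]=v(u(\xi))$, which is legitimate since $v(\xi),u(\xi)\in\mathbb{R}^n$), yields
\[
[[\underline{u},\underline{v}],\xi]=u(v(\xi))-v(u(\xi))=(u\circ v-v\circ u)(\xi).
\]
Since $[\underline{u},\underline{v}]\in\Lambda^2\mathbb{R}^n$ and acts on vectors by the bracket according to this formula, Lemma \ref{lem1 ap1} identifies it as the element representing the skew-symmetric operator $u\circ v-v\circ u$ (note $u\circ v-v\circ u$ is indeed skew-symmetric, being the commutator of two skew-symmetric operators).

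I do not anticipate a serious obstacle here: the Jacobi identity makes the computation essentially formal once Lemma \ref{lem1 ap1} is in hand, and the closure of $\Lambda^2\mathbb{R}^n$ under the Clifford commutator is a standard finite check on basis bivectors. The only mildly delicate point is keeping track of the factor $\tfrac{1}{2}$ implicit in the notation $\tfrac{1}{2}u=\underline{u}$, so that the identification of $[\underline{u},\underline{v}]$ matches the same convention as in Lemma \ref{lem1 ap1}; the verification above shows that it does.
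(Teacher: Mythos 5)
Your argument is correct and follows the same route as the paper's own proof: applying the Jacobi identity for the Clifford commutator and invoking Lemma \ref{lem1 ap1} twice to obtain $[[\underline{u},\underline{v}],\xi]=(u\circ v-v\circ u)(\xi)$. The one point where you go slightly further is your explicit verification that $[\underline{u},\underline{v}]$ lies in $\Lambda^2\R^n$, which the paper takes for granted; this is a reasonable and careful addition, since the identification of a bivector with the skew-symmetric operator it represents requires knowing the commutator has degree two.
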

\begin{proof}
For $\xi\in\R^n,$ the Jacobi equation yields
$$[[\underline{u},\underline{v}],\xi]=[\underline{u},[\underline{v},\xi]]-[\underline{v},[\underline{u},\xi]].$$
Thus, using Lemma \ref{lem1 ap1} repeatedly, $[\underline{u},\underline{v}]$ represents the map 
\begin{eqnarray*}
\xi&\mapsto&[[\underline{u},\underline{v}],\xi]
\begin{array}[t]{cc}=&[\underline{u},[\underline{v},\xi]]-[\underline{v},[\underline{u},\xi]]\\
=&[\underline{u},v(\xi)]-[\underline{v},u(\xi)]\\
=&(u\circ v-v\circ u)(\xi),
\end{array}
\end{eqnarray*}
and the result follows.
\end{proof}
We now assume that $\R^n=\R^p\oplus\R^q,$ $p+q=n.$
\begin{lem}\label{lem3 ap1}
Let us consider a linear map $u:\R^p\rightarrow\R^q$ and its adjoint $u^*:\R^q\rightarrow\R^p.$ Then the bivector
$$\underline{u}=\frac{1}{2}\sum_{j=1}^pe_j\cdot u(e_j)\hspace{.3cm}\in\ \Lambda^2\R^n\subset Cl_n(\R)$$
represents 
$$\left(\begin{array}{cc}0&-u^*\\u&0\end{array}\right):\hspace{.5cm}\R^p\oplus\R^q\rightarrow\R^p\oplus\R^q,$$
we have
\begin{equation}\label{biv u u*}
\underline{u}=\frac{1}{4}\left(\sum_{j=1}^pe_j\cdot u(e_j)+\sum_{j=p+1}^ne_j\cdot (-u^*(e_j))\right)
\end{equation}
and, for all $\xi=\xi_p+\xi_q\in\R^n,$ $[\underline{u},\xi]=u(\xi_p)-u^*(\xi_q).$ As above, we will simply denote $\underline{u}$ by $\frac{1}{2}u.$
\end{lem}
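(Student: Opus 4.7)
The plan is to deduce Lemma \ref{lem3 ap1} from Lemma \ref{lem1 ap1} by viewing the pair $(u,-u^*)$ as a single skew-symmetric operator on $\R^n = \R^p\oplus\R^q$. Concretely, set
$$\tilde u := \begin{pmatrix}0 & -u^*\\ u & 0\end{pmatrix}:\R^n\to\R^n.$$
A short verification shows $\tilde u$ is skew-symmetric: for $\xi=\xi_p+\xi_q$ and $\eta=\eta_p+\eta_q$,
$$\langle\tilde u(\xi),\eta\rangle = \langle u(\xi_p),\eta_q\rangle - \langle u^*(\xi_q),\eta_p\rangle = -\langle\xi,\tilde u(\eta)\rangle$$
by the defining relation $\langle u(x),y\rangle = \langle x,u^*(y)\rangle$.

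Applying Lemma \ref{lem1 ap1} to $\tilde u$, the bivector
$$\underline{\tilde u} := \frac{1}{4}\sum_{j=1}^n e_j\cdot\tilde u(e_j)$$
represents $\tilde u$ and satisfies $[\underline{\tilde u},\xi]=\tilde u(\xi)=u(\xi_p)-u^*(\xi_q)$ for all $\xi\in\R^n$. Splitting the sum at $j=p$ and using $\tilde u(e_j)=u(e_j)$ for $j\leq p$ and $\tilde u(e_j)=-u^*(e_j)$ for $j>p$ yields exactly formula (\ref{biv u u*}).

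It remains to verify that this agrees with the definition $\underline{u}=\frac{1}{2}\sum_{j=1}^p e_j\cdot u(e_j)$, i.e.\ that the two halves in (\ref{biv u u*}) coincide. Expanding $u(e_j)=\sum_{k=p+1}^n u_{kj}e_k$ with $u_{kj}=\langle u(e_j),e_k\rangle$, the adjoint relation gives $u^*(e_k)=\sum_{j=1}^p u_{kj}e_j$ with the same coefficients. Since $e_j$ and $e_k$ belong to complementary subspaces they anticommute in $Cl_n(\R)$, hence
$$\sum_{k=p+1}^n e_k\cdot(-u^*(e_k)) = -\sum_{\substack{1\leq j\leq p\\ p+1\leq k\leq n}}u_{kj}\,e_k\cdot e_j = \sum_{j,k}u_{kj}\,e_j\cdot e_k = \sum_{j=1}^p e_j\cdot u(e_j).$$
Thus both halves of (\ref{biv u u*}) are equal and their sum equals $\frac{1}{2}\sum_{j=1}^p e_j\cdot u(e_j) = \underline{u}$, proving the claimed identity. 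The bracket formula $[\underline u,\xi]=u(\xi_p)-u^*(\xi_q)$ then follows directly from the corresponding statement for $\tilde u$.

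There is no serious obstacle here; the whole argument is essentially an index bookkeeping exercise. The only point to handle carefully is the sign produced by anticommutativity of $e_j,e_k$ with $j\leq p<k$, which is precisely what makes the two block-sums in (\ref{biv u u*}) equal rather than opposite, and thus doubles to give the more economical expression $\frac{1}{2}\sum_{j=1}^p e_j\cdot u(e_j)$.
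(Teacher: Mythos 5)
Your proof is correct, and it reorganizes the paper's argument in a mildly different but cleaner way. Where the paper verifies the bracket formula $[\underline u,\xi]=u(\xi_p)-u^*(\xi_q)$ directly, by a Clifford computation carried out separately for $\xi\in\R^p$ and $\xi\in\R^q$, you instead package the target operator as the skew-symmetric map $\tilde u=\left(\begin{smallmatrix}0&-u^*\\u&0\end{smallmatrix}\right)$ on $\R^n$ and invoke Lemma~\ref{lem1 ap1} for $\tilde u$, which yields both the representation and the bracket formula at once. The remaining step, showing that $\underline{\tilde u}=\frac{1}{4}\sum_{j}e_j\cdot\tilde u(e_j)$ agrees with $\underline u=\frac{1}{2}\sum_{j\leq p}e_j\cdot u(e_j)$, is established by the same index manipulation the paper uses to prove (\ref{biv u u*}), namely transposing $\sum_j e_j\cdot u(e_j)$ via anticommutativity to obtain $\sum_k e_k\cdot(-u^*(e_k))$. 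In short, the computational content overlaps heavily, but your route eliminates the separate case-by-case bracket computation by leaning more fully on Lemma~\ref{lem1 ap1}; the paper's route is more explicit and self-contained at that step. Both are valid.
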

\begin{proof} In view of Lemma \ref{lem1 ap1}, $\underline{u}$ represents the linear map $\xi\mapsto[\underline{u},\xi].$ We compute, for $\xi\in\R^p,$
\begin{eqnarray*}
[\underline{u},\xi]&=&\frac{1}{2}\left(\sum_{j=1}^pe_j\cdot u(e_j)\cdot\xi-\xi\cdot\sum_{j=1}^pe_j\cdot u(e_j)\right)\\
&=&-\frac{1}{2}\sum_{j=1}^p(e_j\cdot\xi+\xi\cdot e_j)\cdot u(e_j)\\
&=&\sum_{j=1}^p\langle \xi,e_j\rangle\ u(e_j)\\
&=&u(\xi),
\end{eqnarray*}
and, for $\xi\in\R^q,$
\begin{eqnarray*}
[\underline{u},\xi]&=&\frac{1}{2}\left(\sum_{j=1}^pe_j\cdot u(e_j)\cdot\xi-\xi\cdot\sum_{j=1}^pe_j\cdot u(e_j)\right)\\
&=&\frac{1}{2}\sum_{j=1}^pe_j\cdot\left(u(e_j)\cdot\xi+\xi\cdot u(e_j)\right)\\
&=&-\sum_{j=1}^pe_j\ \langle u(e_j),\xi\rangle\\
&=&-\sum_{j=1}^pe_j\ \langle e_j,u^*(\xi)\rangle\\
&=&-u^*(\xi).
\end{eqnarray*}
Finally, 
$$\underline{u}=\frac{1}{2}\sum_{j=1}^pe_j\cdot u(e_j)=\frac{1}{4}\left(\sum_{j=1}^pe_j\cdot u(e_j)+\sum_{j=1}^p-u(e_j)\cdot e_j\right)$$
with
\begin{eqnarray*}
\sum_{j=1}^p-u(e_j)\cdot e_j&=&-\sum_{i=p+1}^{p+q}\sum_{j=1}^p\langle u(e_j),e_i\rangle\ e_i\cdot e_j\\
&=&\sum_{i=p+1}^{p+q}e_i\cdot\left(-\sum_{j=1}^p\langle e_j,u^*(e_i)\rangle\ e_j\right)\\
&=&\sum_{i=p+1}^{p+q}e_i\cdot (-u^*(e_i)),
\end{eqnarray*}
which gives (\ref{biv u u*}).
\end{proof}
\section{The metric on $Cl(\mathfrak{g})$}\label{appendix B}
Let $\mathfrak{g}$ be a complex semi-simple Lie algebra of complex dimension $n,$ $B:\mathfrak{g}\times \mathfrak{g}\rightarrow\C$ a multiple of its Killing form and $Cl(\mathfrak{g})$ the associated complex Clifford algebra, as in Section \ref{section clifford spin}. The form $B$ naturally extends to a bilinear and symmetric form $B:Cl(\mathfrak{g})\times Cl(\mathfrak{g})\rightarrow\C$ since $Cl(\mathfrak{g})$ naturally identifies to $\oplus_{p=0}^n\Lambda^p\mathfrak{g}:$ the extension of $B$ is such that, if $e_1,\ldots,e_n$ is a complex basis of $\mathfrak{g}$ formed by unit and orthogonal vectors, i.e. such that $B(e_i,e_j)=\delta_{ij}$, the $p$-vectors $e_{i_1}\wedge \ldots\wedge e_{i_p}$ also form a complex basis of $\Lambda^p\mathfrak{g}$ of unit and orthogonal vectors, and $B(\Lambda^p\mathfrak{g},\Lambda^q\mathfrak{g})=0$ if $p\neq q.$
\begin{lem}\label{lem1 ap2}
The form $B$ is invariant under the left or right action of $Spin(\mathfrak{g})$ by multiplication: for all $\eta,\eta'\in Cl(\mathfrak{g})$ and $g\in Spin(\mathfrak{g}),$
\begin{equation}\label{B Clg invariant}
B(g\cdot\eta,g\cdot\eta')=B(\eta\cdot g,\eta'\cdot g)=B(\eta,\eta').
\end{equation}
It is thus also $Ad$-invariant, and satisfies
\begin{equation}\label{B Cl ad inv}
B([Z,X],Y)=-B(X,[Z,Y])
\end{equation}
for all $X,Y,Z\in\Lambda^2\mathfrak{g}\subset Cl(\mathfrak{g}),$ where the bracket is here the commutator in $Cl(\mathfrak{g}).$
\end{lem}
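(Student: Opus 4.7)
The plan is to reduce everything to the invariance of $B$ under left multiplication by a single unit vector of $\mathfrak{g}$; by the definition of $Spin(\mathfrak{g})$ as products of an even number of unit vectors, this implies left $Spin(\mathfrak{g})$-invariance, and a symmetric argument yields right invariance. The $Ad$-invariance and the infinitesimal identity (\ref{B Cl ad inv}) will then follow formally.

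First I would fix $u\in\mathfrak{g}$ with $B(u,u)=1$ and complete it to a $B$-orthonormal basis $e_1=u,e_2,\ldots,e_n$ of $\mathfrak{g}$. The associated basis $\{e_I:=e_{i_1}\cdots e_{i_k}\}$ of $Cl(\mathfrak{g})$, indexed by subsets $I=\{i_1<\ldots<i_k\}\subset\{1,\ldots,n\}$, is $B$-orthonormal by the very definition of the extension of $B$ recalled in the excerpt. Using $e_1\cdot e_1=-B(e_1,e_1)=-1$ and moving $e_1$ through the factors of $e_I$, one obtains
$$e_1\cdot e_I=\epsilon(I)\,e_{\{1\}\triangle I},\qquad \epsilon(I)=\begin{cases}+1 & \text{if }1\notin I,\\ -1 & \text{if }1\in I,\end{cases}$$
where $\triangle$ is symmetric difference. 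Since $I\mapsto\{1\}\triangle I$ is an involution on the index set and $\epsilon(I)^2=1$, expanding $\eta=\sum_I a_I e_I$ and $\eta'=\sum_J b_J e_J$ gives
$$B(u\eta,u\eta')=\sum_{I,J}a_I b_J\,\epsilon(I)\epsilon(J)\,B(e_{\{1\}\triangle I},e_{\{1\}\triangle J})=\sum_I a_I b_I=B(\eta,\eta').$$

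The analogous computation for right multiplication gives $B(\eta u,\eta' u)=B(\eta,\eta')$, and iterating over generators of $Spin(\mathfrak{g})$ yields (\ref{B Clg invariant}). Combining left and right invariance one then gets
$$B(g\eta g^{-1},g\eta' g^{-1})=B(\eta g^{-1},\eta' g^{-1})=B(\eta,\eta')$$
for every $g\in Spin(\mathfrak{g})$, i.e.\ the $Ad$-invariance of $B$ on $Cl(\mathfrak{g})$.

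For the infinitesimal identity (\ref{B Cl ad inv}), given $Z\in\Lambda^2\mathfrak{g}$, I would use that by (\ref{tilde ad morphism}) the space $\Lambda^2\mathfrak{g}$ is the Lie algebra of $Spin(\mathfrak{g})$ under the commutator of $Cl(\mathfrak{g})$, so $g_t:=\exp(tZ)\in Spin(\mathfrak{g})$ and $\tfrac{d}{dt}\big|_{t=0}(g_t X g_t^{-1})=[Z,X]$. Differentiating the $Ad$-invariance $B(g_t X g_t^{-1},Y)=B(X,Y)$ at $t=0$ yields $B([Z,X],Y)+B(X,[Z,Y])=0$, which is exactly (\ref{B Cl ad inv}). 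The main obstacle is the sign bookkeeping in the core computation, but choosing the first basis vector to be $u$ reduces it to the two elementary cases displayed above, making $\epsilon(I)\epsilon(J)$ automatically $+1$ on the nonvanishing diagonal terms.
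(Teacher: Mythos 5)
Your proof is correct, and it follows a genuinely different route from the paper's for the invariance identity (\ref{B Clg invariant}). The paper first shows that $B(\eta,\eta')$ equals, up to a sign determined by parity, the coefficient of $1_{Cl(\mathfrak{g})}$ in $\tau\eta'\cdot\eta$; invariance is then immediate because $\tau(g)=g^{-1}$ for $g\in Spin(\mathfrak{g})$, because the ``coefficient of $1$'' functional has a trace property (it is unchanged under cyclic permutation of factors), and because multiplication by the even element $g$ preserves the $\Z/2\Z$-grading on which the sign depends. You instead reduce to a single unit vector $u$ (using that $Spin(\mathfrak{g})$ consists of products of even numbers of unit vectors), complete $u$ to a $B$-orthonormal basis, and verify directly in the induced monomial basis that $e_1\cdot e_I=\epsilon(I)\,e_{\{1\}\triangle I}$ with $\epsilon(I)^2=1$ and $I\mapsto\{1\}\triangle I$ a bijection, so that only diagonal terms survive and their coefficients are unchanged; the same scheme (with a different but again unimodular sign $(-1)^{|I|}$) handles right multiplication, and iterating over generators gives (\ref{B Clg invariant}). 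Both arguments are sound; the paper's is shorter and more conceptual, absorbing the sign bookkeeping into the $B=\pm H$ identification, while yours is more elementary and self-contained, using nothing beyond the existence of a $B$-orthonormal basis of $\mathfrak{g}$ extending $u$ (available over $\C$ since $B(u,u)=1\neq 0$). For (\ref{B Cl ad inv}), both you and the paper differentiate the $Ad$-invariance along a curve in $Spin(\mathfrak{g})$ through the identity with velocity $Z$ --- you take $\exp(tZ)$, the paper an arbitrary such curve --- so this part coincides in substance.
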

\begin{proof}
Let us note that $B(\eta,\eta')$ is, up to sign, the coefficient of $1_{Cl(\mathfrak{g})}$ in the product $\tau\eta'\cdot \eta\in Cl(\mathfrak{g}):$ indeed, if $H(\eta,\eta')$ denotes this coefficient, it defines a bilinear and symmetric map $H:Cl(\mathfrak{g})\times Cl(\mathfrak{g})\rightarrow\C$ such that  
$$H(e_{i_1}\wedge \ldots\wedge e_{i_p},e_{j_1}\wedge \ldots\wedge e_{j_q})=\epsilon$$
with $\epsilon=(-1)^p$ if $p=q$ and $i_1,\ldots,i_p=j_1,\ldots,j_p$, and $\epsilon=0$ otherwise; thus $B=H$ on $Cl^0(\mathfrak{g})$ and $B=-H$ on $Cl^1(\mathfrak{g}).$ Since $H$ clearly satisfies (\ref{B Clg invariant}), so does $B.$ For the last claim, if $g(t)\in Spin(\mathfrak{g})$ is a curve such that $g(0)=1_{Cl(\mathfrak{g})}$ and $g'(0)=Z,$ (\ref{B Clg invariant}) implies that
$$B(g(t)\cdot X\cdot g(t)^{-1},g(t)\cdot Y\cdot g(t)^{-1})=B(X,Y);$$
the result follows by derivation at $t=0.$
\end{proof}
\begin{lem}\label{lem2 ap2}
The form $B$ on $Cl(\mathfrak{g})$ is such that
$$B(\widetilde{ad}(X),\widetilde{ad}(Y))=-\frac{1}{8\lambda}B(X,Y)$$
for all $X,Y\in\mathfrak{g},$ where $\widetilde{ad}(X)$ and $\widetilde{ad}(Y)$ represent the endomorphisms $ad(X)$ and $ad(Y)$ in $Cl(\mathfrak{g}).$ 
\end{lem}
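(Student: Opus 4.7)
The plan is to compute $B(\widetilde{ad}(X),\widetilde{ad}(Y))$ in a $B$-orthonormal basis of $\mathfrak{g}$ and then recognize a trace. Fix a complex basis $(e_1,\ldots,e_n)$ of $\mathfrak{g}$ with $B(e_i,e_j)=\delta_{ij}$, so that the bivectors $e_i\cdot e_j$, $i<j$, form an orthonormal family in $\Lambda^2\mathfrak{g}\subset Cl(\mathfrak{g})$ for the extension of $B$ (by the very definition of this extension recalled just before the lemma).

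By Lemma \ref{lem1 ap1} applied to the skew-symmetric operator $ad(X)$, we have
$$\widetilde{ad}(X)=\tfrac{1}{4}\sum_{j}e_j\cdot[X,e_j]=\tfrac{1}{2}\sum_{j<k}B([X,e_j],e_k)\,e_j\cdot e_k,$$
where the second equality uses that $[X,e_j]=\sum_k B([X,e_j],e_k)e_k$ together with the skew-symmetry $B([X,e_j],e_k)=-B([X,e_k],e_j)$ coming from (\ref{B ad invariant}). An identical expression holds for $\widetilde{ad}(Y)$.

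Using the orthonormality of $(e_j\cdot e_k)_{j<k}$ with respect to $B$ on $\Lambda^2\mathfrak{g}$, a direct computation gives
$$B(\widetilde{ad}(X),\widetilde{ad}(Y))=\tfrac{1}{4}\sum_{j<k}B([X,e_j],e_k)\,B([Y,e_j],e_k)=\tfrac{1}{8}\sum_{j,k}B([X,e_j],e_k)\,B([Y,e_j],e_k).$$
For each fixed $j$, the inner sum over $k$ is precisely the Parseval expansion of $B([X,e_j],[Y,e_j])$ in the orthonormal basis $(e_k)$, so
$$B(\widetilde{ad}(X),\widetilde{ad}(Y))=\tfrac{1}{8}\sum_{j}B([X,e_j],[Y,e_j]).$$

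To finish, apply the $ad$-invariance (\ref{B ad invariant}) of $B$ with respect to $X$: for each $j$,
$$B([X,e_j],[Y,e_j])=-B(e_j,[X,[Y,e_j]])=-B(e_j,ad(X)\circ ad(Y)(e_j)).$$
Summing over $j$ yields $-\operatorname{tr}(ad(X)\circ ad(Y))$, since $(e_j)$ is $B$-orthonormal. By the definition of $B$ as the multiple $\lambda\operatorname{tr}(ad(\cdot)\circ ad(\cdot))$ of the Killing form, this equals $-\tfrac{1}{\lambda}B(X,Y)$, and substitution gives the announced formula $B(\widetilde{ad}(X),\widetilde{ad}(Y))=-\tfrac{1}{8\lambda}B(X,Y)$. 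The only point requiring a little care is the combinatorial reduction from the unordered sum $\sum_{j,k}$ to the ordered sum $\sum_{j<k}$ via the skew-symmetry of the coefficients $B([X,e_j],e_k)$; everything else is a direct consequence of Lemma \ref{lem1 ap1} and the invariance identities for $B$.
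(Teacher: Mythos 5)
Your proof is correct and follows essentially the same route as the paper's: both expand $\widetilde{ad}(X)$ and $\widetilde{ad}(Y)$ in the orthonormal bivector basis $e_j\cdot e_k$ and then identify the resulting sum with a multiple of $\operatorname{tr}(ad(X)\circ ad(Y))$. The only stylistic difference is that you recover the coefficients directly from Lemma \ref{lem1 ap1} and use the $ad$-invariance (\ref{B ad invariant}) together with a Parseval-type expansion of $B([X,e_j],[Y,e_j])$, whereas the paper introduces abstract coefficients $\gamma_{ij}$ and recovers $ad(X)(e_k)$ by a short Clifford-commutator calculation; both versions are equally elementary.
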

\begin{proof}
Let us write
$$\widetilde{ad}(X)=\sum_{i<j}\gamma_{ij}(X)\ e_i\cdot e_j\hspace{.5cm}\mbox{and}\hspace{.5cm}\widetilde{ad}(Y)=\sum_{i<j}\gamma_{ij}(Y)\ e_i\cdot e_j.$$
By definition of $B$ on $Cl(\mathfrak{g})$ we have
$$B(\widetilde{ad}(X),\widetilde{ad}(Y))=\sum_{i<j}\gamma_{ij}(X)\gamma_{ij}(Y).$$
On the other hand, a direct computation yields
$$ad(X)(e_k)=\widetilde{ad}(X)\cdot e_k-e_k\cdot \widetilde{ad}(X)=2\sum_i\gamma_{ik}(X)e_i$$
where the ${\gamma_{ij}}'$s are completed such that $\gamma_{ij}=-\gamma_{ji}.$ Thus
$$ad(Y)(ad(X)(e_k))=4\sum_{i,j}\gamma_{ik}(X)\gamma_{ji}(Y)e_j$$
and
$$B(X,Y)=\lambda\ \mbox{tr}(ad(Y)\circ ad(X))=4\lambda\sum_{i,k}\gamma_{ik}(X)\gamma_{ki}(Y)=-8\lambda\sum_{i<k}\gamma_{ik}(X)\gamma_{ik}(Y);$$
the result follows.
\end{proof}
\noindent\textbf{Acknowledgments:} The author is very indebted to the referees for many comments which helped to improve considerably the writing of the paper. The author was supported by the project PAPIIT-UNAM IA106218.


\begin{thebibliography}{}
\bibitem{AA} R. Aiyama, K. Akutagawa, \emph{Kenmotsu-Bryant type representation formulas for constant mean curvature surfaces in $\mathbb{H}^3(-c^2)$ and $\mathbb{S}^{3}_1(c^2)$}, Ann. Glob. Anal. Geom. \textbf{17} (1998) 49-75.
\bibitem{AN} K. Akutagawa, S. Nishikawa, \emph{The Gauss map and spacelike surfaces with prescribed mean curvature in Minkowski 3-space}, Tohoku Math. J. \textbf{42} (1990) 67-82.
\bibitem{Bar1} C. B\"ar, \emph{The Dirac operator on homogeneous spaces and its spectrum on 3-dimensional lens spaces}, Arch. Math. \textbf{59} (1992) 65-79.
\bibitem{Bar2} C. B\"ar, \emph{Extrinsic bounds for the eigenvalues of the Dirac operator}, Ann. Glob. Anal. Geom. \textbf{16}  (1998) 573-596.
\bibitem{Bay} P. Bayard, \emph{On the spinorial representation of spacelike surfaces into 4-dimensional Minkowski space}, J. Geom. Phys. \textbf{74} (2013) 289-313.
\bibitem{BLR1} P. Bayard, M.-A. Lawn, J. Roth, \emph{Spinorial representation of surfaces in four-dimensional space forms}, Ann. Glob. Anal. Geom. \textbf{44:4} (2013) 433-453.
\bibitem{BLR2} P. Bayard, M.-A. Lawn, J. Roth, \emph{Spinorial representation of submanifolds in Riemannian space forms}, Pacific J. of Math. \textbf{291:1}  (2017) 51-80.
\bibitem{BP} P. Bayard, V. Patty, \emph{Spinor representation of Lorentzian surfaces in $\R^{2,2}$}, J. Geom. Phys. \textbf{95} (2015) 74-75.
\bibitem{BRZ} P. Bayard, J. Roth, B. Zavala Jim\'enez, \emph{Spinorial representation of submanifolds in metric Lie groups},  J. Geom. Phys. \textbf{114} (2017) 348-374.
\bibitem{BT} D.A. Berdinskii, I.A. Taimanov, \emph{Surfaces in three-dimensional Lie groups}, Sibirsk Mat. Zh. \textbf{46:6} (2005) 1248-1264.
\bibitem{Bi} L. Bianchi, \emph{Sulle superficie a curvatura nulla in geometria ellittica}, Ann. Mat. Pura Appl. \textbf{24} (1896) 93-129.
\bibitem{BHMMM} J.P. Bourguignon, O. Hijazi, J.-L. Milhorat, A. Moroianu, S. Moroianu, \emph{A Spinorial Approach to Riemannian and Conformal Geometry}, EMS Monographs in Mathematics (2015).
\bibitem{Br}  R. Bryant, \emph{Surfaces of mean curvature one in hyperbolic space}, Th\'eorie des vari\'et\'es minimales et applications (Palaiseau, 1983-1984),
Ast\'erisque \textbf{154-155} (1987) 321-347.
\bibitem{Burstall} F. Burstall, J. Rawnsley, \emph{Twistor Theory for Riemannian Symmetric Spaces}, Lecture Notes in Mathematics \textbf{1424} (1990).
\bibitem{dCD} M. do Carmo, M. Dajczer, \emph{Local isometric immersions of $\R^2$ into $\R^4$}, J. reine angew. Math. \textbf{442} (1993) 205-219.
\bibitem{DT} M. Dajczer, R. Tojeiro, \emph{On flat surfaces with flat normal bundle in space forms}, Houston Math. J. \textbf{21} (1995) 319-338.
\bibitem{H} S. Helgason, \emph{Differential Geometry, Lie groups, and Symmetric Spaces}, Graduate Studies in Mathematics \textbf{34}, AMS (1978, reprint 2012).
\bibitem{Fr} T. Friedrich, \emph{On the spinor representation of surfaces in Euclidean $3$-space}, J. Geom. Phys. \textbf{28} (1998) 143-157.
\bibitem{GMM1} J. G\'alvez, A. Mart\'{\i}nez, F. Mil\'an, \emph{Complete constant gaussian curvature surfaces in the Minkowski space and harmonic diffeomorphisms onto the hyperbolic plane}, Tohoku Math. J. \textbf{55} (2003) 467-476.
\bibitem{GMM2} J.A. G\'alvez, A. Mart\'{\i}nez, F. Mil\'an, \emph{Flat surfaces in the hyperbolic 3-space}, Math. Ann. \textbf{316} (2000) 419-435.
\bibitem{GMM3} J. G\'alvez, A. Mart\'{\i}nez, F. Mil\'an, \emph{Flat surfaces in $L^4$}, Ann. Glob. Anal. Geom. \textbf{20:3} (2001) 243-251.
\bibitem{GM1} J. G\'alvez, P. Mira, \emph{Isometric immersions of $\R^2$ into $\R^4$ and perturbation of Hopf tori}, Math. Z. \textbf{266:1} (2010) 207-227.
\bibitem{GM2} J. G\'alvez, P. Mira, \emph{An alternative proof of the Bryant representation}, Differential geometry and its applications, Matfyzpress, Prague (2005) 151-155. 
\bibitem{HO} D. Hoffman, R. Osserman, \emph{The Gauss map of surfaces in $\R^3$ and $\R^4$}, Proc. London Math. Soc. \textbf{50:1} (1985) 27-56.
\bibitem{Ke} K. Kenmotsu, \emph{Weierstrass formula for surfaces of prescribed mean curvature}, Math. Ann. \textbf{245} (1979) 89-99.
\bibitem{Kob} O. Kobayashi, \emph{Maximal surfaces in the 3-dimensional Minkowski space $L^3$}, Tokyo J. Math. \textbf{6:2} (1983) 297-309.
\bibitem{KN2} S. Kobayashi, K. Nomizu, \emph{Foundations of Differential Geometry, Volume II}, Wiley Classics Library (1996).
\bibitem{KTUY} M. Kokubu, M. Takahashi, M. Umehara, K. Yamada, \emph{An analogue of minimal surface theory in $SL_n(\C)/SU(n)$}, Trans. A.M.S. \textbf{354:4} (2001) 
1299-1325.
\bibitem{Ko1} B.G. Konopelchenko, \emph{Induced surfaces and their integrable dynamics}, Stud. Appl. Math. \textbf{96:1} (1996) 9-51.
\bibitem{Ko2} B.G. Konopelchenko, \emph{Weierstrass representations for surfaces in 4D spaces and their integrable deformations via DS hierarchy}, Ann. Global Anal. Geom. \textbf{18:1} (2000) 61-74.
\bibitem{La} M.-A. Lawn, \emph{A spinorial representation for Lorentzian surfaces in $\R^{2,1}$}, J. Geom. Phys. \textbf{58:6} (2008) 683-700.
\bibitem{LR} M.-A. Lawn, J. Roth, \emph{Spinorial characterization of surfaces in pseudo-Riemannian space forms}, Math. Phys. Anal. and Geom. \textbf{14:3} (2011) 185-195.
\bibitem{LH1} J.H.S. de Lira, J.A. Hinojosa, \emph{The Gauss map of minimal surfaces in Berger spheres}, Ann. Glob. Anal. Geom. \textbf{37} (2010) 143-162.
\bibitem{LH2} J.H.S. de Lira, J.A. Hinojosa, \emph{The Gauss map of minimal surfaces in the Anti-de Sitter space}, J. Geom. Phys. \textbf{61} (2011) 610-623.
\bibitem{Ma} M.A. Magid, \emph{Timelike surfaces in Lorentz 3-space with prescribed mean curvature and Gauss map}, Hokkaido Math. J. \textbf{20} (1991) 447-464.
\bibitem{MP} W. Meeks, J. Perez, \textit{Constant mean curvature surfaces in metric Lie groups}, in "Geometric Analysis: Partial Differential Equations and Surfaces", Contemp. Math. \textbf{570} (2012) 25-110.
\bibitem{Mo} B. Morel, \emph{Surfaces in $\mathbb{S}^3$ and $\mathbb{H}^3$ via spinors}, Actes du s\'eminaire de th\'eorie spectrale, Institut Fourier, Grenoble, \textbf{23} (2005) 9-22.
\bibitem{MN1} E. Musso, L. Nicolodi, \emph{The spinor representation of CMC 1 surfaces in hyperbolic space}, Note Mat. \textbf{28} (2009) 317-339.
\bibitem{NR} R. Nakad, J. Roth, \emph{Complex and Lagrangian surfaces of the complex projective plane via K\"ahlerian Killing $Spin^c$ spinors}, J. Geom. Phys. \textbf{116} (2017) 316-329.
\bibitem{Pa} V. Patty, \emph{A generalized Weierstrass representation of Lorentzian surfaces in $\R^{2,2}$ and applications}, Int. J. Geometric Methods in Modern Physics \textbf{13:6} (2016) 26 pages.
\bibitem{Ro} J. Roth, \emph{Spinorial characterizations of surfaces into 3-homogeneous manifolds}, J. Geom. Phys. \textbf{60} (2010) 1045-1061.
\bibitem{Ta1} I.A. Taimanov, \emph{Modified Novikov-Veselov equation and differential geometry of surfaces}, Amer. Math. Soc. Transl. Ser. 2 \textbf{179} (1997) 133-151.
\bibitem{Ta2} I.A. Taimanov, \emph{Surfaces of revolution in terms of solitons}, Ann. Glob. Anal. Geom. \textbf{15:5} (1997) 419-435.
\bibitem{Va} V.V. Varlamov, \emph{Spinor representations of surfaces in 4-dimensional pseudo-Riemannian manifolds} (2000) arXiv:math/0004056.
\end{thebibliography}
\end{document}